\definecolor{darkgreen}{rgb}{0.0, 0.7, 0.0}
\definecolor{purple}{rgb}{0.5, 0.0, 0.5}
\definecolor{red}{rgb}{0.8, 0.2, 0.0}
\newtheorem{thm}{Theorem}[section]
\newtheorem{lemma}[thm]{Lemma}
\newtheorem{prop}[thm]{Proposition}
\newtheorem{cor}[thm]{Corollary}
\numberwithin{equation}{section}
\theoremstyle{definition}
\newtheorem{defi}[thm]{Definition}
\newtheorem{bthm}{Theorem}
\theoremstyle{remark}
\newtheorem{remark}[thm]{Remark}
\newcommand{\Z}{\mathbb{Z}}
\newcommand{\Q}{\mathbb{Q}}
\newcommand{\R}{\mathbb{R}}
\newcommand{\NN}{\mathbb{N}}
\DeclareMathOperator{\Supp}{Supp}
\def \PP{\mathbb{P}}
\def \ZZ{\mathbb{Z}}
\def\I{\mathcal I}
\def \E{\mathcal E}
\def\O{\mathcal O}
\def\M0{\mathcal M^0}
\DeclareMathOperator{\Sing}{{Sing}}
\DeclareMathOperator{\Int}{{Int}}
\def\B{\mathbf{B}}
\newcommand{\rk}{\operatorname{rk}}
\newcommand{\Eff}{\operatorname{Eff}}
\newcommand{\Nef}{\operatorname{Nef}}
\newcommand{\Amp}{\operatorname{Amp}}
\begin{document}

\title[Augmented and restricted base loci of cycles]{Augmented and restricted base loci of cycles}

\author[A.F. Lopez]{Angelo Felice Lopez}

\thanks{Research partially supported by INdAM (GNSAGA) and by the MIUR national projects ``Geometria delle variet\`a algebriche" PRIN 2010-2011 and ``Spazi di moduli e applicazioni" FIRB 2012.}

\address{\hskip -.43cm Dipartimento di Matematica e Fisica, Universit\`a di Roma Tre, Largo San Leonardo Murialdo 1, 00146 Roma, Italy. e-mail {\tt lopez@mat.uniroma3.it}}

\begin{abstract}
We introduce augmented and restricted base loci of cycles and we study the positivity properties naturally defined by these base loci.
\end{abstract}

\maketitle

\section{Introduction}
\label{intro}

One of the most important facts in algebraic geometry is that the geometry of a variety is reflected in the geometry of its subvarieties. There is, however, a big difference between codimension one subvarieties and higher codimensional ones. In the first case several tools are at hand, such as linear systems, ample divisors, vanishing theorems and so on. On the other hand, no similar tool is available in the study of higher codimensional cycles and this certainly makes the theory harder. Well-known famous problems are still open in that case, such as the Hodge conjecture or Grothendieck standard conjectures.

When dealing with algebraic cycles one can define effective, pseudoeffective and big cycles, but perhaps a good notion of positive cycles still lacks \cite[Problem 6.13]{delv}. A few years ago, Ottem \cite{o1, o2} defined the notion of ample subschemes and proved several beautiful results about them. On the other hand, the properties of the cone generated by them remain mysterious,  for example it is not known whether one can "move" multiples of ample subschemes or if they are big (except when $k=1$ \cite{o2} or $n-1$ \cite{o1}). Perhaps another difficulty is that higher codimensional nef cycles may not be pseudoeffective \cite{delv, o3}. More recently, in a series of papers, Fulger and Lehmann \cite{fl1, fl2, fl3, fl4} laid out a general theory of cones of cycles, by introducing and studying several notions of positivity of cycles.

In the present paper we take a different approach. We observe that, in the case of Cartier divisors, there are well-established notions of base loci, such as the stable, augmented or restricted base locus \cite{elmnp1, elmnp2} and that positivity properties of divisors are precisely reflected in their base loci. For example a divisor $D$ is ample if and only if $\B_+(D)=\emptyset$, it is nef if and only if $\B_-(D)=\emptyset$, it is big if and only if $\B_+(D)$ is not the whole variety. Our goal in the present paper is to take the same path in the case of cycles.  

The starting observation is that $\B_-(D)$ and $\B_+(D)$ can also be described using the numerical base locus of perturbations of $D$, that is the intersection of the supports of the effective divisors numerically equivalent to perturbations of $D$ (see Lemma \ref{divis}). On the other hand, this process can now be carried over to cycles.

Let $X$ be a projective variety of dimension $n$ and let $k$ be an integer such that $1 \le k \leq n-1$. We denote by $Z_k(X)_{\R}$ the vector space of real $k$-cycles, $N_k(X)$ the vector space of real $k$-cycles modulo numerical equivalence and by $[Z]$ the numerical equivalence class of a real $k$-cycle $Z$ on $X$ (for definitions see section 2).

\begin{defi}
\label{luoghibase}
Let $\alpha \in N_k(X)$. Set
\[ |\alpha|_{\rm num} = \{ e \in Z_k(X)_{\R} : e \ \mbox{is effective and} \ [e] = \alpha\}. \]
The {\it numerical stable base locus} of $\alpha$ is
\[ \B_{\rm num}(\alpha)= \begin{cases} \ \ \ \ \ X & {\rm if} \ |\alpha|_{\rm num} = \emptyset \\ \bigcap\limits_{\ \ \ e \in |\alpha|_{\rm num}} {\rm Supp}(e) & {\rm if} \ |\alpha|_{\rm num} \neq \emptyset \end{cases} . \]
The {\it augmented base locus} of $\alpha$ is 
\[ \B_+(\alpha)= \bigcap\limits_{A_1, \ldots, A_{n-k}} \B_{\rm num}(\alpha - [A_1 \cdots A_{n-k}]) \]
and the {\it restricted base locus} of $\alpha$ is 
\[ \B_-(\alpha)= \bigcup\limits_{A_1, \ldots, A_{n-k}} \B_{\rm num}(\alpha + [A_1 \cdots A_{n-k}]) \]
where $A_1, \ldots, A_{n-k}$ run among all ample $\R$-Cartier $\R$-divisors on $X$. 
\end{defi}

As a matter of fact, rather than perturbing with complete intersection of ample divisors, one can use push-forwards $f_* [A_1 \cdots A_{n-k}]$ under finite flat morphisms $f : Y \to X$, with $A_1, \ldots, A_{n-k}$ ample on $Y$. 
As we will see in section \ref{pf}, if we let $PCI_k(X)$ be the convex cone generated by those classes, one has (see Remark \ref{pert}) that
\[ \B_+(\alpha)= \bigcap\limits_{\gamma \in PCI_k(X)} \B_{\rm num}(\alpha - \gamma) \ \hbox{and} \ \B_-(\alpha)= \bigcup\limits_{\gamma \in PCI_k(X)} \B_{\rm num}(\alpha + \gamma). \]

As mentioned above, these loci do coincide, in the case of Cartier divisors, with their counterparts. On the other hand, a basic question arises: What are the positivity properties of $\alpha \in N_k(X)$ when $\B_+(\alpha)$ or  $\B_-(\alpha)$ are empty or properly contained in $X$?

To answer these questions we introduce the following positivity property of cycles. As we will see, it can also be considered a partial answer to \cite[Problem 6.13]{delv}. See also Lemma \ref{coni2} and  Remark \ref{comp} for a comparison with other positivity notions.
  
\begin{defi}
Let $X$ be a projective variety of dimension $n$ and let $k$ be an integer such that $1 \le k \leq n-1$. Set
\[ P_k(X) = \{\alpha \in N_k(X): \exists \ A_1, \ldots, A_{n-k} \ \mbox{ample $\R$-Cartier $\R$-divisors on $X$ and} \]
\[ \ \  \exists \ \beta \in N_k(X) \ \mbox{with} \ \B_{\rm num} (\beta) = \emptyset \ \mbox{and} \ \alpha = [A_1 \cdots A_{n-k}] + \beta \}. \]
\end{defi}
(see also Proposition \ref{coni3} for the analogous formulation in terms of $PCI_k(X)$.)

\vskip .3cm

Our first result is the ensuing

\begin{bthm}
\label{aperto}
Let $X$ be a projective variety of dimension $n$ and let $k$ be an integer such that $1 \le k \leq n-1$.
Then $P_k(X)$ is a convex cone in $N_k(X)$. Moreover suppose that $X$ is smooth. Then $P_k(X)$ is open and full-dimensional.
\end{bthm}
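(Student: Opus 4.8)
The plan is to treat the three assertions in order: convexity, openness, and full-dimensionality, with the bulk of the work going into the convexity statement.

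For \textbf{convexity}, I would first observe that $P_k(X)$ is clearly stable under multiplication by positive reals, since if $\alpha = [A_1 \cdots A_{n-k}] + \beta$ with $\B_{\rm num}(\beta) = \emptyset$, then for $t > 0$ one has $t\alpha = [(tA_1) \cdot A_2 \cdots A_{n-k}] + t\beta$, and $\B_{\rm num}(t\beta) = \emptyset$ because an effective cycle representing $\beta$ with empty base locus can be scaled (or rather: $|\beta|_{\rm num} \neq \emptyset$ with empty intersection of supports forces the same for $t\beta$, using finitely many representatives). So it remains to show $P_k(X)$ is closed under addition. Given $\alpha = [A_1 \cdots A_{n-k}] + \beta$ and $\alpha' = [A'_1 \cdots A'_{n-k}] + \beta'$ in $P_k(X)$, the key point is that $[A_1 \cdots A_{n-k}] + [A'_1 \cdots A'_{n-k}]$ can itself be written as $[B_1 \cdots B_{n-k}] + \gamma$ with the $B_i$ ample and $\B_{\rm num}(\gamma) = \emptyset$; granting this, $\alpha + \alpha' = [B_1 \cdots B_{n-k}] + (\gamma + \beta + \beta')$, and a sum of classes with empty numerical stable base locus again has empty numerical stable base locus (intersect finitely many supports that are jointly base-point free, using that $\B_{\rm num}(\gamma_1 + \gamma_2) \subseteq \B_{\rm num}(\gamma_1) \cap \B_{\rm num}(\gamma_2)$ by adding representatives). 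The substantive reduction is thus: a sum of two complete-intersection classes of ample divisors lies in the interior sense of $P_k(X)$, i.e.\ equals an ample complete intersection plus a base-point-free class. For this I would use that ampleness is open: write $A_i = \epsilon A_i + (1-\epsilon)A_i$ and $A'_i$ similarly, expand the products, and peel off one genuinely ample complete intersection term $[\,\epsilon^{n-k} A_1 \cdots A_{n-k}]$ (rescaled), absorbing all cross terms — which are complete intersections of ample or pseudoeffective $\R$-divisors hence effective with, after small perturbation, empty numerical stable base locus — into the $\beta$-part. The care needed is that a complete intersection of \emph{ample} divisors can be represented by effective cycles moving freely, so its numerical stable base locus is empty; this is where one invokes that a general such complete intersection avoids any given point, proved by a Bertini-type argument over the ground field (or by the very-ampleness of large multiples and taking a general member).

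For \textbf{openness} (now assuming $X$ smooth), the idea is that $\alpha = [A_1 \cdots A_{n-k}] + \beta$ with $\B_{\rm num}(\beta)=\emptyset$ has a neighborhood inside $P_k(X)$: perturb $\alpha$ to $\alpha + \delta$ for $\delta$ in a small ball of $N_k(X)$. Split $[A_1 \cdots A_{n-k}]$ as $[\tfrac12 A_1 \cdot A_2 \cdots A_{n-k}] + [\tfrac12 A_1 \cdot A_2 \cdots A_{n-k}]$; the first summand is still an ample complete intersection, and the second, being a complete intersection of ample divisors, can absorb a small perturbation: for $\delta$ small, $[\tfrac12 A_1 \cdot A_2 \cdots A_{n-k}] + \delta$ is still represented by an effective cycle with empty numerical stable base locus, because on a smooth variety such a complete-intersection class is interior to the cone of classes with that property — concretely, the $\R$-divisor classes $\tfrac12 A_1, \ldots, A_{n-k}$ can each be independently wiggled (ampleness is open) so that small changes in the numerical class of the product are realized by genuinely base-point-free effective cycles. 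Here smoothness is used to have a clean intersection theory $N^1 \times N_{k+1} \to N_k$ and the fact that products of ample classes span enough of $N_k(X)$ locally near $[A_1 \cdots A_{n-k}]$. Then $\alpha + \delta = [\tfrac12 A_1 \cdot A_2 \cdots A_{n-k}] + (\text{b.p.f. class})$, so $\alpha + \delta \in P_k(X)$.

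For \textbf{full-dimensionality}, it suffices to exhibit $n-k$ ample divisors whose complete-intersection class, together with its perturbations by a basis of $N_k(X)$ scaled small, all lie in $P_k(X)$ — but this is immediate once openness is known together with nonemptiness: $P_k(X)$ is nonempty (it contains $[A_1 \cdots A_{n-k}]$ for any amples $A_i$, taking $\beta = 0$, since that complete intersection has empty numerical stable base locus by the moving argument above), and a nonempty open subset of a finite-dimensional real vector space is automatically full-dimensional. So full-dimensionality follows formally from the first two parts.

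The \textbf{main obstacle} I anticipate is the recurring technical lemma underlying all three parts: that a complete intersection of \emph{ample} $\R$-Cartier $\R$-divisors on $X$ (resp.\ on smooth $X$) has empty numerical stable base locus, \emph{and} that this persists under small perturbation of the numerical class — equivalently, that $[A_1 \cdots A_{n-k}]$ is an interior point of the cone of classes representable by a base-point-free effective cycle. Proving the unperturbed statement requires a Bertini/moving-lemma argument valid over an arbitrary (possibly not algebraically closed, possibly positive-characteristic) base field and for $\R$-divisors (via approximation by $\Q$-divisors and clearing denominators); proving the perturbed statement requires knowing that products of ample $\R$-divisor classes fill out a neighborhood in $N_k(X)$ of $[A_1 \cdots A_{n-k}]$, which on smooth $X$ should follow from openness of the ample cone in $N^1(X)_\R$ combined with bilinearity of the intersection pairing, but needs to be checked carefully — this is the one place where a genuine geometric input, rather than formal manipulation, is needed.
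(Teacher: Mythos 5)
Your treatment of convexity is essentially sound: since $\B_{\rm num}$ of a complete intersection of amples is already empty (Proposition \ref{tutte}\eqref{unione-ii}), you can absorb $[A'_1\cdots A'_{n-k}]$ directly into the ``$\beta$''--part, so the decomposition $P_k(X)=CI_k(X)+NSA_k(X)$ and the subadditivity of $\B_{\rm num}$ do the work. (Minor slip: you wrote $\B_{\rm num}(\gamma_1+\gamma_2)\subseteq\B_{\rm num}(\gamma_1)\cap\B_{\rm num}(\gamma_2)$; the correct inclusion, and the one the paper proves, has a \emph{union} on the right, which is what your ``add two representatives avoiding $x$'' argument actually gives. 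Your use of it is in a situation where both sets are empty, so no harm done there.)

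The openness argument, however, has a genuine gap at exactly the place you flagged as ``needs to be checked carefully.'' You assert that the classes of complete intersections of ample $\R$-divisors fill out a neighborhood of $[A_1\cdots A_{n-k}]$ in $N_k(X)$ because the ample cone is open in $N^1(X)$. This fails in general: if, say, $X$ has Picard rank one, then $CI_k(X)$ is a single ray inside $N_k(X)$, which can have dimension $>1$, so wiggling the $A_i$ gives you nothing off that ray, and the decomposition $\alpha+\delta=[\tfrac12 A_1\cdot A_2\cdots A_{n-k}]+(\text{b.p.f.\ class})$ has no reason to exist for $\delta$ transverse to the ray. The paper explicitly remarks on precisely this point (``a variety may have Picard rank one and therefore \dots the openness of $P_k(X)$ is not at all accounted for by complete intersection of ample divisors''). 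Its fix is to enlarge $CI_k(X)$ to the cone $PCI_k(X)$ of pushforwards $f_*[A_1\cdots A_{n-k}]$ under finite flat covers $f:Y\to X$; a theorem of Nguyen-Bac Dang (cited as [Ng, Cor.\ 2.5.2]) shows these classes span $N_k(X)$, and Lemma \ref{pfo2}(ii) gives the needed ``interior'' property of $PCI_k(X)$, after which $P_k(X)=PCI_k(X)+NSA_k(X)=\Int(NSA_k(X))$ is open by a formal convexity argument. (The paper also gives a second proof for $k<n/2+1$ using a Chern-class basis and smooth subvarieties, again precisely to get around the insufficiency of complete intersections.) Consequently your derivation of full-dimensionality from openness inherits the same gap, although it could be salvaged independently, as the paper does, by exhibiting a basis of $N_k(X)$ consisting of Chern classes $[c_{n-k}(\E_j(mA))]$ of suitably twisted vector bundles, each lying in $P_k(X)$ by Lemma \ref{vb}(ii).
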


We observe that a variety may have Picard rank one and therefore, in some sense, the opennes of $P_k(X)$ is not at all accounted for by complete intersection of ample divisors.

Our answer to the above questions is given in the following two results. 

As for the augmented base locus we have

\begin{bthm}
\label{b+}
Let $X$ be a projective variety of dimension $n$, let $k$ be an integer such that $1 \le k \leq n-1$ and let $\alpha \in N_k(X)$.
Then 
\begin{itemize}
\item[(i)] $\B_+(\alpha) \subsetneq X$ if and only if $\alpha$ is big; 
\item[(ii)] $\B_+(\alpha) = \emptyset$ if and only if $\alpha \in P_k(X)$. 
\end{itemize}
\end{bthm}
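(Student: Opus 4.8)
The plan is to prove each of the two equivalences by establishing its two implications separately, relying on two ingredients. The first is the elementary convex geometry of the finite-dimensional space $N_k(X)$: the effective cone $\Eff_k(X)$ is a full-dimensional convex cone, $\overline{\Eff}_k(X)$ is its closure, and therefore $\Int(\overline{\Eff}_k(X))=\Int(\Eff_k(X))\subseteq\Eff_k(X)$; in particular a class is big precisely when it lies in $\Int(\Eff_k(X))$, so a big class is represented by an effective $\R$-cycle and remains so after a sufficiently small perturbation. The second is the package of facts about $PCI_k(X)$ established in Section \ref{pf}: a complete intersection of ample divisors is a big class, and a complete intersection of very ample divisors — as well as its push-forward under a finite flat morphism — can be chosen to avoid any prescribed finite subset of $X$ (Bertini), so every class in $PCI_k(X)$ admits an effective representative missing a given point.

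For part (i): if $\B_+(\alpha)\subsetneq X$ then $\B_{\rm num}(\alpha-[A_1\cdots A_{n-k}])\neq X$ for some ample $\R$-divisors $A_i$, and since the support of an effective $k$-cycle has dimension at most $k<n$ this is possible only when $|\alpha-[A_1\cdots A_{n-k}]|_{\rm num}\neq\emptyset$; hence $\alpha=[A_1\cdots A_{n-k}]+[e]$ with $e$ effective, and, adding the pseudoeffective class $[e]$ to the big class $[A_1\cdots A_{n-k}]$, the class $\alpha$ is big. Conversely, if $\alpha$ is big, pick ample $\R$-divisors $A_i$ with $[A_1\cdots A_{n-k}]$ small enough (for instance $A_i=\varepsilon H$ with $H$ ample and $\varepsilon$ small) that $\alpha-[A_1\cdots A_{n-k}]$ still lies in $\Int(\Eff_k(X))$; then it is represented by an effective cycle $e$, so by Definition \ref{luoghibase} one has $\B_+(\alpha)\subseteq\B_{\rm num}(\alpha-[A_1\cdots A_{n-k}])\subseteq\Supp(e)\subsetneq X$.

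For part (ii) the key is a monotonicity statement: if $\gamma,\gamma'\in PCI_k(X)$ and $\gamma'-\gamma\in PCI_k(X)$, then $\B_{\rm num}(\alpha-\gamma)\subseteq\B_{\rm num}(\alpha-\gamma')$. When $|\alpha-\gamma'|_{\rm num}=\emptyset$ this is clear; otherwise, given $x\in\B_{\rm num}(\alpha-\gamma)$ and an effective representative $e$ of $\alpha-\gamma'$, choose an effective representative $c$ of $\gamma'-\gamma$ with $x\notin\Supp(c)$; then $e+c$ represents $\alpha-\gamma$, so $x\in\Supp(e+c)=\Supp(e)\cup\Supp(c)$, forcing $x\in\Supp(e)$, and as $e$ was arbitrary, $x\in\B_{\rm num}(\alpha-\gamma')$. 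Consequently the family $\{\B_{\rm num}(\alpha-[A_1\cdots A_{n-k}])\}$, indexed by complete intersections of ample $\R$-divisors, is downward directed (given $[A_1\cdots A_{n-k}]$ and $[B_1\cdots B_{n-k}]$, a common lower bound is $[C_1\cdots C_{n-k}]$ with $C_i$ ample and $A_i-C_i$, $B_i-C_i$ ample), so by Noetherianity of $X$ it has a minimal member, which therefore equals the intersection $\B_+(\alpha)$. Hence $\B_+(\alpha)=\emptyset$ yields ample $\R$-divisors $A_i$ with $\B_{\rm num}(\alpha-[A_1\cdots A_{n-k}])=\emptyset$, so $\beta:=\alpha-[A_1\cdots A_{n-k}]$ exhibits $\alpha\in P_k(X)$. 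Conversely, if $\alpha=[A_1\cdots A_{n-k}]+\beta$ with $\B_{\rm num}(\beta)=\emptyset$, set $A_i':=(1-\varepsilon)A_i$ with $0<\varepsilon<1$; then $\alpha-[A_1'\cdots A_{n-k}']=\mu+\beta$ with $\mu\in PCI_k(X)$ a non-negative combination of ample complete intersection classes, and for any $x\in X$, adding an effective representative of $\beta$ avoiding $x$ (since $\B_{\rm num}(\beta)=\emptyset$) to one of $\mu$ avoiding $x$ (Bertini) shows $x\notin\B_{\rm num}(\alpha-[A_1'\cdots A_{n-k}'])$; hence that locus, and a fortiori $\B_+(\alpha)$, is empty.

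I expect the real content to lie in the two structural inputs rather than in the formal manipulations above. The first is the fact (Section \ref{pf}) that a complete intersection of ample divisors is a big class: without it the forward implication of (i) would break, since $\B_+(\alpha)\subsetneq X$ only delivers a decomposition $\alpha=[A_1\cdots A_{n-k}]+[e]$ with $e$ effective. The second is the movability behind the monotonicity lemma — that classes in $PCI_k(X)$ have effective representatives missing any given point — which is precisely the reason one perturbs, in Definition \ref{luoghibase}, with complete intersections of (very) ample divisors and their finite flat push-forwards rather than with arbitrary pseudoeffective perturbations; it is this positivity of the perturbing classes that both makes $\B_{\rm num}$ behave monotonically and enables the Kodaira-type step in part (i).
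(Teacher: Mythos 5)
Your proof is correct and follows essentially the same route as the paper: the forward direction of (i) via the bigness of complete intersections of ample divisors (\cite[Lemma 2.12]{fl2}), the backward direction via openness of the big cone, and both directions of (ii) via the fact that $\B_+(\alpha)$ is computed by a single small perturbation, which is precisely the content of Proposition \ref{tutte}\eqref{riduzgen}(b). The only cosmetic difference is that for (ii) you re-derive that reduction through a monotonicity lemma plus Noetherianity of $X$, whereas the paper cites Proposition \ref{tutte}\eqref{riduzgen}(b) directly (whose own proof already uses the same finite-intersection and union-property ingredients you invoke).
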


We observe that while (i) is the same as in the case of divisors, (ii) introduces a novel positivity property of cycles which, in some sense, resembles ampleness of divisors.

As for the restricted base locus we have

\begin{bthm}
\label{b-}
Let $X$ be a projective variety of dimension $n$, let $k$ be an integer such that $1 \le k \leq n-1$ and let $\alpha \in N_k(X)$.
Then 
\begin{itemize}
\item[(i)] If $\B_-(\alpha) \subsetneq X$, then $\alpha$ is pseudoeffective;
\item[(ii)] If $\alpha$ is pseudoeffective and the base field is uncountable, then $\B_-(\alpha) \subsetneq X$; 
\item[(iii)] If $\B_-(\alpha) = \emptyset$, then $\alpha \in \overline{P_k(X)}$;
\item[(iv)] If $X$ is smooth and $\alpha \in \overline{P_k(X)}$, then $\B_-(\alpha) = \emptyset$. 
\end{itemize}
\end{bthm}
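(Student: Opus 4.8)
The plan is to treat the four assertions separately, working throughout from Definition \ref{luoghibase} and two elementary observations. \textbf{(a)} If $\delta\in N_k(X)$ is a nonnegative real combination of classes of complete intersections of ample $\R$-divisors, then for every point $x\in X$ the class $\delta$ is represented by an effective $\R$-cycle whose support avoids $x$ (write each ample $\R$-divisor as a positive combination of very ample integral divisors and take general members of the corresponding linear systems); consequently $\B_{\rm num}(\beta+\delta)\subseteq\B_{\rm num}(\beta)$ for all $\beta\in N_k(X)$, and $\B_{\rm num}(\delta)=\emptyset$. \textbf{(b)} Every $\gamma\in P_k(X)$ satisfies $\B_{\rm num}(\gamma)=\emptyset$: writing $\gamma=[C_1\cdots C_{n-k}]+\beta$ with $\B_{\rm num}(\beta)=\emptyset$, one adds to an effective representative of $\beta$ avoiding $x$ an effective representative of $[C_1\cdots C_{n-k}]$ avoiding $x$. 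Moreover every class of a complete intersection of ample $\R$-divisors lies in $P_k(X)$: it equals $[(\tfrac12 A_1)A_2\cdots A_{n-k}]+[(\tfrac12 A_1)A_2\cdots A_{n-k}]$, with the first summand in the complete-intersection slot and the second taken as $\beta$ (its $\B_{\rm num}$ is empty by (a)); by Theorem \ref{b+}(i)--(ii) such a class is, in particular, big.

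\textbf{Parts (i) and (iii).} If $\B_-(\alpha)=\bigcup_{A_1,\dots,A_{n-k}}\B_{\rm num}(\alpha+[A_1\cdots A_{n-k}])$ is not all of $X$, then every $\B_{\rm num}(\alpha+[A_1\cdots A_{n-k}])$ is a proper subset of $X$, so $\alpha+[A_1\cdots A_{n-k}]$ is represented by an effective cycle, hence pseudoeffective; taking $A_i=\varepsilon H$ with $H$ ample and letting $\varepsilon\to 0^{+}$ exhibits $\alpha$ as a limit of pseudoeffective classes, hence pseudoeffective. This is (i). For (iii), $\B_-(\alpha)=\emptyset$ forces $\B_{\rm num}(\alpha+\varepsilon^{n-k}[H^{n-k}])=\emptyset$ for every $\varepsilon>0$; putting $\beta_\varepsilon:=\alpha+\varepsilon^{n-k}[H^{n-k}]$ we get $\alpha+2\varepsilon^{n-k}[H^{n-k}]=[(\varepsilon H)^{n-k}]+\beta_\varepsilon\in P_k(X)$, and letting $\varepsilon\to 0^{+}$ gives $\alpha\in\overline{P_k(X)}$.

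\textbf{Part (iv).} Fix ample $\R$-Cartier $\R$-divisors $A_1,\dots,A_{n-k}$; it suffices to show $\B_{\rm num}(\alpha+[A_1\cdots A_{n-k}])=\emptyset$. Since $X$ is smooth, $P_k(X)$ is an open convex cone by Theorem \ref{aperto}, whence $\overline{P_k(X)}+P_k(X)\subseteq P_k(X)$. As $[A_1\cdots A_{n-k}]\in P_k(X)$ by the last remark of the first paragraph, we get $\alpha+[A_1\cdots A_{n-k}]\in\overline{P_k(X)}+P_k(X)\subseteq P_k(X)$, so $\B_{\rm num}(\alpha+[A_1\cdots A_{n-k}])=\emptyset$ by (b). Since the $A_i$ were arbitrary, $\B_-(\alpha)=\emptyset$.

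\textbf{Part (ii).} This is the delicate assertion. The plan is to exhibit $\B_-(\alpha)$ as a countable union of proper closed subsets of $X$ and then invoke the standard fact that an irreducible variety of positive dimension over an uncountable field (here $\dim X=n\ge 2$) is not a countable union of proper closed subsets. Fix a very ample $H$ and set $\gamma_m=\tfrac1{m^{n-k}}[H^{n-k}]=[(\tfrac1m H)^{n-k}]$. Given ample $\R$-divisors $A_1,\dots,A_{n-k}$, choose $m$ so large that each $A_i-\tfrac1m H$ is ample; telescoping $\prod A_i-\prod(\tfrac1m H)$ writes $[A_1\cdots A_{n-k}]-\gamma_m$ as a nonnegative combination of complete intersections of ample $\R$-divisors, so the monotonicity in (a) gives $\B_{\rm num}(\alpha+[A_1\cdots A_{n-k}])\subseteq\B_{\rm num}(\alpha+\gamma_m)$. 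Hence $\B_-(\alpha)=\bigcup_{m\ge 1}\B_{\rm num}(\alpha+\gamma_m)$. Each of these sets is closed, being an intersection of supports of cycles, and is a proper subset of $X$: indeed $\gamma_m$ is big (it is a complete intersection class) and $\alpha$ is pseudoeffective, so $\alpha+\gamma_m$ is big, hence represented by an effective cycle, whose support has dimension $k<n$. The main obstacle is precisely this last point — the passage through bigness of $\alpha+\gamma_m$, which relies on complete intersections of ample divisors being big and is what forces the uncountability hypothesis — together with, in (iv), the openness of $P_k(X)$ supplied by Theorem \ref{aperto}, which is where the smoothness assumption is genuinely used; the remaining parts (i) and (iii) are essentially formal once Definition \ref{luoghibase} is unwound.
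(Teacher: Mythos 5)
Your proof is correct and follows essentially the same route as the paper: (i) and (iii) are the same limit arguments, (ii) reduces $\B_-(\alpha)$ to the countable union $\bigcup_m \B_{\rm num}(\alpha+\gamma_m)$ (the paper's Proposition~\ref{tutte}\eqref{numerabile}) and uses bigness of $\alpha+\gamma_m$ plus uncountability, and (iv) uses the openness of $P_k(X)$ from Theorem~\ref{aperto}. The only cosmetic difference is in (iv): the paper takes a sequence $\beta_m\in P_k(X)$ converging to $\alpha$ and shows $[A_1\cdots A_{n-k}]+\alpha-\beta_m\in P_k(X)$ for $m\gg 0$, while you package the same openness fact as the general statement $\overline{C}+C\subseteq C$ for an open convex cone $C$; both amount to the same thing.
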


Again (i) and (ii) resemble the case of divisors, while (iii) and (iv) give more information on the cone $P_k(X)$.

We would like to thank J.C. Ottem for several helpful conversations.

We also thank the referee for the big contribution given to improve the paper. 

\section{Notation} 
\label{not}

A {\it variety} is by definition an integral separated scheme of finite type over a field.

Throughout the paper $X$ will be a projective variety of dimension $n \ge 2$ defined over an arbitrary algebraically closed field and, unless otherwise specified, $k$ will be an integer such that $1 \le k \leq n-1$. In some cases we will require that $X$ is smooth. Whenever countability arguments are required, in Theorem \ref{b-}(ii), Proposition \ref{stab}, Remark \ref{altro} and Corollary \ref{dense}, we will need the base field to be uncountable. 

Let ${\rm deg}:A_0(X) \to \Z$ be the group homomorphism that sends any point to $1$. A $k$-cycle $Z$ is said to be {\it numerically trivial} if $\deg(P(E_I) \cap Z) = 0$ for any weight $k$ homogeneous polynomial $P (E_I )$ in Chern classes of a finite set of vector bundles on X (see \cite[Def.~19.1]{fu}). The quotient of $Z_k(X)$ by the numerically trivial cycles is denoted by $N_k(X)_{\Z}$; this is a free abelian group of finite rank by \cite[Ex.~19.1.4]{fu}. We set $N_k(X) = N_k(X)_{\ZZ} \otimes_{\Z}  {\R}$.

The cone of {\it effective} $k$-cycles will be denoted by $\Eff_k(X)$; the cone of {\it pseudoeffective} $k$-cycles is the closure $\overline{\Eff_k(X)}$ and the cone of {\it big} $k$-cycles is ${\rm Big}_k(X) := \Int(\overline{\Eff_k(X)})$.

\section{The case of divisors} 
\label{div}

In this section we verify that, in the case of divisors, the definitions of augmented and restricted base loci can equivalently be given using numerical base loci. 

\begin{defi}
\label{bl}
Let $D$ be an $\R$-Cartier $\R$-divisor on $X$. Set
\[ |D|_{\sim \R} = \{ E \ \R\mbox{-Cartier} \ \R\mbox{-divisor on $X$} : E \ \mbox{is effective and} \ E \sim_{\R} D \}. \]
\[ |D|_{\rm num} = \{ E \ \R\mbox{-Cartier} \ \R\mbox{-divisor on $X$} : E \ \mbox{is effective and} \ E \equiv D \}. \]
The {\it stable base locus} of $D$ is
\[ \B(D)= \begin{cases} \ \ \ \ \ X & {\rm if} \ |D|_{\sim \R} = \emptyset \\ \bigcap\limits_{\ \ \ E \in |D|_{\sim \R}} {\rm Supp}(E) & {\rm if} \ |D|_{\sim \R} \neq \emptyset \end{cases} . \]
The {\it numerical stable base locus} of $D$ is
\[ \B_{\rm num}(D) = \begin{cases} \ \ \ \ \ X & {\rm if} \ |D|_{\rm num} = \emptyset \\ \bigcap\limits_{\ \ \ E \in |D|_{\rm num}} {\rm Supp}(E) & {\rm if} \ |D|_{\rm num} \neq \emptyset \end{cases} . \]
The {\it augmented base locus} of $D$ is 
\[ \B_+(D) = \bigcap\limits_{A} \B(D - A) \]
and the {\it restricted base locus} of $D$ is 
\[ \B_-(D) = \bigcup\limits_{A} \B(D + A) \]
\noindent where $A$ runs among all ample $\R$-Cartier $\R$-divisors on $X$. 
\end{defi}
The above definitions of $\B_+(D)$ and $\B_-(D)$ concide with the ones in \cite{elmnp1}. For $\B_+(D)$ it is straightforward that Definition \ref{bl} is the same as \cite[Def.~1.2]{elmnp1}. Given this, for $\B_-(D)$, to prove the equivalence, one can use \cite[Lemma 1.14]{elmnp1} and Proposition \ref{tutte}\eqref{unb+}.

We recall that in \cite[Ex.~1.16]{elmnp1} is shown that $\B_-(D) \subseteq \B_+(D)$ and if $D$ is a $\Q$-Cartier $\Q$-divisor, then $\B_-(D) \subseteq \B(D) \subseteq \B_+(D)$. 

As for the relation with numerical base loci, we have
 
\begin{lemma}
\label{divis}
Let $D$ be an $\R$-Cartier $\R$-divisor on $X$.
Then
\begin{itemize}
\item[(i)] $\B_-(D) \subseteq \B_{\rm num}(D) \subseteq \B(D) \subseteq \B_+(D)$; 
\item[(ii)] $\B_+(D)= \bigcap\limits_{A} \B_{\rm num} (D - A)$,
\item[(iii)] $\B_-(D)= \bigcup\limits_{A} \B_{\rm num} (D + A)$ 
\end{itemize}
where in (ii) and (iii) $A$ runs among all ample $\R$-Cartier $\R$-divisors on $X$.
\end{lemma}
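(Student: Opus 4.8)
The plan is to isolate one key inclusion and deduce (i)--(iii) from it by formal manipulations. The key inclusion is
\[ \B(D+A)\ \subseteq\ \B_{\rm num}(D)\qquad\text{for every ample $\R$-Cartier $\R$-divisor $A$.}\]
I will use two elementary facts, both standard. First, every ample $\R$-Cartier $\R$-divisor $A$ satisfies $\B(A)=\emptyset$ (being a positive real combination of ample integral Cartier divisors, suitable multiples of which are base-point-free); in particular $|A|_{\sim\R}\neq\emptyset$, and the same moving argument yields the monotonicity $\B(D)\subseteq\B(D-A)$ for every ample $A$. Second, ampleness of an $\R$-divisor depends only on its numerical class, so $A+N$ is ample whenever $A$ is ample and $N$ is numerically trivial.

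\emph{Proof of the key inclusion.} If $|D|_{\rm num}=\emptyset$ then $\B_{\rm num}(D)=X$ and there is nothing to prove, so fix $F\in|D|_{\rm num}$, i.e.\ $F\geq 0$ and $F\equiv D$. Then $D-F$ is numerically trivial, so $A+(D-F)$ is ample; hence, given any point $x\notin\Supp(F)$, we may choose an effective $\R$-divisor $G\sim_\R A+(D-F)$ with $x\notin\Supp(G)$. Since $F+G\sim_\R D+A$ is effective and avoids $x$, we conclude $x\notin\B(D+A)$. Therefore $\B(D+A)\subseteq\Supp(F)$, and intersecting over all $F\in|D|_{\rm num}$ gives $\B(D+A)\subseteq\B_{\rm num}(D)$.

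\emph{Deduction of (i)--(iii).} For (i): $\B_{\rm num}(D)\subseteq\B(D)$ is immediate from $|D|_{\sim\R}\subseteq|D|_{\rm num}$; $\B(D)\subseteq\B_+(D)=\bigcap_A\B(D-A)$ follows from the monotonicity noted above; and $\B_-(D)=\bigcup_A\B(D+A)\subseteq\B_{\rm num}(D)$ is exactly the key inclusion. For (ii), ``$\supseteq$'' comes from $\B_{\rm num}(D-A)\subseteq\B(D-A)$ by intersecting over $A$; for ``$\subseteq$'', fix an ample $A$: then $\B_+(D)\subseteq\B(D-\tfrac12A)$ by definition of $\B_+$, and applying the key inclusion with $D$ replaced by $D-A$ and ample divisor $\tfrac12A$ gives $\B(D-\tfrac12A)=\B\bigl((D-A)+\tfrac12A\bigr)\subseteq\B_{\rm num}(D-A)$, so $\B_+(D)\subseteq\B_{\rm num}(D-A)$ for every ample $A$. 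For (iii), ``$\supseteq$'' comes from $\B_{\rm num}(D+A)\subseteq\B(D+A)\subseteq\B_-(D)$; for ``$\subseteq$'', apply the key inclusion with $D$ replaced by $D+\tfrac12A$ to get $\B(D+A)\subseteq\B_{\rm num}(D+\tfrac12A)$, whence $\B_-(D)=\bigcup_A\B(D+A)\subseteq\bigcup_A\B_{\rm num}(D+\tfrac12A)\subseteq\bigcup_A\B_{\rm num}(D+A)$.

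The only substantive point is the key inclusion, and its one subtlety is the passage from numerical to $\R$-linear equivalence: the numerically trivial difference $D-F$ is absorbed into the ample class $A$ --- legitimate precisely because ampleness is a numerical notion --- and the resulting effective $\R$-divisor is then moved off any prescribed point. Everything else is bookkeeping with the monotonicity and the trivial inclusion $|D|_{\sim\R}\subseteq|D|_{\rm num}$.
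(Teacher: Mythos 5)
Your proof is correct. The paper dismisses this lemma with ``This is straightforward'' and gives no details, so there is no argument of its own to compare against; your write-up, organized around the single key inclusion $\B(D+A)\subseteq\B_{\rm num}(D)$ (which rests on the fact that ampleness of $\R$-divisors is a numerical property, letting you absorb the numerically trivial difference $D-F$ into $A$ and then move the resulting ample class off any point outside $\Supp(F)$), is a clean and correct way to supply those details, and parts (i)--(iii) then do follow by the bookkeeping you describe.
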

\begin{proof}
This is straightforward.
\end{proof}

In particular, when Cartier and Weil divisors coincide, the two notions of base loci, associated to a Cartier divisor and to its class, are the same.

\begin{cor}
\label{divisori}
Let $X$ be a locally factorial projective variety, let $D$ be a Weil $\R$-divisor on $X$ and let $[D] \in N_{n-1}(X)$.
Then
\begin{itemize}
\item[(i)] $\B_{\rm num}(D) = \B_{\rm num}([D])$;
\item[(ii)] $\B_+(D) = \B_+([D])$;
\item[(iii)] $\B_-(D)= \B_-([D])$.
\end{itemize}
\end{cor}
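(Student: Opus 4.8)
The plan is to reduce everything to Lemma~\ref{divis} together with the fact that, on a locally factorial variety, a Weil $\R$-divisor is automatically $\R$-Cartier, so that the class $[D] \in N_{n-1}(X)$ is represented by the Cartier divisor $D$ in a compatible way. The key point to establish is part~(i); parts~(ii) and~(iii) will then follow formally by intersecting (resp.\ taking unions) over all ample $\R$-Cartier $\R$-divisors $A$, using Lemma~\ref{divis}(ii) and~(iii) on the divisor side and Definition~\ref{luoghibase} (in the case $n-k=1$, i.e.\ perturbing by a single ample divisor class $[A]$) on the cycle side.

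For~(i), the content is that the two sets
\[
|D|_{\rm num} = \{ E \ \R\text{-Cartier } \R\text{-divisor} : E \geq 0,\ E \equiv D \}
\quad\text{and}\quad
|[D]|_{\rm num} = \{ e \in Z_{n-1}(X)_{\R} : e \text{ effective},\ [e] = [D] \}
\]
give the same intersection of supports. One inclusion is immediate: every effective $\R$-Cartier $\R$-divisor $E$ is in particular an effective Weil $(n-1)$-cycle, and $E \equiv D$ implies $[E] = [D]$ in $N_{n-1}(X)$ (numerical equivalence of Cartier divisors refines numerical equivalence of cycles, by compatibility of intersection products with the cycle map), so $|D|_{\rm num} \subseteq |[D]|_{\rm num}$ and hence $\B_{\rm num}([D]) \subseteq \B_{\rm num}(D)$. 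For the reverse inclusion one uses local factoriality: an effective Weil $(n-1)$-cycle $e = \sum a_i V_i$ with $a_i > 0$ has each prime divisor $V_i$ Cartier, hence $e$ is itself an effective $\R$-Cartier $\R$-divisor with the same support; and if $[e] = [D]$ in $N_{n-1}(X)$, then $e \equiv D$ as $\R$-Cartier $\R$-divisors, because on a locally factorial variety numerical equivalence of Cartier divisors is detected by $N_{n-1}(X)$ (equivalently, the natural map from Cartier divisor classes modulo numerical equivalence to $N_{n-1}(X)_{\Z}$ is injective; this is where factoriality is essential, as in general $\Pic$ modulo numerical equivalence can be a proper subgroup of $N_{n-1}$). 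Thus $|[D]|_{\rm num} \subseteq |D|_{\rm num}$ and the two numerical base loci coincide. I should also handle the degenerate case where one (hence the other) of the two sets is empty, where both loci equal $X$ by convention.

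For~(ii) and~(iii): by Lemma~\ref{divis}(ii), $\B_+(D) = \bigcap_A \B_{\rm num}(D-A)$ over ample $\R$-Cartier $\R$-divisors $A$, and for each such $A$ the divisor $D - A$ is again a Weil $\R$-divisor on the locally factorial $X$, so part~(i) applies to give $\B_{\rm num}(D-A) = \B_{\rm num}([D-A]) = \B_{\rm num}([D] - [A])$. On the cycle side, Definition~\ref{luoghibase} with $n - k = 1$ reads $\B_+([D]) = \bigcap_{A} \B_{\rm num}([D] - [A])$, the intersection taken over the same family of ample $\R$-Cartier $\R$-divisors $A$ (a single $A_1 = A$). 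These two intersections are literally equal term by term, giving~(ii); the argument for~(iii) is identical with intersections replaced by unions, using Lemma~\ref{divis}(iii).

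\textbf{Main obstacle.} The only non-formal step is the reverse inclusion in~(i): the claim that on a locally factorial variety two Cartier divisors numerically equivalent as $(n-1)$-cycles are numerically equivalent as divisors. This amounts to the injectivity of the map $\NS(X)_{\R} \hookrightarrow N_{n-1}(X)$ in the locally factorial case, which should be citable or follow quickly from the definition of numerical triviality of cycles in section~\ref{not} (testing against Chern classes of vector bundles, in particular against products $[A_1 \cdots A_{n-1}]$ of ample divisors, which separate numerical classes of divisors); everything else is bookkeeping with supports and the convention for empty linear systems.
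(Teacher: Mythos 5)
Your proof is correct and takes essentially the same approach as the paper, whose own proof is the one-liner ``Follows by the definitions and Lemma~\ref{divis}''; you simply spell out the details. One small clarification on your parenthetical remark: local factoriality is used to ensure that every effective Weil $(n-1)$-cycle is an effective $\R$-Cartier $\R$-divisor with the same support (which is a surjectivity-type statement), whereas the injectivity you invoke --- that two Cartier divisors numerically equivalent as $(n-1)$-cycles are already numerically equivalent as divisors --- is a general fact for projective varieties and does not depend on factoriality, so the observation that $\Pic$ modulo numerical equivalence may be a proper subgroup of $N_{n-1}$ is about surjectivity rather than the injectivity step you attach it to.
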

\begin{proof}
Follows by the definitions and Lemma \ref{divis}.
\end{proof}

\section{Properties of base loci of cycles} 
\label{prbase}

We collect in one single statement some basic properties. They are analogues of similar results in \cite{elmnp1}.

\begin{prop}
\label{tutte}
Let $\alpha, \beta \in N_k(X)$ and let $A_1, \ldots, A_{n-k}$ be ample $\R$-Cartier $\R$-divisors on $X$. Then 
\begin{enumerate}[(i)]
\item \label{unione-i} $\B_{\rm num}(\alpha) = \B_{\rm num}(b \alpha)$ for every $b \in \R^+$;
\item \label{unione-ii} $\B_{\rm num}(\alpha + \beta) \subseteq \B_{\rm num}(\alpha) \cup \B_{\rm num}(\beta)$;
\item \label{nest} $\B_-(\alpha) \subseteq \B_{\rm num}(\alpha) \subseteq \B_+(\alpha)$;
\item \label{riduzgen}  For any ample $\R$-Cartier $\R$-divisors $A'_1, \ldots, A'_{n-k}$ on $X$ there exists an $\varepsilon_0 > 0$ such that for every $0 < \varepsilon \leq \varepsilon_0$ we have
\begin{enumerate}[(a)] 
\item \label{riduzgen-a} $\B_{\rm num} ([A_1 \cdots A_{n-k}] - \varepsilon [A'_1 \cdots A'_{n-k}]) = \emptyset$; 
\item \label{riduzgen-b} $\B_+(\alpha) = \B_{\rm num}(\alpha - \varepsilon [A'_1 \cdots A'_{n-k}])$.
\end{enumerate}
\item \label{unione2} $\B_+(\alpha + \beta) \subseteq \B_+(\alpha) \cup \B_+(\beta)$;
\item \label{numerabile} $\B_-(\alpha) = \bigcup\limits_{m \in \NN^+} \B_{\rm num}(\alpha + \frac{1}{m} [A_1 \cdots A_{n-k}])$;
\item \label{unb+} $\B_-(\alpha) = \bigcup\limits_{A'_1, \ldots, A'_{n-k}} \B_+(\alpha + [A'_1 \cdots A'_{n-k}])$
where $A'_1, \ldots, A'_{n-k}$ run among all ample $\R$-Cartier $\R$-divisors on $X$. 
\end{enumerate}
\end{prop}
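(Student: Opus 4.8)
The plan is to prove the seven items essentially in the order listed, since several of them feed into later ones. Items \eqref{unione-i} and \eqref{unione-ii} are pure formal manipulations of the definition of $\B_{\rm num}$: for \eqref{unione-i}, if $e \in |\alpha|_{\rm num}$ then $be \in |b\alpha|_{\rm num}$ and $\Supp(be) = \Supp(e)$, and conversely; for \eqref{unione-ii}, given $e \in |\alpha|_{\rm num}$ and $e' \in |\beta|_{\rm num}$ the cycle $e + e'$ lies in $|\alpha+\beta|_{\rm num}$ and has support contained in $\Supp(e) \cup \Supp(e')$, so intersecting over all such pairs gives the claim (with the usual convention handling the empty cases). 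Item \eqref{nest} is immediate from Definition \ref{luoghibase}: taking the complete intersection term to be a fixed small multiple of a complete intersection shows $\B_-(\alpha)$ contains $\B_{\rm num}(\alpha)$ only after one checks the directedness, so more precisely one argues $\B_{\rm num}(\alpha) \subseteq \B_+(\alpha)$ because $\alpha$ itself arises as $(\alpha - \gamma) + \gamma$ and \eqref{unione-ii} applies, while $\B_-(\alpha) \subseteq \B_{\rm num}(\alpha)$ follows symmetrically; the one subtlety is to make sure the ``perturb and un-perturb'' step is legitimate, which is where \eqref{riduzgen-a} will be used.

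The technical heart is item \eqref{riduzgen}. For \eqref{riduzgen-a} I would argue that $[A_1\cdots A_{n-k}] - \varepsilon[A_1'\cdots A_{n-k}']$ is, for $\varepsilon$ small, still the class of a genuine complete intersection of ample $\R$-divisors: one writes $A_i - \delta A_i'$ as ample for $\delta$ small and then expands the product, or more robustly one observes that complete intersections of ample $\R$-Cartier divisors form a cone with nonempty interior in the relevant subspace and that a small perturbation stays inside; then $\B_{\rm num}$ of an honest effective complete intersection of very ample divisors is empty because one can choose the members to avoid any prescribed point. (Here I would reduce to $\Q$-divisors and then to a common very ample polarization by openness of ampleness.) Given \eqref{riduzgen-a}, item \eqref{riduzgen-b} is a directedness argument: the family of classes $\{\alpha - [A_1\cdots A_{n-k}]\}$ is ``cofinal'' in the sense that any two can be dominated by a third, using \eqref{riduzgen-a} to compare $[A_1\cdots A_{n-k}]$ with $\varepsilon[A_1'\cdots A_{n-k}']$, and then \eqref{unione-ii} together with the emptiness in \eqref{riduzgen-a} shows the intersection defining $\B_+$ stabilizes at any sufficiently small perturbation. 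This mirrors \cite[Proposition 1.5]{elmnp1}.

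The remaining items follow formally. Item \eqref{unione2} combines \eqref{riduzgen-b} with \eqref{unione-ii}: choose one small perturbation computing $\B_+(\alpha)$, another computing $\B_+(\beta)$, and a common smaller one $\varepsilon[A_1'\cdots A_{n-k}']$ computing both; then $\alpha+\beta - 2\varepsilon[A_1'\cdots A_{n-k}']$ decomposes as a sum of the two perturbed classes, and \eqref{unione-ii} gives the inclusion (after rescaling the perturbation via \eqref{unione-i}). Item \eqref{numerabile} is a countability-free observation that for a \emph{fixed} tuple $A_1,\dots,A_{n-k}$ the loci $\B_{\rm num}(\alpha + t[A_1\cdots A_{n-k}])$ are nested increasing as $t \downarrow 0$ (by \eqref{riduzgen-a}-style comparison), so the union over all rational $t>0$ equals the union over $t = 1/m$; and then one must still check that ranging over a single tuple already captures the full $\B_-(\alpha)$, which uses \eqref{riduzgen-a} to absorb any other complete intersection into a multiple of the chosen one. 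Item \eqref{unb+} is the analogue of \cite[Lemma 1.14]{elmnp1}: the inclusion $\supseteq$ is \eqref{nest}, and for $\subseteq$ one writes any perturbation $\alpha + [A_1'\cdots A_{n-k}']$ used in $\B_-$ as $(\alpha + \tfrac12[A_1'\cdots A_{n-k}']) - (-\tfrac12[A_1'\cdots A_{n-k}'])$ is not quite right, so instead one splits $[A_1'\cdots A_{n-k}'] = [A_1''\cdots A_{n-k}''] + \gamma$ with $\gamma$ again a complete intersection and notes $\B_{\rm num}(\alpha+[A_1'\cdots A_{n-k}']) \subseteq \B_{\rm num}((\alpha + \gamma) - (-[A_1''\cdots A_{n-k}''])) $, handing the computation to $\B_+(\alpha+\gamma)$.

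I expect the main obstacle to be \eqref{riduzgen-a}: making precise that a complete intersection of ample $\R$-Cartier $\R$-divisors, perturbed by subtracting a small complete intersection, is \emph{again} represented by an effective cycle with empty numerical base locus, without assuming smoothness or factoriality. The cleanest route is to pass to $\Q$-coefficients, clear denominators, replace each ample divisor by a positive multiple that is very ample, and then use Bertini-type moving of the members through or away from any given point; the $\R$-coefficient case follows by a density/openness argument in the finite-dimensional space of complete-intersection classes. Everything else is bookkeeping with \eqref{unione-i}, \eqref{unione-ii} and the directedness they make possible.
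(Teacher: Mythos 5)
Your overall plan tracks the paper's proof closely for items (i), (ii), (v), (vi), (vii), and you correctly identify \eqref{riduzgen-a} as the technical heart. The problem is that neither of your two proposed routes to \eqref{riduzgen-a} actually proves it.

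Your first route -- take $\delta$ small so that $A_i - \delta A'_i$ is ample and expand the product $\prod_i (A_i - \delta A'_i)$ -- does not produce $[A_1\cdots A_{n-k}] - \varepsilon[A'_1\cdots A'_{n-k}]$. Expanding gives $[A_1\cdots A_{n-k}]$ minus a first-order term $\delta\sum_i[A'_i\cdot\prod_{j\neq i}A_j]$ plus higher-order terms with alternating signs, and none of the correction terms is a positive multiple of $[A'_1\cdots A'_{n-k}]$. The signs do not cooperate, and there is no obvious way to isolate the class you need. Your second route -- ``$CI_k(X)$ has nonempty interior in its linear span and a small perturbation stays inside'' -- is essentially a restatement of the claim rather than a proof: a point of a convex cone lies in the relative interior precisely when subtracting a sufficiently small multiple of any other element of the cone keeps you in the cone, which is exactly \eqref{riduzgen-a}. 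You have relocated the difficulty, not removed it. The paper instead gives an explicit decomposition: write $A_i = \sum_j c_{ij}A_{ij}$ and $A'_i = \sum_l d_{il}A''_{il}$ with $A_{ij}, A''_{il}$ ample Cartier, choose $m$ large so that $D_{ijl} := mA_{ij}-A''_{il}$ is ample, substitute $A_{ij} = \frac{1}{m}(A''_{il}+D_{ijl})$ (averaged over $l$), and expand $[A_1\cdots A_{n-k}]$. This exhibits it as an explicit positive multiple of each $[A''_{1l_1}\cdots A''_{n-k,l_{n-k}}]$ plus further complete intersections of ample divisors; subtracting a small enough multiple of $[A'_1\cdots A'_{n-k}] = \sum d_{1l_1}\cdots d_{n-k,l_{n-k}}[A''_{1l_1}\cdots A''_{n-k,l_{n-k}}]$ then keeps every coefficient strictly positive, so the difference is again a nonnegative combination of complete intersections and \eqref{unione-ii} finishes. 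This computation is what your sketch lacks.

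Two smaller points. Item \eqref{nest} does not require \eqref{riduzgen-a}: it only needs that $\B_{\rm num}$ of a complete intersection of ample divisors is empty (reduce to ample Cartier divisors via \eqref{unione-ii} and then apply Bertini), after which the chain $\B_{\rm num}(\alpha + \gamma) \subseteq \B_{\rm num}(\alpha) \subseteq \B_{\rm num}(\alpha - \gamma)$ follows directly from \eqref{unione-ii}. And in your discussion of \eqref{unb+} the two inclusion directions are swapped: the easy direction is $\B_-(\alpha) \subseteq \bigcup \B_+(\alpha + [A'_1\cdots A'_{n-k}])$, which follows from the definition of $\B_-$ and \eqref{nest}; the direction that needs work is $\B_+(\alpha + [A'_1\cdots A'_{n-k}]) \subseteq \B_-(\alpha)$, and the rewriting you offer is just an identity and does not yield a containment -- one should instead use \eqref{riduzgen-b} to express $\B_+(\alpha + [A'_1\cdots A'_{n-k}])$ as $\B_{\rm num}$ of a small perturbation and then apply \eqref{unione-ii}.
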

\begin{proof}
The proof of \eqref{unione-i} and \eqref{unione-ii} is straightforward.

To see \eqref{nest}, since $\B_{\rm num} ([A_1 \cdots A_{n-k}]) = \emptyset$ by \eqref{unione-ii}, we have that
\[ \B_{\rm num} (\alpha + [A_1 \cdots A_{n-k}]) \subseteq \B_{\rm num} (\alpha) \subseteq \B_{\rm num} (\alpha - [A_1 \cdots A_{n-k}]) \]
and then \eqref{nest} follows by Definition \ref{luoghibase}.

To see \eqref{riduzgen} observe that for $i = 1, \ldots, n-k$ we can write 
\[ A_i = \sum\limits_{j = 1}^{s_i} c_{ij} A_{ij} \ \mbox{and} \ A'_i = \sum\limits_{l = 1}^{t_i} d_{il} A''_{il} \]
with $c_{ij}, d_{il} \in \R^+$, $A_{ij}, A''_{il}$ ample Cartier divisors. Let $m \gg 0$ be such that
$D_{ijl} := m A_{ij} - A''_{il}$ is ample for every $i, j, l$. Now
\[ [A'_1 \cdots A'_{n-k}] = \sum\limits_{l_1 \in \{1, \ldots, t_1\}, \ldots, l_{n-k} \in \{1, \ldots, t_{n-k}\}} d_{1l_1} \cdots   d_{n-k,l_{n-k}} [A''_{1l_1} \cdots A''_{n-k,l_{n-k}}] \]
and, setting $t = t_1 \cdots t_{n-k}$
\[ [A_1 \cdots A_{n-k}] = \sum\limits_{j_1 \in \{1, \ldots, s_1\}, \ldots, j_{n-k} \in \{1, \ldots, s_{n-k}\}} c_{1j_1} \cdots  c_{n-k,j_{n-k}} [A_{1j_1} \cdots A_{n-k,j_{n-k}}]  = \]
\[ = \sum\limits_{j_1, \ldots, j_{n-k}} c_{1j_1} \cdots  c_{n-k,j_{n-k}} \frac{1}{t^{n-k}}[(tA_{1j_1}) \cdots (tA_{n-k,j_{n-k}})]  = \]
\[ = \sum\limits_{j_1, \ldots, j_{n-k}} c_{1j_1} \cdots  c_{n-k,j_{n-k}} \frac{1}{t^{n-k}}[(\sum\limits_{l_1, \ldots, l_{n-k}}A_{1j_1}) \cdots (\sum\limits_{l_1, \ldots, l_{n-k}}A_{n-k,j_{n-k}})]  = \]
\[ = \sum\limits_{j_1, \ldots, j_{n-k}} c_{1j_1} \cdots  c_{n-k,j_{n-k}} \frac{1}{t^{n-k}}[(\frac{1}{m} \sum\limits_{l_1, \ldots, l_{n-k}}(A''_{1l_1} + D_{1j_1l_1})) \cdots (\frac{1}{m} \sum\limits_{l_1, \ldots, l_{n-k}}(A''_{n-k,l_{n-k}} + D_{n-k,j_{n-k},l_{n-k}}))]  = \]
\[ = \sum\limits_{j_1, \ldots, j_{n-k}} c_{1j_1} \cdots  c_{n-k,j_{n-k}} \frac{1}{(tm)^{n-k}}[\sum\limits_{l_1, \ldots, l_{n-k}}A''_{1l_1} \cdots A''_{n-k,l_{n-k}}] + \beta \]
where $\beta$ is sum of intersections of $A''_{il}$ and $D_{ijl}$. Let $\varepsilon_0 > 0$ be such that
\[  \sum\limits_{j_1, \ldots, j_{n-k}} \frac{c_{1j_1} \cdots  c_{n-k,j_{n-k}}}{(tm)^{n-k}} - \varepsilon_0 d_{1l_1} \cdots   d_{n-k,l_{n-k}} > 0 \ \mbox{for every} \ l_1, \ldots, l_{n-k}. \]
Then 
\[ [A_1 \cdots A_{n-k}] - \varepsilon_0 [A'_1 \cdots A'_{n-k}] = \]
\[ = \sum\limits_{l_1, \ldots, l_{n-k}}\big( \sum\limits_{j_1, \ldots, j_{n-k}} \frac{c_{1j_1} \cdots  c_{n-k,j_{n-k}}}{(tm)^{n-k}} - \varepsilon_0 d_{1l_1} \cdots   d_{n-k,l_{n-k}} \big) [A''_{1l_1} \cdots A'_{n-k,l_{n-k}}] + \beta \]
Hence $[A_1 \cdots A_{n-k}] - \varepsilon_0 [A'_1 \cdots A'_{n-k}]$ is a sum of intersections of ample $\R$-Cartier $\R$-divisors and so is $[A_1 \cdots A_{n-k}] - \varepsilon [A'_1 \cdots A'_{n-k}]$. Therefore
$\B_{\rm num} ([A_1 \cdots A_{n-k}] - \varepsilon [A'_1 \cdots A'_{n-k}]) = \emptyset$ by \eqref{unione-ii}. This proves \eqref{riduzgen}(a).

To see \eqref{riduzgen}(b) choose $A_{i1}, \ldots, A_{i, n-k}$, $1 \leq i \leq s$, ample $\R$-Cartier $\R$-divisors such that 
 \[ \B_+(\alpha)= \bigcap\limits_{i=1}^s \B_{\rm num}(\alpha - [A_{i1} \cdots A_{i, n-k}]). \]
 By \eqref{riduzgen}(a) we can choose $\varepsilon_0 > 0$ such that for every $0 < \varepsilon \leq \varepsilon_0$ we have 
 \[ \B_{\rm num} ([A_{i1} \cdots A_{i,n-k}] - \varepsilon [A'_1 \cdots A'_{n-k}]) = \emptyset \ \mbox{for all} \ 1 \leq i \leq s. \]
 Therefore, using \eqref{unione-ii}, for all $1 \leq i \leq s$ we have
 \[ \B_{\rm num}(\alpha - \varepsilon [A'_1 \cdots A'_{n-k}]) = \B_{\rm num}(\alpha - [A_{i1} \cdots A_{i, n-k}] + [A_{i1} \cdots A_{i, n-k}] - \varepsilon [A'_1 \cdots A'_{n-k}]) \subseteq  \]
\[ \subseteq \B_{\rm num}(\alpha - [A_{i1} \cdots A_{i, n-k}]) \cup \B_{\rm num}([A_{i1} \cdots A_{i, n-k}] - \varepsilon [A'_1 \cdots A'_{n-k}]) =  \B_{\rm num}(\alpha - [A_{i1} \cdots A_{i, n-k}])\]
hence $\B_{\rm num}(\alpha - \varepsilon [A'_1 \cdots A'_{n-k}]) \subseteq \B_+(\alpha)$. Since the other inclusion follows by Definition \ref{luoghibase}, we get \eqref{riduzgen}(b).

As for \eqref{unione2}, let $A$ be an ample Cartier divisor on $X$. By \eqref{riduzgen}(b) we can choose an $\varepsilon_0 > 0$ such that $\B_+(\alpha) = \B_{\rm num}(\alpha - \varepsilon [A^{n-k}])$, $\B_+(\beta) = \B_{\rm num}(\beta - \varepsilon [A^{n-k}])$ and $\B_+(\alpha+ \beta) = \B_{\rm num}(\alpha + \beta - \varepsilon [A^{n-k}])$ for every $0 < \varepsilon \leq \varepsilon_0$. Now \eqref{unione-ii} gives
\[ \B_+(\alpha + \beta) = \B_{\rm num}(\alpha + \beta - \varepsilon [A^{n-k}]) = \B_{\rm num}(\alpha - \frac{\varepsilon}{2} [A^{n-k}] + \beta - \frac{\varepsilon}{2} [A^{n-k}]) \subseteq \]
\[ \subseteq \B_{\rm num}(\alpha - \frac{\varepsilon}{2} [A^{n-k}]) \cup \B_{\rm num}(\beta - \frac{\varepsilon}{2} [A^{n-k}]) = \B_+(\alpha) \cup \B_+(\beta). \]

To see \eqref{numerabile}, for any ample $\R$-Cartier $\R$-divisors $A'_1, \ldots, A'_{n-k}$ we have, by \eqref{riduzgen}(a), that 
\[ \B_{\rm num}([A'_1 \cdots A'_{n-k}] -  \frac{1}{m} [A_1 \cdots A_{n-k}]) = \emptyset \ \mbox{for} \ m \gg 0. \]
But then, using \eqref{unione-ii},
\[ \B_{\rm num}(\alpha + [A'_1 \cdots A'_{n-k}]) =  \B_{\rm num}(\alpha + \frac{1}{m} [A_1 \cdots A_{n-k}] + [A'_1 \cdots A'_{n-k}] -  \frac{1}{m} [A_1 \cdots A_{n-k}]) \subseteq  \]
\[ \subseteq \B_{\rm num}(\alpha + \frac{1}{m} [A_1 \cdots A_{n-k}]) \cup  \B_{\rm num}([A'_1 \cdots A'_{n-k}] -  \frac{1}{m} [A_1 \cdots A_{n-k}]) = \B_{\rm num}(\alpha + \frac{1}{m} [A_1 \cdots A_{n-k}]) \]
and therefore
\[ \B_-(\alpha) \subseteq \bigcup\limits_{m \in \NN^+} \B_{\rm num}(\alpha + \frac{1}{m} [A_1 \cdots A_{n-k}]). \]
The other inclusion follows by definition of $\B_-(\alpha)$. Hence  \eqref{numerabile} is proved.

Finally to show \eqref{unb+} let $A'_1, \ldots, A'_{n-k}$ be ample $\R$-Cartier $\R$-divisors and let $A$ be an ample Cartier divisor on $X$. By \eqref{riduzgen} we can choose an $\varepsilon_0 > 0$ such that $\B_+(\alpha + [A'_1 \cdots A'_{n-k}]) = \B_{\rm num}(\alpha + [A'_1 \cdots A'_{n-k}] - \varepsilon [A^{n-k}])$ and $\B_{\rm num}([A'_1 \cdots A'_{n-k}] - \varepsilon [A^{n-k}]) = \emptyset$ for every $0 < \varepsilon \leq \varepsilon_0$. Now, using \eqref{unione-ii}, we get
\[ \B_+(\alpha + [A'_1 \cdots A'_{n-k}]) =  \B_{\rm num}(\alpha + [A'_1 \cdots A'_{n-k}] - \frac{\varepsilon}{2} [A^{n-k}]) = \]
\[ = \B_{\rm num}(\alpha + \frac{\varepsilon}{2} [A^{n-k}] + [A'_1 \cdots A'_{n-k}] - \varepsilon [A^{n-k}]) \subseteq \]
\[ \subseteq \B_{\rm num}(\alpha + \frac{\varepsilon}{2} [A^{n-k}]) \cup  \B_{\rm num}([A'_1 \cdots A'_{n-k}] - \varepsilon [A^{n-k}]) = \B_{\rm num}(\alpha + \frac{\varepsilon}{2} [A^{n-k}]) \subseteq \B_-(\alpha) \]
and therefore
\[ \bigcup\limits_{A'_1, \ldots, A'_{n-k}} \B_+(\alpha + [A'_1 \cdots A'_{n-k}]) \subseteq \B_-(\alpha). \]
The other inclusion follows by definition of $\B_-(\alpha)$ and \eqref{nest}.
\end{proof}

We have the following two consequences, the first one being Theorem \ref{b+}.

\begin{proof}[Proof of Theorem \ref{b+}]

\hskip 3cm

If $\B_+(\alpha) \subsetneq X$ then there exist ample $\R$-Cartier $\R$-divisors $A_1, \ldots, A_{n-k}$ on $X$ such that $\B_{\rm num}(\alpha - [A_1 \cdots A_{n-k}]) \subsetneq X$, so that $|\alpha - [A_1 \cdots A_{n-k}]|_{\rm num} \neq \emptyset$. Pick $e \in Z_k(X)_{\R}$ such that $e$ is effective and $[e] = \alpha - [A_1 \cdots A_{n-k}]$.  
Then, as in \cite[Lemma 2.12]{fl2}, $[A_1 \cdots A_{n-k}]$ is big, whence so is $\alpha = [A_1 \cdots A_{n-k}] + [e]$. Now assume that $\alpha$ is big. Then, given an ample Cartier divisor $A$ on $X$, there is an $\varepsilon > 0$ such that $\alpha - \varepsilon [A^{n-k}] \in \Eff_k(X)$, whence there is $e \in Z_k(X)_{\R}$ such that $e$ is effective and $\alpha = \varepsilon [A^{n-k}] + [e]$. Therefore
\[ \B_+(\alpha) \subseteq \B_{\rm num}(\alpha - [(\varepsilon^{\frac{1}{n-k}} A)^{n-k}]) \subseteq \Supp(e) \subsetneq X \]
and (i) follows. 
 
To see (ii) observe that, if $\alpha \in P_k(X)$, then there exist $A_1, \ldots, A_{n-k}$ ample $\R$-Cartier $\R$-divisors on $X$ and $\beta \in N_k(X)$ such that $\B_{\rm num} (\beta) = \emptyset$ and $\alpha = [A_1 \cdots A_{n-k}] + \beta$. Therefore $\B_+(\alpha) \subseteq \B_{\rm num}(\alpha - [A_1 \cdots A_{n-k}]) = \B_{\rm num} (\beta) = \emptyset$. On the other hand, assume that $\B_+(\alpha) = \emptyset$ and let $A$ be an ample Cartier divisor on $X$. By Proposition \ref{tutte}\eqref{riduzgen}(b) there is an 
$\varepsilon > 0$ such that $\B_+(\alpha) = \B_{\rm num}(\alpha - \varepsilon [A^{n-k}])$. Set $\beta = \alpha - \varepsilon [A^{n-k}]$. Then $\B_{\rm num}(\beta) = \emptyset$ and therefore $\alpha = [(\varepsilon^{\frac{1}{n-k}} A)^{n-k}] + \beta \in P_k(X)$.
\end{proof}

Using Theorem \ref{b+} we can make the union in Proposition \ref{tutte}\eqref{unb+} a countable one.

\begin{prop}
\label{unb+num}
Let $\alpha \in N_k(X)$ and let $A_1, \ldots, A_{n-k}$ be ample $\R$-Cartier $\R$-divisors on $X$. Then 
\[ \B_-(\alpha) = \bigcup\limits_{m \in \NN^+} \B_+(\alpha + \frac{1}{m} [A_1 \cdots A_{n-k}]). \]
\end{prop}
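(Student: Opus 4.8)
The plan is to sandwich $\B_-(\alpha)$ between the countable union on the right and (a superset that equals) $\B_-(\alpha)$ itself, using the results already in hand. For the inclusion $\bigcup_{m} \B_+(\alpha + \frac{1}{m}[A_1 \cdots A_{n-k}]) \subseteq \B_-(\alpha)$, I would simply invoke Proposition \ref{tutte}\eqref{unb+}: each set $\B_+(\alpha + \frac{1}{m}[A_1 \cdots A_{n-k}])$ is one of the sets appearing in the union over \emph{all} tuples of ample $\R$-Cartier $\R$-divisors, since $\frac1m A_1, \ldots, \frac1m A_{n-k}$ are ample $\R$-Cartier $\R$-divisors; hence it is contained in $\B_-(\alpha)$.

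For the reverse inclusion $\B_-(\alpha) \subseteq \bigcup_{m} \B_+(\alpha + \frac1m[A_1 \cdots A_{n-k}])$, I would start from Proposition \ref{tutte}\eqref{numerabile}, which already expresses $\B_-(\alpha)$ as the countable union $\bigcup_{m \in \NN^+} \B_{\rm num}(\alpha + \frac1m[A_1 \cdots A_{n-k}])$. So it suffices to show, for each fixed $m$, that $\B_{\rm num}(\alpha + \frac1m[A_1 \cdots A_{n-k}]) \subseteq \B_+(\alpha + \frac{1}{m'}[A_1 \cdots A_{n-k}])$ for some $m' \in \NN^+$ (depending on $m$). Write $\gamma = \alpha + \frac{1}{2m}[A_1\cdots A_{n-k}]$, so that $\alpha + \frac1m[A_1\cdots A_{n-k}] = \gamma + \frac{1}{2m}[A_1 \cdots A_{n-k}] = \gamma + [(\tfrac{1}{2m})^{\frac{1}{n-k}}A_1)\cdots((\tfrac{1}{2m})^{\frac1{n-k}}A_{n-k})]$; by Proposition \ref{tutte}\eqref{nest} applied to $\gamma$ together with the fact that $\B_+$ of a class dominates $\B_{\rm num}$ of that class, and more precisely by the chain $\B_{\rm num}(\gamma + [\cdots]) \subseteq \B_{\rm num}(\gamma) \subseteq \B_+(\gamma)$ from \eqref{nest}, we get $\B_{\rm num}(\alpha + \frac1m[A_1\cdots A_{n-k}]) \subseteq \B_+(\gamma) = \B_+(\alpha + \frac{1}{2m}[A_1 \cdots A_{n-k}])$. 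Taking $m' = 2m$ finishes this step, and the union over all $m$ on the left is absorbed into the union over all $m'$ on the right.

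The one point that needs a little care — and which I expect to be the main (minor) obstacle — is the bookkeeping in the last display: one must make sure that the perturbation by $\frac1m[A_1\cdots A_{n-k}]$ can indeed be split as a sum of \emph{the same} class $\frac{1}{2m}[A_1\cdots A_{n-k}]$ plus a class of the form $[B_1 \cdots B_{n-k}]$ with the $B_i$ ample, so that Proposition \ref{tutte}\eqref{nest} applies literally; this works because $\frac{1}{2m}[A_1 \cdots A_{n-k}] = [(\lambda A_1) \cdots (\lambda A_{n-k})]$ with $\lambda = (2m)^{-1/(n-k)} > 0$, and $\lambda A_i$ is again an ample $\R$-Cartier $\R$-divisor. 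Once this is observed, everything else is a direct combination of \eqref{numerabile}, \eqref{nest}, and \eqref{unb+}, with no new ideas required.
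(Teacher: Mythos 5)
Your proof is correct, but it takes a genuinely different route from the paper, and it is in fact simpler than the paper's (and even simpler than what you wrote).

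For the hard inclusion $\B_-(\alpha) \subseteq \bigcup_m \B_+(\alpha + \tfrac{1}{m}[A_1\cdots A_{n-k}])$, the paper works through Proposition \ref{tutte}\eqref{unb+}: given arbitrary ample $A'_1,\dots,A'_{n-k}$, it shows that $[A'_1\cdots A'_{n-k}] - \tfrac{1}{2m'}[A_1\cdots A_{n-k}] \in P_k(X)$ for $m'\gg 0$, invokes Theorem \ref{b+}(ii) to conclude that its $\B_+$ is empty, and then uses subadditivity of $\B_+$ (Proposition \ref{tutte}\eqref{unione2}) to absorb $\B_+(\alpha + [A'_1\cdots A'_{n-k}])$ into $\B_+(\alpha + \tfrac{1}{2m'}[A_1\cdots A_{n-k}])$. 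You instead start from Proposition \ref{tutte}\eqref{numerabile}, which already writes $\B_-(\alpha)$ as a countable union of $\B_{\rm num}$'s, and you pass from $\B_{\rm num}$ to $\B_+$. This bypasses Theorem \ref{b+} and the cone $P_k(X)$ entirely, making the proposition purely a consequence of Proposition \ref{tutte}; that is a real simplification in the logical dependence structure.

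One remark on your own execution: the $\gamma$-plus-doubling detour is unnecessary. Proposition \ref{tutte}\eqref{nest} is stated for an arbitrary class, so applied directly to $\alpha + \tfrac{1}{m}[A_1\cdots A_{n-k}]$ it already gives
\[
\B_{\rm num}\bigl(\alpha + \tfrac{1}{m}[A_1\cdots A_{n-k}]\bigr) \subseteq \B_+\bigl(\alpha + \tfrac{1}{m}[A_1\cdots A_{n-k}]\bigr),
\]
and taking the union over $m$ together with \eqref{numerabile} finishes the inclusion in one line, with no re-indexing from $m$ to $2m$. Your longer version is correct (the step $\B_{\rm num}(\gamma + [\cdots])\subseteq\B_{\rm num}(\gamma)$ you use does follow from \eqref{unione-ii}, even if it is not the literal statement of \eqref{nest}), but nothing is gained by it. The easy inclusion via \eqref{unb+}, rescaling $A_i$ by $m^{-1/(n-k)}$, is handled the same way in both your proof and the paper's.
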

\begin{proof}
Let $A'_1, \ldots, A'_{n-k}$ be any ample $\R$-Cartier $\R$-divisors on $X$. By Proposition \ref{tutte}\eqref{riduzgen}(b) we have that 
\[ \B_{\rm num}([A'_1 \cdots A'_{n-k}] -  \frac{1}{m'} [A_1 \cdots A_{n-k}]) = \emptyset \ \mbox{for} \ m' \gg 0. \]
whence
\[[A'_1 \cdots A'_{n-k}] - \frac{1}{2m'} [A_1 \cdots A_{n-k}] =  \]
\[= [(\frac{1}{(2m')^{\frac{1}{n-k}}} A_1) \cdots (\frac{1}{(2m')^{\frac{1}{n-k}}} A_{n-k})] +[A'_1 \cdots A'_{n-k}] - \frac{1}{m'} [A_1 \cdots A_{n-k}]  \in P_k(X) \]
and therefore 
\[\B_+([A'_1 \cdots A'_{n-k}] - \frac{1}{2m'} [A_1 \cdots A_{n-k}]) = \emptyset \]
by Theorem \ref{b+}(ii). Then Proposition \ref{tutte}\eqref{unione2} gives
\[ \B_+(\alpha + [A'_1 \cdots A'_{n-k}]) \subseteq \B_+(\alpha + \frac{1}{2m'} [A_1 \cdots A_{n-k}]) \cup \B_+([A'_1 \cdots A'_{n-k}] - \frac{1}{2m'} [A_1 \cdots A_{n-k}]) \]
\[ = \B_+(\alpha + \frac{1}{2m'} [A_1 \cdots A_{n-k}]) \]
whence, by Proposition \ref{tutte}\eqref{unb+}, we get $\B_-(\alpha) \subseteq \bigcup\limits_{m \in \NN^+} \B_+(\alpha + \frac{1}{m} [A_1 \cdots A_{n-k}])$. The other inclusion follows again by Proposition \ref{tutte}\eqref{unb+}.
\end{proof}

\section{Push-forward  of complete intersections} 
\label{pf}

In this section we will prove that to compute stable base loci we can add more flexibility and use, instead of perturbations by complete intersections of ample divisors, perturbations by push-forward, under finite flat maps, of complete intersections of ample divisors. It was proved in \cite[Cor.~2.5.2]{ng} that a $k$-cycle $Z$ is numerically trivial if and only if $A_1 \cdots A_{k+e} \cdot f^*Z=0$ for all projective flat maps $f : Y \to X$ of relative dimension $e$ and for all $A_1, \ldots, A_{k+e}$ ample divisors on $Y$. By taking general hyperplane sections, we can reduce to the case where $f$ is finite flat. 

We introduce and study the corresponding cones.

\begin{defi}
We denote by $CI_k(X)$ the convex cone  generated by all classes $[A_1 \cdots A_{n-k}]$, where  $A_1, \ldots, A_{n-k}$ are ample $\R$-Cartier $\R$-divisors on $X$.

We denote by $PCI_k(X)$ the convex cone  generated by all classes of form $f_* [A_1 \cdots A_{n-k}]$, where $f : Y \to X$ runs among all finite flat morphisms and $A_1, \ldots, A_{n-k}$ run among all ample $\R$-Cartier $\R$-divisors on $Y$. 
\end{defi}

Given $\alpha \in N_k(X)$ it is easy to see, using the same proof of Proposition \ref{tutte}\eqref{riduzgen}(a)-(b),
that
\[ \B_+(\alpha)= \bigcap\limits_{\gamma \in CI_k(X)} \B_{\rm num}(\alpha - \gamma) \ \hbox{and} \ \B_-(\alpha)= \bigcup\limits_{\gamma \in CI_k(X)} \B_{\rm num}(\alpha + \gamma). \]
We now want to show that the same can be done for $PCI_k(X)$ and that, in fact, we get the same stable base loci.

We set (temporarily) 
\[ \B^{\rm pci}_+(\alpha)= \bigcap\limits_{f, A_1, \ldots, A_{n-k}} \B_{\rm num}(\alpha - f_* [A_1 \cdots A_{n-k}]) \]
and
\[ \B^{\rm pci}_-(\alpha)= \bigcup\limits_{f, A_1, \ldots, A_{n-k}} \B_{\rm num}(\alpha + f_* [A_1 \cdots A_{n-k}]) \]

where $f$ runs among all finite flat morphisms $f : Y \to X$ and $A_1, \ldots, A_{n-k}$ run among all ample $\R$-Cartier $\R$-divisors on $Y$. 

We have

\begin{lemma}
\label{pfo2}
Let $\alpha \in N_k(X)$. Let $f : Y \to X$ be a finite flat morphism and let $A_1, \ldots, A_{n-k}$ be ample $\R$-Cartier $\R$-divisors on $Y$. Then 
\begin{itemize}
\item[(i)] $\B_{\rm num}(f_* [A_1 \cdots A_{n-k}]) =  \emptyset.$
\end{itemize}
Moreover for any finite flat morphism $f' : Y' \to X$ and for any ample $\R$-Cartier $\R$-divisors $A'_1, \ldots, A'_{n-k}$ on $Y'$, there exists an $\varepsilon_0 > 0$ such that for every $0 < \varepsilon \leq \varepsilon_0$ we have
\begin{itemize}
\item[(ii)] $f_* [A_1 \cdots A_{n-k}] - \varepsilon f'_* [A'_1 \cdots A'_{n-k}] \in PCI_k(X)$; 
\item[(iii)] $\B_{\rm num}(f_* [A_1 \cdots A_{n-k}]) - \varepsilon f'_* [A'_1 \cdots A'_{n-k}]) = \emptyset$; 
\item[(iv)] $\B^{\rm pci}_+(\alpha)=\B_{\rm num}(\alpha - \varepsilon f'_* [A'_1 \cdots A'_{n-k}]).$
\end{itemize}
\end{lemma}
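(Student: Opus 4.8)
The plan is to mirror, almost verbatim, the proof of Proposition \ref{tutte}\eqref{riduzgen}, replacing complete intersections of ample divisors on $X$ by push-forwards of such under finite flat maps. The only genuinely new input is that the class of push-forward complete intersections is stable under the operations needed to run that argument; everything else is formal.

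First I would prove (i). By definition of $PCI_k(X)$, the class $f_*[A_1\cdots A_{n-k}]$ lies in $PCI_k(X)$, so it suffices to show $\B_{\rm num}(\gamma)=\emptyset$ for every $\gamma\in PCI_k(X)$. Since $PCI_k(X)$ is by definition generated (as a convex cone) by classes of the form $g_*[B_1\cdots B_{n-k}]$, any $\gamma$ is a finite nonnegative combination of such; using Proposition \ref{tutte}\eqref{unione-ii} and \eqref{unione-i} it is then enough to show that each individual class $g_*[B_1\cdots B_{n-k}]$ has empty numerical stable base locus, i.e.\ is effective. But $[B_1\cdots B_{n-k}]$ is represented by an effective cycle on $Y$ (pass to $\Q$-divisors, clear denominators, and intersect general members), and its push-forward under the finite map $g$ is an effective cycle on $X$ whose class is $g_*[B_1\cdots B_{n-k}]$; hence $\B_{\rm num}(g_*[B_1\cdots B_{n-k}])=\emptyset$. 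This gives (i), and also shows $\B_{\rm num}(\gamma)=\emptyset$ for all $\gamma\in PCI_k(X)$, which I will reuse below.

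Next, (ii): I want $f_*[A_1\cdots A_{n-k}]-\varepsilon f'_*[A'_1\cdots A'_{n-k}]\in PCI_k(X)$ for small $\varepsilon$. The idea is the same bookkeeping as in Proposition \ref{tutte}\eqref{riduzgen}(a): reduce to the case where $f$ and $f'$ have the same source by passing to a common finite flat cover, or more simply write the difference, after a suitable rescaling, as a sum of push-forwards of intersections of ample divisors plus a leftover term that is itself in $PCI_k(X)$. Concretely, on $Y'$ replace $A'_i$ by $\frac{1}{m}(D_i)$ with $D_i$ ample and absorb the correction; pushing forward by the finite flat $f'$ and comparing with $f_*[A_1\cdots A_{n-k}]$ — whose push-forward structure one can also refine by pre-composing $f$ with a further finite flat cover so that the two source varieties match — one expresses the difference as a nonnegative combination of $(\text{finite flat})_*$ of products of ample divisors. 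Choosing $\varepsilon_0>0$ so that all the coefficients stay positive gives (ii). Then (iii) is immediate: $\B_{\rm num}$ of a class in $PCI_k(X)$ is empty by part (i), applied to $\varepsilon\le\varepsilon_0$ (note $f_*[A_1\cdots A_{n-k}]-\varepsilon f'_*[A'_1\cdots A'_{n-k}]$ stays in $PCI_k(X)$ for all such $\varepsilon$ by convexity, since it equals $(\varepsilon_0-\varepsilon)$ times a scalar multiple of $f'_*[\cdots]$ plus the $\varepsilon_0$-member).

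Finally (iv) follows formally from (iii) exactly as Proposition \ref{tutte}\eqref{riduzgen}(b) followed from \eqref{riduzgen}(a). Pick finitely many $g_i:Y_i\to X$ and ample $B_{i1},\dots,B_{i,n-k}$ on $Y_i$ with $\B^{\rm pci}_+(\alpha)=\bigcap_{i=1}^s \B_{\rm num}(\alpha-g_{i*}[B_{i1}\cdots B_{i,n-k}])$; by (iii) choose $\varepsilon_0>0$ so that $\B_{\rm num}(g_{i*}[B_{i1}\cdots B_{i,n-k}]-\varepsilon f'_*[A'_1\cdots A'_{n-k}])=\emptyset$ for all $i$ and all $0<\varepsilon\le\varepsilon_0$; then writing $\alpha-\varepsilon f'_*[A'_1\cdots A'_{n-k}]=(\alpha-g_{i*}[B_{i1}\cdots B_{i,n-k}])+(g_{i*}[B_{i1}\cdots B_{i,n-k}]-\varepsilon f'_*[A'_1\cdots A'_{n-k}])$ and applying Proposition \ref{tutte}\eqref{unione-ii} gives $\B_{\rm num}(\alpha-\varepsilon f'_*[A'_1\cdots A'_{n-k}])\subseteq \B_{\rm num}(\alpha-g_{i*}[B_{i1}\cdots B_{i,n-k}])$ for every $i$, hence $\subseteq\B^{\rm pci}_+(\alpha)$; the reverse inclusion is immediate from the definition of $\B^{\rm pci}_+$.

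The main obstacle I expect is the precise form of (ii): making the "common finite flat cover" or the coefficient-juggling genuinely produce a combination that literally lies in the cone $PCI_k(X)$ rather than merely in its closure. The key technical point is that a fiber product (or an irreducible component, with normalization) of two finite flat covers of $X$ is again finite flat over $X$, so that push-forwards from different $Y$'s can be compared on a single source; once that is in hand the argument is the same elementary positivity-of-coefficients estimate as in Proposition \ref{tutte}\eqref{riduzgen}(a). Parts (i), (iii), (iv) are then routine.
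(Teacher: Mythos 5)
Your argument for part (i) has a genuine gap: you assert that a class in $PCI_k(X)$ "has empty numerical stable base locus, i.e.\ is effective," and you then conclude $\B_{\rm num}(g_*[B_1\cdots B_{n-k}])=\emptyset$ from the mere existence of one effective representative. But exhibiting a single effective cycle $e$ with $[e]=\gamma$ only shows $\B_{\rm num}(\gamma)\subseteq\Supp(e)$, i.e.\ $\B_{\rm num}(\gamma)\subsetneq X$. To show $\B_{\rm num}(\gamma)=\emptyset$ you must, for \emph{every} point $x\in X$, exhibit an effective representative whose support avoids $x$. The ingredient you are missing — and the one the paper uses — is that $f$ (or $g$) is finite, so $f^{-1}(x)$ is a finite set; one then chooses effective $\Q$-divisors $E_i\sim_{\Q}A_i$ on $Y$ that miss the finitely many points of $f^{-1}(x)$, so that $f(E_1\cap\ldots\cap E_{n-k})$ avoids $x$ and represents (a positive multiple of) $f_*[A_1\cdots A_{n-k}]$. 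Without invoking finiteness of the fibers, "general members" on $Y$ need not push forward to a cycle avoiding a prescribed point of $X$.

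For parts (ii)--(iv) your route agrees with the paper's: for (ii) you pass to the fiber product $Y\times_X Y'$ (which you correctly note is finite and flat over $X$), pull both families of divisors back, run the coefficient estimate of Proposition \ref{tutte}\eqref{riduzgen}(a) there, and push forward; (iii) then follows from (i), (ii), and Proposition \ref{tutte}\eqref{unione-i}--\eqref{unione-ii}; and (iv) is the same formal argument as in Proposition \ref{tutte}\eqref{riduzgen}(b). These parts are fine once (i) is repaired as above.
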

\begin{proof}
By Proposition \ref{tutte}\eqref{unione-i}-\eqref{unione-ii} it is enough to prove (i) when $A_1, \ldots, A_{n-k}$ are ample divisors on $Y$. Now let $x \in X$, so that that $f^{-1}(x)$ is a finite set and we can find effective divisors $E_i \sim_{\mathbb Q} A_i$ such that $E_i \cap f^{-1}(x) =  \emptyset$. Then for some $d>0$ we have that
$$f_* [A_1 \cdots A_{n-k}] = d [f(E_1 \cap \ldots \cap E_{n-k})]$$
and of course $x \not\in f(E_1 \cap \ldots \cap E_{n-k})$. This proves (i).

To see (ii) consider the following commutative diagram
$$\xymatrix{Y \times_X Y' \ar[d]^{\pi} \ar[dr]^{g} \ar[r]^{\ \ \ \pi'} & Y' \ar[d]^{f'} \\ Y \ar[r]^{f} & X}$$
where $g:= f \circ \pi = f' \circ \pi'$. By \cite[Prop.~III.9.2(b)]{h} it follows that $\pi$ and $\pi'$ are flat and by \cite[Prop.~6.1.5(iii)]{g} they are finite, whence also $g$ is finite by \cite[Lemma 29.43.5]{st} and flat by \cite[Prop.~III.9.2(c)]{h}. Then $\pi^*A_i$ and $(\pi')^*A'_i$ are ample for all $1 \le i \le n-k$. As in the proof of Proposition \ref{tutte}\eqref{riduzgen}(a), there exists an $\varepsilon_1 > 0$ such that there are ample $\R$-Cartier $\R$-divisors $A''_{ij}$ on $Y \times_X Y'$ satisfying
$$[\pi^*A_1 \cdots \pi^* A_{n-k}] - \varepsilon_1 [(\pi')^*A'_1 \cdots (\pi')^*A'_{n-k}] = \sum\limits_j [A''_{1j} \cdots A''_{n-k,j}].$$
Hence for some $d>0, d'>0$ we have that
$$f_* [A_1 \cdots A_{n-k}] - \frac{\varepsilon_1 d'}{d} f'_* [A'_1 \cdots A'_{n-k}] = \frac{1}{d} g_*[\pi^*A_1 \cdots \pi^* A_{n-k}] - \frac{\varepsilon_1}{d}g_* [(\pi')^*A'_1 \cdots (\pi')^*A'_{n-k}] =$$
$$= \sum\limits_j \frac{1}{d} g_*[A''_{1j} \cdots A''_{n-k,j}]$$
and then, setting $\varepsilon_0 = \frac{\varepsilon_1 d'}{d}$, (ii) follows and so does (iii) by (i) and Proposition \ref{tutte}\eqref{unione-i}-\eqref{unione-ii}.
Finally to show (iv) choose finite flat morphisms $f_i : Y_i \to X$ and ample $\R$-Cartier $\R$-divisors $A_{i1},  \ldots, A_{i, n-k}$ on $Y_i$, $1 \leq i \leq s$, such that 
 \[ \B^{\rm pci}_+(\alpha)= \bigcap\limits_{i=1}^s \B_{\rm num}(\alpha - (f_i)_*[A_{i1} \cdots A_{i, n-k}]). \]
 By (ii) we can choose $\varepsilon > 0$ such that  
 \[ \B_{\rm num} ((f_i)_* [A_{i1} \cdots A_{i,n-k}] - \varepsilon f'_*[A'_1 \cdots A'_{n-k}]) = \emptyset \ \mbox{for all} \ 1 \leq i \leq s. \]
 Therefore, using Proposition \ref{tutte}\eqref{unione-ii}, for all $1 \leq i \leq s$ we have
\[ \B_{\rm num}(\alpha - \varepsilon f'_*[A'_1 \cdots A'_{n-k}]) \subseteq \B_{\rm num}(\alpha - (f_i)_*[A_{i1} \cdots A_{i, n-k}]) \cup \B_{\rm num}((f_i)_*[A_{i1} \cdots A_{i, n-k}] - \varepsilon f'_*[A'_1 \cdots A'_{n-k}]) =$$
$$=  \B_{\rm num}(\alpha - (f_i)_*[A_{i1} \cdots A_{i, n-k}])\]
hence $\B_{\rm num}(\alpha - \varepsilon f'_*[A'_1 \cdots A'_{n-k}]) \subseteq \B^{\rm pci}_+(\alpha)$. Since the other inclusion follows by definition of $\B^{\rm pci}_+(\alpha)$, we get (iv).
\end{proof}

\begin{prop}
\label{pfo}
Let $\alpha \in N_k(X)$. Then
\[ \B_+(\alpha) = \bigcap\limits_{f, A_1, \ldots, A_{n-k}} \B_{\rm num}(\alpha - f_* [A_1 \cdots A_{n-k}]) \]

\[ \B_-(\alpha)= \bigcup\limits_{f, A_1, \ldots, A_{n-k}} \B_{\rm num}(\alpha + f_*[A_1 \cdots A_{n-k}]) \]
where $f$ runs among all finite flat morphisms $f : Y \to X$ and $A_1, \ldots, A_{n-k}$ run among all ample $\R$-Cartier $\R$-divisors on $Y$. 
\end{prop}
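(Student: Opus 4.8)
The plan is to prove each of the two equalities by a pair of inclusions, one of which is immediate and the other a formal consequence of Lemma~\ref{pfo2} together with Proposition~\ref{tutte}. For the easy direction, note that $\id_X : X \to X$ is a finite flat morphism and that $X$ carries ample divisors, so taking $f = \id_X$ shows that every class $[A_1 \cdots A_{n-k}]$ with $A_i$ ample $\R$-Cartier $\R$-divisors on $X$ occurs among the push-forward classes $f_*[A_1 \cdots A_{n-k}]$. Hence the intersection defining $\B^{\rm pci}_+(\alpha)$ ranges over a larger index set than the one defining $\B_+(\alpha)$, giving $\B^{\rm pci}_+(\alpha) \subseteq \B_+(\alpha)$, and dually $\B_-(\alpha) \subseteq \B^{\rm pci}_-(\alpha)$.

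For the augmented base locus it remains to prove $\B_+(\alpha) \subseteq \B^{\rm pci}_+(\alpha)$. I would fix an ample Cartier divisor $A$ on $X$. By Lemma~\ref{pfo2}(iv), applied with $f' = \id_X$ and $A'_1 = \cdots = A'_{n-k} = A$, there is $\varepsilon_1 > 0$ such that $\B^{\rm pci}_+(\alpha) = \B_{\rm num}(\alpha - \varepsilon [A^{n-k}])$ for all $0 < \varepsilon \le \varepsilon_1$, while by Proposition~\ref{tutte}\eqref{riduzgen}(b) there is $\varepsilon_2 > 0$ such that $\B_+(\alpha) = \B_{\rm num}(\alpha - \varepsilon [A^{n-k}])$ for all $0 < \varepsilon \le \varepsilon_2$. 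Choosing any $0 < \varepsilon \le \min\{\varepsilon_1, \varepsilon_2\}$ then yields $\B^{\rm pci}_+(\alpha) = \B_+(\alpha)$, which is the first equality.

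For the restricted base locus it remains to prove $\B^{\rm pci}_-(\alpha) \subseteq \B_-(\alpha)$, i.e. that $\B_{\rm num}(\alpha + f_*[A_1 \cdots A_{n-k}]) \subseteq \B_-(\alpha)$ for every finite flat $f : Y \to X$ and all ample $\R$-Cartier $\R$-divisors $A_1, \ldots, A_{n-k}$ on $Y$. I would fix such data and an ample Cartier divisor $A$ on $X$, and apply Lemma~\ref{pfo2}(iii) (with $f' = \id_X$, $A'_i = A$) to get $\varepsilon > 0$ with $\B_{\rm num}(f_*[A_1 \cdots A_{n-k}] - \varepsilon [A^{n-k}]) = \emptyset$. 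Then from
\[ \alpha + f_*[A_1 \cdots A_{n-k}] = \big(\alpha + \varepsilon [A^{n-k}]\big) + \big(f_*[A_1 \cdots A_{n-k}] - \varepsilon [A^{n-k}]\big) \]
and Proposition~\ref{tutte}\eqref{unione-ii} we obtain $\B_{\rm num}(\alpha + f_*[A_1 \cdots A_{n-k}]) \subseteq \B_{\rm num}(\alpha + \varepsilon [A^{n-k}])$. Since $\varepsilon [A^{n-k}] = [(\varepsilon^{\frac{1}{n-k}} A)^{n-k}]$ is a complete intersection of ample $\R$-Cartier $\R$-divisors on $X$, the right-hand side is contained in $\B_-(\alpha)$ by Definition~\ref{luoghibase}; taking the union over all $f$ and $A_i$ completes the second equality.

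I do not expect a genuine obstacle: the geometric content — producing, via the fibre product $Y \times_X Y'$, a representative of a difference of two push-forward complete intersections that again lies in $PCI_k(X)$ after subtracting a small ample complete intersection — is already encapsulated in Lemma~\ref{pfo2}. The only mild subtlety is the bookkeeping, namely selecting a single perturbation parameter $\varepsilon$ that is simultaneously admissible for the reduction formula (Proposition~\ref{tutte}\eqref{riduzgen}(b), respectively Lemma~\ref{pfo2}(iv)) and for the $PCI_k(X)$-membership statements of Lemma~\ref{pfo2}(ii)--(iii).
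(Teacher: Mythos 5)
Your proposal is correct and follows essentially the same strategy as the paper: both establish the first equality by using Proposition \ref{tutte}\eqref{riduzgen}(b) and Lemma \ref{pfo2}(iv) (with $f'=\id_X$) to reduce $\B_+(\alpha)$ and $\B^{\rm pci}_+(\alpha)$ to the same $\B_{\rm num}(\alpha - \varepsilon[A^{n-k}])$, and both obtain $\B^{\rm pci}_-(\alpha)\subseteq\B_-(\alpha)$ by subtracting a small ample complete intersection from $f_*[A_1\cdots A_{n-k}]$. Your treatment of the $\B_-$ inclusion is in fact a bit more direct than the paper's, applying Proposition \ref{tutte}\eqref{unione-ii} straight to $\B_{\rm num}(\alpha + f_*[A_1\cdots A_{n-k}])$ rather than first enlarging to $\B_+(\alpha + f_*[A_1\cdots A_{n-k}])$ and then invoking the reduction formula, so you avoid one application of Proposition \ref{tutte}\eqref{riduzgen}(b).
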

\begin{proof}
By Proposition \ref{tutte}\eqref{riduzgen}(b), there exists an $\varepsilon_0 > 0$ such that for all $0 < \varepsilon \le \varepsilon_0$ we have that $\B_+(\alpha) = \B_{\rm num}(\alpha - \varepsilon [A_1 \cdots A_{n-k}])$. By Lemma \ref{pfo2}(iii) there exists an $\varepsilon_1 > 0$ such that for all $0 < \varepsilon \le \varepsilon_1$ we have that $\B^{\rm pci}_+(\alpha)=\B_{\rm num}(\alpha - \varepsilon [A_1 \cdots A_{n-k}])$. Choosing $\varepsilon = \min\{\varepsilon_0, \varepsilon_1\}$ we conclude that $\B_+(\alpha)=\B^{\rm pci}_+(\alpha)$.

As for $\B_-(\alpha)$, by definition it follows that $\B_-(\alpha) \subseteq \B^{\rm pci}_-(\alpha)$. Let now $A$ be an ample divisor on $X$. Given any finite flat morphism $f : Y \to X$ and any ample $\R$-Cartier $\R$-divisors $A_1, \ldots, A_{n-k}$ on $Y$, we have by Lemma \ref{pfo2}(ii) that there exists an $\varepsilon_0 > 0$ such that   if $0 < \varepsilon \leq \frac{\varepsilon_0}{2}$ we have that
\begin{equation}
\label{zz}
\B_{\rm num}(f_* [A_1 \cdots A_{n-k}]) - 2\varepsilon [A^{n-k}]) = \emptyset.
\end{equation}

Moreover by Proposition \ref{tutte}\eqref{riduzgen}(b) there exists an $\varepsilon_1 > 0$ such that for every $0 < \varepsilon \leq \varepsilon_1$ we have that
\begin{equation}
\label{zz2}
\B_+(\alpha + f_*[A_1 \cdots A_{n-k}]) = \B_{\rm num}(\alpha + f_*[A_1 \cdots A_{n-k}] - \varepsilon [A^{n-k}]).
\end{equation}
Then for $\varepsilon = \min\{\frac{\varepsilon_0}{2},\varepsilon_1\}$ we have, using Proposition \ref{tutte}\eqref{nest}, \eqref{zz2}, Proposition \ref{tutte}\eqref{unione-ii} and \eqref{zz}, that
\[\B_{\rm num}(\alpha + f_*[A_1 \cdots A_{n-k}]) \subseteq \B_+(\alpha + f_*[A_1 \cdots A_{n-k}])=\B_{\rm num}(\alpha + f_*[A_1 \cdots A_{n-k}] - \varepsilon [A^{n-k}])=\]
\[= \B_{\rm num}(\alpha + \varepsilon [A^{n-k}] + f_*[A_1 \cdots A_{n-k}] - 2\varepsilon [A^{n-k}]) \subseteq \B_{\rm num}(\alpha + \varepsilon [A^{n-k}]) \subseteq \B_-(\alpha).\]
Therefore also $\B^{\rm pci}_-(\alpha) \subseteq \B_-(\alpha)$ and we are done.
\end{proof}

Again, using Proposition \ref{tutte}\eqref{unione-ii},\eqref{riduzgen}(b), Lemma \ref{pfo2} and Proposition \ref{pfo}, it follows easily that

\begin{remark} Let $\alpha \in N_k(X)$. Then
\label{pert}
\[ \B_+(\alpha)= \bigcap\limits_{\gamma \in PCI_k(X)} \B_{\rm num}(\alpha - \gamma). \]
\[ \B_-(\alpha)= \bigcup\limits_{\gamma \in PCI_k(X)} \B_{\rm num}(\alpha + \gamma). \]
\end{remark}

As for the cones we get

\begin{prop}
\label{coni3}
We have
\[P_k(X)=CI_k(X)+NSA_k(X)=PCI_k(X)+NSA_k(X).\]
\end{prop}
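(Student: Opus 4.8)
The plan is to deduce both equalities from the chain of inclusions
\[ PCI_k(X)+NSA_k(X)\ \subseteq\ P_k(X)\ \subseteq\ CI_k(X)+NSA_k(X)\ \subseteq\ PCI_k(X)+NSA_k(X), \]
where $NSA_k(X)=\{\beta\in N_k(X):\B_{\rm num}(\beta)=\emptyset\}$; note that this is a convex cone by Proposition \ref{tutte}\eqref{unione-i} and \eqref{unione-ii}. Two of the three inclusions are formal. The middle one is immediate from the definition of $P_k(X)$: if $\alpha=[A_1\cdots A_{n-k}]+\beta$ with the $A_i$ ample $\R$-Cartier $\R$-divisors and $\B_{\rm num}(\beta)=\emptyset$, then $[A_1\cdots A_{n-k}]\in CI_k(X)$ and $\beta\in NSA_k(X)$, so $\alpha\in CI_k(X)+NSA_k(X)$. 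The last one holds because $CI_k(X)\subseteq PCI_k(X)$, which I would get by taking $f=\id_X$ (which is finite and flat) in the definition of $PCI_k(X)$, so that every generator $[A_1\cdots A_{n-k}]$ of $CI_k(X)$ is a generator of $PCI_k(X)$.

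The substance is therefore the inclusion $PCI_k(X)+NSA_k(X)\subseteq P_k(X)$, and this is where I would do the work. First I would fix an ample Cartier divisor $A$ on $X$. Given $\alpha=\gamma+\beta$ with $\gamma\in PCI_k(X)$ and $\beta\in NSA_k(X)$, I would write $\gamma=\sum_{j=1}^m c_j\,(f_j)_*[A_{j1}\cdots A_{j,n-k}]$ with $c_j>0$, each $f_j:Y_j\to X$ finite flat and each $A_{ji}$ an ample $\R$-Cartier $\R$-divisor on $Y_j$. Applying Lemma \ref{pfo2}(iii) to each summand, with $f'=\id_X$ and $A'_1=\dots=A'_{n-k}=A$, I get for every $j$ an $\eta_j>0$ such that $\B_{\rm num}\big((f_j)_*[A_{j1}\cdots A_{j,n-k}]-\eta\,[A^{n-k}]\big)=\emptyset$ for all $0<\eta\le\eta_j$. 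Choosing $\varepsilon>0$ smaller than each $m\,c_j\,\eta_j$, I would split
\[ \gamma-\varepsilon[A^{n-k}]=\sum_{j=1}^m c_j\Big((f_j)_*[A_{j1}\cdots A_{j,n-k}]-\frac{\varepsilon}{m c_j}[A^{n-k}]\Big), \]
each summand of which lies in $NSA_k(X)$; since $NSA_k(X)$ is a convex cone, $\delta:=\gamma-\varepsilon[A^{n-k}]\in NSA_k(X)$, and hence $\delta+\beta\in NSA_k(X)$ as well. Using $\varepsilon[A^{n-k}]=[(\varepsilon^{1/(n-k)}A)^{n-k}]$ (the scaling trick already used in the proof of Theorem \ref{b+}), with $\varepsilon^{1/(n-k)}A$ an ample $\R$-Cartier $\R$-divisor, this yields $\alpha=[(\varepsilon^{1/(n-k)}A)^{n-k}]+(\delta+\beta)\in P_k(X)$, which closes the chain.

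I expect the last inclusion to be the only real obstacle: one must extract from an arbitrary element of $PCI_k(X)$ a genuine complete intersection of ample $\R$-Cartier $\R$-divisors \emph{on $X$ itself}, not merely a push-forward from a cover, while absorbing the remainder into $NSA_k(X)$; Lemma \ref{pfo2}(iii) is exactly the tool that makes this possible, and everything else is cone bookkeeping. As a by-product worth recording, Lemma \ref{pfo2}(i) together with Proposition \ref{tutte}\eqref{unione-i},\eqref{unione-ii} shows $CI_k(X)\subseteq PCI_k(X)\subseteq NSA_k(X)$, so all three cones in the statement are in fact contained in $NSA_k(X)$, in accordance with Proposition \ref{tutte}\eqref{nest}.
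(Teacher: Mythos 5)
Your proof is correct, and it takes a genuinely different route from the paper's. The paper first establishes $P_k(X)=CI_k(X)+NSA_k(X)$ from the definition, then proves the containment $PCI_k(X)\subseteq P_k(X)$ by a detour through base loci: it invokes Theorem \ref{b+}(ii) to reduce to showing $\B_+(\alpha)=\emptyset$ for $\alpha\in PCI_k(X)$, uses Proposition \ref{tutte}\eqref{unione2} to reduce further to generators $f_*[A_1\cdots A_{n-k}]$, and then appeals to Proposition \ref{pfo} together with Lemma \ref{pfo2} to identify $\B_+$ with $\B^{\rm pci}_+$ and conclude; finally it closes the argument by the cone algebra $PCI_k(X)+NSA_k(X)\subseteq P_k(X)+NSA_k(X)=CI_k(X)+NSA_k(X)\subseteq PCI_k(X)+NSA_k(X)$. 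You instead prove the key inclusion $PCI_k(X)+NSA_k(X)\subseteq P_k(X)$ directly: you decompose a generic $\gamma\in PCI_k(X)$ as a finite positive combination of classes $(f_j)_*[A_{j1}\cdots A_{j,n-k}]$, apply Lemma \ref{pfo2}(iii) termwise with $f'=\id_X$ to subtract a small common multiple of $[A^{n-k}]$ while each term stays in $NSA_k(X)$, and then absorb everything into the defining decomposition of $P_k(X)$ via the scaling $\varepsilon[A^{n-k}]=[(\varepsilon^{1/(n-k)}A)^{n-k}]$. Your version is more elementary and self-contained: it avoids both Theorem \ref{b+}(ii) and Proposition \ref{pfo}, relying only on Lemma \ref{pfo2}(iii) and the convexity of $NSA_k(X)$. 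The paper's argument is less hands-on but exposes more clearly the conceptual content, namely that $P_k(X)$ is exactly the locus where $\B_+$ vanishes and that $\B_+$ is insensitive to whether one perturbs by complete intersections on $X$ or by push-forwards from finite flat covers.
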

\begin{proof}
The first equality follows by definition of $P_k(X)$ and convexity of $NSA_k(X)$. To see the second we first show that 
\begin{equation}
\label{cont}
PCI_k(X) \subseteq P_k(X).
\end{equation}
By Theorem \ref{b+}(ii) it is enough to prove that $\B_+(\alpha) = \emptyset$ for every $\alpha \in PCI_k(X)$. On the other hand, by Proposition \ref{tutte}\eqref{unione2}, it is sufficient to show that $\B_+(\alpha) = \emptyset$ when $\alpha = f_* [A_1 \cdots A_{n-k}]$, where $f : Y \to X$ is a finite flat morphism and $A_1, \ldots, A_{n-k}$ are ample $\R$-Cartier $\R$-divisors on $Y$. By Proposition \ref{pfo} and Lemma \ref{pfo2}(iii) there exists an  $\varepsilon$ such that $0 < \varepsilon < 1$ and $\B_+(\alpha) = \B_{\rm num}(\alpha - \varepsilon f_*[A_1 \cdots A_{n-k}]) = \B_{\rm num}((1-\varepsilon) f_*[A_1 \cdots A_{n-k}])$ and the latter is empty by Proposition \ref{tutte}\eqref{unione-i} and Lemma \ref{pfo2}(i). This proves \eqref{cont}.

As $NSA_k(X)$ is convex, we get by \eqref{cont} that
\[ PCI_k(X)+NSA_k(X) \subseteq P_k(X) + NSA_k(X) = CI_k(X)+NSA_k(X)  \subseteq PCI_k(X)+NSA_k(X)\]
and we are done.
\end{proof}

On a given variety one can consider various cones of positive cycles. For example, as in Lemma \ref{vb} below, one can consider the convex cones generated by Chern classes, or dual Segre classes or Schur classes or even monomials in Schur classes of several sufficiently positive vector bundles. Aside from what we know from Lemma \ref{vb}, we can observe that the cone of dual Segre classes of ample vector bundles is a subcone of $PCI_k(X)$. As a matter of fact, any dual Segre class $s_{n-k}(\E^\vee)$ of an ample vector bundle $\E$ is push-forward, from $P(\E)$ of a power of the ample line bundle $\O_{P(\E)}(1)$. By cutting down with hyperplanes one obtains that $s_{n-k}(\E^\vee) \in PCI_k(X)$.

We do not know if this holds more generally for other types of Schur classes.

\section{Proof of Theorem \ref{aperto}} 
\label{pfthm1}

We will give two different proofs of Theorem \ref{aperto}. The first one holds for every $k$ and uses the cone $PCI_k(X)$. The other one holds only for $k < \frac{n}{2}+1$ or $k=n-1$, but it has the advantage to introduce a method of studying cycles that makes them resemble divisors. This method might be useful in the study of higher codimensional cycles. 

Here is the first proof of Theorem \ref{aperto}.

\begin{proof}
By Proposition \ref{tutte}\eqref{unione-i}-\eqref{unione-ii} we see that $PCI_k(X)$ is a convex cone. Now assume that $X$ is smooth. It is easily seen that any convex cone satisfying a property such as the one in Lemma \ref{pfo2}(ii), coincides with its relative interior. Therefore $PCI_k(X)$ is open in its linear span in $N_k(X)$. On the other hand, as mentioned in the beginning of section \ref{pf}, it follows by [Ng, Cor. 2.5.2] that $PCI_k(X)$ generates $N_{n-k}(X)^{\vee} = N_k(X)$. By Proposition \ref{coni3} we have, by a simple fact of convex cones,
that
\[P_k(X)=PCI_k(X)+NSA_k(X) = \Int(NSA_k(X))\]
and therefore $P_k(X)$ is open in $N_k(X)$, hence also full-dimensional.
\end{proof}

To give the second proof, we first need three lemmas.

\begin{lemma}
\label{vb}
Let $\E$ be a vector bundle of rank $r$ and let $A$ be an ample Cartier divisor on $X$.
\begin{itemize}
\item[(i)] If $\E$ is globally generated, then $\B_{\rm num}([c_{n-k}(\E)]) = \emptyset$; 
\item[(ii)] If $\E(-A)$ is globally generated and $r \ge n-k$, then $[c_{n-k}(\E)] \in P_k(X)$;
\item[(iii)] Assertions $(i)$ and $(ii)$ hold more generally for Schur classes of $\E$ if $X$ is smooth and we are in characteristic zero.
\end{itemize}
\end{lemma}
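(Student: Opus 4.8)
The plan is to prove the three parts of Lemma \ref{vb} in order, reducing (ii) to (i) and deducing (iii) from the Schur-class analogue of the underlying geometric input.

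\textbf{Part (i).} First I would recall the standard fact that if $\E$ is globally generated of rank $r$, then for any $0 \le j \le r$ the Chern class $c_j(\E)$ is represented by an effective cycle obtained as the degeneracy locus of $r-j+1$ general global sections; concretely, $c_{n-k}(\E)$ is the class of the locus where $r - (n-k) + 1$ general sections of $\E$ fail to be linearly independent. Because the sections are general, this degeneracy locus can be taken to avoid any fixed point of $X$ (when $r \ge n-k$ the expected codimension is $n-k$, and one uses a Bertini-type/Kleiman-type genericity argument to move it off a given point), so the intersection over all such representatives of their supports is empty. Hence $\B_{\rm num}([c_{n-k}(\E)]) = \emptyset$. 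The one technical point is to handle the case $r < n-k$, where $c_{n-k}(\E) = 0$ and the claim is trivial since $0 = [c_{n-k}(\E)]$ has the empty cycle as an effective representative — so $\B_{\rm num}(0) = \emptyset$ by convention (or one restricts to $r \ge n-k$ from the start, as part (ii) does anyway).

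\textbf{Part (ii).} Here I would use the splitting $\E(-A)$ globally generated together with the Whitney formula. Write $c(\E) = c(\E(-A)) \cdot c(\O(A))^{?}$ — more precisely, since $\E = \E(-A) \otimes \O(A)$, one has the classical formula $c_j(\E) = \sum_{i=0}^{j} \binom{r-i}{j-i} c_i(\E(-A)) \, A^{j-i}$. Isolating the top term $i=0$ gives $c_{n-k}(\E) = \binom{r}{n-k} A^{n-k} + \sum_{i=1}^{n-k} \binom{r-i}{n-k-i} c_i(\E(-A)) A^{n-k-i}$. Since $\binom{r}{n-k} > 0$ (using $r \ge n-k$), the class $\beta := [c_{n-k}(\E)] - \binom{r}{n-k}[A^{n-k}]$ equals $\sum_{i \ge 1} \binom{r-i}{n-k-i} [c_i(\E(-A)) \cdot A^{n-k-i}]$, and I would argue $\B_{\rm num}(\beta) = \emptyset$: each summand $[c_i(\E(-A)) \cdot A^{n-k-i}]$ with $i \ge 1$ is an intersection of the globally-generated-bundle Chern class $c_i(\E(-A))$ (whose degeneracy-locus representative can be moved off any point when $i \le r$, as in part (i)) with a power of the ample $A$ (whose zero-locus representative can likewise be chosen through general members avoiding a point), so by Proposition \ref{tutte}\eqref{unione-ii} the numerical base locus of the sum is contained in the union of empty sets. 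Writing $\binom{r}{n-k}[A^{n-k}] = [(\binom{r}{n-k}^{1/(n-k)} A)^{n-k}]$ as an intersection of ample $\R$-divisors, we get $[c_{n-k}(\E)] = [(cA)^{n-k}] + \beta$ with $\B_{\rm num}(\beta) = \emptyset$, i.e. $[c_{n-k}(\E)] \in P_k(X)$ by definition.

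\textbf{Part (iii).} Finally, for Schur classes in characteristic zero on a smooth $X$, I would invoke the positivity results for Schur polynomials of (nef/globally generated) bundles — the analogue of the degeneracy-locus description used in (i) is provided by the Fulton–Lazarsfeld / Kempf–Laksov theory, valid in characteristic zero via generic smoothness and the fact that Schur classes of globally generated bundles are represented by effective cycles (images of Schubert-type loci in flag bundles) that can be moved off any point. With this in hand, part (i)'s argument for $s_\lambda(\E)$ goes through verbatim, and part (ii) follows by the same binomial-expansion/Whitney-type manipulation, since $s_\lambda(\E(-A))$ expands into Schur classes of $\E(-A)$ times nonnegative powers of $A$ with nonnegative (in fact combinatorially explicit, Littlewood–Richardson-type) coefficients, the leading coefficient of $A^{n-k}$ being positive. \textbf{The main obstacle} I anticipate is making the "move the degeneracy locus off a fixed point" step fully rigorous over an arbitrary algebraically closed field in parts (i)–(ii) (where only Chern classes appear, so no smoothness or characteristic hypothesis is available): this requires a careful Kleiman-transversality or direct general-sections argument on a possibly singular $X$, and in characteristic $p$ one must avoid generic-smoothness shortcuts — handling this correctly is the crux, while the passage to Schur classes in (iii) is comparatively formal once the characteristic-zero effectivity-and-movability input is cited.
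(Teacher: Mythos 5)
Your proof is correct and follows essentially the same route as the paper: part (i) via general-sections degeneracy loci (with the same observation that general sections of a globally generated bundle are linearly independent at a fixed $x$, valid over any algebraically closed field), part (ii) via the Whitney-type expansion of $c_{n-k}(\E)$ in terms of $A$ and Chern classes of $\E(-A)$ (your index-$i$ formula is the paper's index-$j$ formula after $j = n-k-i$), and part (iii) via the effective representability of Schur classes of globally generated bundles plus Kleiman transversality in characteristic zero, together with the Schur-class analogue of the binomial expansion. The only cosmetic difference is that the paper realizes the Schur-class representative explicitly as the preimage under $f:X\to\mathbb{G}$ of a general Schubert translate and cites Pragacz/Ross--Toma for the expansion coefficients, whereas you invoke the Fulton--Lazarsfeld/Kempf--Laksov formalism and a Littlewood--Richardson-type expansion; these are the same underlying facts.
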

\begin{proof}
To see (i), the assertion being obvious if $r < n-k$, assume that $r \ge n-k$. Let $x \in X$ and pick general sections $\tau_0, \ldots, \tau_{r - n + k} \in H^0(\E)$. Then they are linearly independent in $x$ and therefore $x$ does not belong to their degeneracy locus, which, as is well-known  \cite[Examples 14.3.2 and 14.4.3]{fu}, \cite[Lemma 5.2]{eh}, represents $[c_{n-k}(\E)]$. This proves (i). To see (ii) observe that we can write
\[ [c_{n-k}(\E)] = \binom{r}{n-k}[A^{n-k}] + \sum\limits_{j=0}^{n-k-1} \binom{r-n+k+j}{j}[A^j c_{n-k-j}(\E(-A))] \]
and, using Proposition \ref{tutte}\eqref{unione-i}-\eqref{unione-ii}, we see that this belongs to $P_k(X)$ since (i) implies that
\[\B_{\rm num}([A^j c_{n-k-j}(\E(-A))]) = \emptyset \ \mbox{for all} \ 0 \le j \le n-k-1. \] 
This gives (ii). Let now $\lambda = (\lambda_1,\ldots,\lambda_{n-k})$ be a partition of $n-k$ with $r \ge \lambda_1 \ge \ldots  \ge \lambda_{n-k} \ge 0$. As for the first part of (iii) we just notice that, when $\E$ is globally generated it defines a morphism $f : X \to \mathbb G$ to a Grassmannian and then any Schur class $s_{\lambda}(\E)$ is the pull-back $f^*s_{\lambda}(Q)$, where $Q$ is the tatutological quotient bundle. By \cite[Rmk.~8.3.6]{l}, there is a Schubert variety $\Omega_{\lambda}$ such that $[s_{\lambda}(Q)] = [\Omega_{\lambda}]$. Given any $x \in X$, since $\mathbb G$ is a homogeneous space, a general translate $g\Omega_{\lambda}$ of $\Omega_{\lambda}$ does not contain $f(x)$, is generically transverse to $f$ and $[g\Omega_{\lambda}]=[\Omega_{\lambda}]$ by Kleiman’s transversality theorem \cite[Thm.~1.7]{eh}. By \cite[Thm.~1.23]{eh} we have that $[f^{-1}(g\Omega_{\lambda})] = f^*[g\Omega_{\lambda}]=f^*[\Omega_{\lambda}]=f^*[s_{\lambda}(Q)] = [s_{\lambda}(\E)]$ and of course $x \not\in f^{-1}(g\Omega_{\lambda})$. Hence (i) follows for $s_{\lambda}(\E)$. When $\E(-A)$ is globally generated, the proof of (ii) for Schur classes is similar to the proof of (ii) for Chern classes. In fact by \cite[Cor.~7.2]{p} (or \cite[\S 2.5]{rt}) we have that
\[ [s_{\lambda}(\E)] = d^{\lambda}_{(1,\ldots,1)} [A^{n-k}] + \sum\limits_{\nu} d^{\lambda}_{\nu}[A^{n-k-|\nu|}s_{\nu}(\E(-A))] \]
and $d^{\lambda}_{(1,\ldots,1)} > 0$, $d^{\lambda}_{\nu} \ge 0$ for all $\nu$. Then the proof goes as above.
\end{proof}

\begin{lemma}
\label{base}
Assume that $X$ is smooth and that $k < \frac{n}{2}+1$. 
Then there are $V_1, \ldots, V_p$ smooth subvarieties of $X$ of dimension $k$ such that $\{[V_1], \ldots, [V_p]\}$ is a basis of $N_k(X)$.
\end{lemma}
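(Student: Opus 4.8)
The plan is to realize $N_k(X)$ as spanned by classes of smooth $k$-dimensional subvarieties, and then simply to extract a basis from such classes. First I would record that the classes $[W]$ of irreducible $k$-dimensional subvarieties $W\subseteq X$ span $N_k(X)$: indeed $\Eff_k(X)$ spans $N_k(X)$, and every effective cycle is a nonnegative combination of prime cycles. So it suffices to show that $[W]\in S$ for every irreducible $k$-dimensional $W\subseteq X$, where $S\subseteq N_k(X)$ denotes the subspace generated by the classes of smooth $k$-dimensional subvarieties; once this is done, any maximal linearly independent subfamily of $\{[V]:V\subseteq X\text{ smooth},\ \dim V=k\}$ is the desired basis $\{[V_1],\dots,[V_p]\}$.

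To prove $[W]\in S$ I would argue by a liaison (residuation) argument, using descending induction on $s:=\dim\Sing(W)$; the case $s=-1$, i.e. $W$ smooth, is trivial. Fix a projective embedding $X\subseteq\PP^N$ with hyperplane class $H$. By Bertini a general complete intersection of $n-k$ general members of $|\mathcal O_X(1)|$ is a smooth $k$-dimensional subvariety, so $[H^{n-k}]\in S$. For $d\gg 0$ choose general hypersurfaces $G_1,\dots,G_{n-k}$ of degree $d$ vanishing along $W$ and to order $\ge m$ along $\Sing(W)$, where $m$ is a fixed large integer (such $G_i$ exist because $\mathcal I_W\cdot\mathcal I_{\Sing(W)}^{\,m}(d)$ is globally generated for $d\gg 0$), and set $Y=X\cap G_1\cap\cdots\cap G_{n-k}$. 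Since $X$ is smooth and the base locus of this linear system lies in $W$, the scheme $Y$ is a pure $k$-dimensional complete intersection in $X$, smooth away from $W$, with class $[Y]=d^{\,n-k}[H^{n-k}]\in S$ in $N_k(X)$; moreover near a smooth point of $W$ one has $Y=W$ locally, so the residual $W':=\overline{Y\setminus W}$ meets $W$ only inside $\Sing(W)$ and is smooth away from $W'\cap\Sing(W)$. This is exactly where the hypothesis $k<\tfrac n2+1$ is used: it is equivalent to $\dim\Sing(W)\le k-1\le n-k=\codim_X W$, so $\Sing(W)$ has codimension at least the number $n-k$ of linking hypersurfaces, and this (together with the high vanishing order $m$ along $\Sing(W)$) forces $\dim\Sing(W')<s$. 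By the inductive hypothesis $[W']\in S$, hence $[W]=[Y]-[W']\in S$.

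Granting the above, one concludes $S=N_k(X)$, and a maximal linearly independent family of classes of smooth $k$-dimensional subvarieties is the required basis. The step I expect to be the genuine obstacle is the smoothness control of the residual $W'$ in the liaison step — equivalently, the statement that $\dim\Sing(W')<\dim\Sing(W)$, so that the induction terminates at a smooth representative. This is a Bertini-type assertion for residual (linked) intersections, and it is precisely there that the numerical bound $k<\tfrac n2+1$, i.e. $\dim\Sing(W)\le\codim_X W$, plays its role; everything else — the spanning by prime cycles, the identity $[Y]=d^{\,n-k}[H^{n-k}]$, and the extraction of a basis — is routine.
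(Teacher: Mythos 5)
The paper's own proof is a one-line citation of Kleiman's smoothing-of-cycles theorem \cite[Thm.~5.8]{k}: for a smooth quasi-projective $X$ of dimension $n$ and $2k<n+2$ (i.e.\ $k<\tfrac n2+1$), every $k$-cycle on $X$ has a positive multiple rationally equivalent to a difference of smooth $k$-dimensional subvarieties. Granting that, the classes of smooth $k$-dimensional subvarieties span $N_k(X)$, and extracting a maximal linearly independent subfamily finishes the argument. Your reduction steps — spanning by prime cycles, extracting a basis from a spanning set — are fine and match what one would do after invoking Kleiman.

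However, your attempt to reprove Kleiman's theorem by liaison has a genuine gap, and you correctly identify it yourself: the claim $\dim\Sing(W')<\dim\Sing(W)$. This is not a ``routine Bertini'' statement; it is precisely the hard technical core of the smoothing theorem. Two concrete problems with the naive residuation. First, once you impose that the $G_i$ vanish to order $\ge m$ along $\Sing(W)$ with $m$ large, the residual $W'$ tends to \emph{contain} $\Sing(W)$, so $\dim(W'\cap\Sing(W))=\dim\Sing(W)=s$, not $<s$; the residual inherits, rather than avoids, the bad locus. Second, even where $W'$ meets $W$ only inside $\Sing(W)$, Bertini only gives smoothness of $Y=W\cup W'$ away from the base locus $W$, so a priori $\Sing(W')\subseteq W'\cap W$, which has dimension up to $s$ — again no strict drop. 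The actual proofs (Hironaka in characteristic $0$ via resolution of singularities, Kleiman in general via a careful transversality analysis of incidence correspondences in Grassmannians, which is where the bound $2k<n+2$ enters) are substantially more delicate and do not follow the straight ``link and induct on $\dim\Sing$'' scheme. So your proposal correctly reduces the lemma to the smoothing theorem, but the proof of that theorem is not supplied; you should cite it (as the paper does) rather than re-derive it.
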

\begin{proof}
This follows by \cite[Thm.~5.8]{k}.
\end{proof}

\begin{remark}
As it will be clear from the sequel, it would actually be enough to have a basis of classes of lci subvarieties. It is an open problem that such a basis exists for every $k$. In \cite[Conj.~2.1]{mv} it is conjectured to hold for rational equivalence, whence also for numerical equivalence.
\end{remark}

\begin{lemma}
\label{basesing}
Assume that $X$ is smooth and let $V$ be a smooth subvariety of $X$ of dimension $k$ with $1 \le k \le n-2$. Let $A$ be an ample Cartier divisor on $X$ and let $m \gg 0$. For every $x \in X$ there exist divisors $D_i \in |\I_{V/X}(m A)|,  1 \leq i \leq n-k-1$, such that if $Y$ is the complete intersection of $D_1, \ldots,  D_{n-k-1}$, then $x \not\in \Sing(Y)$. 
\end{lemma}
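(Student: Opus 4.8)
The goal is to realize the given subvariety $V$ of dimension $k$ as a component of a complete intersection $Y = D_1 \cap \cdots \cap D_{n-k-1}$ of divisors $D_i \in |\I_{V/X}(mA)|$ so that a prescribed point $x$ avoids the singular locus of $Y$. The natural strategy is a Bertini-type argument applied to the linear system $|\I_{V/X}(mA)|$: for $m \gg 0$ this system is very ample away from $V$ and, more importantly, separates tangent directions, so a general member is smooth away from $V$ and, along $V$, is smooth wherever $V$ itself is smooth (which is everywhere, since $V$ is assumed smooth). The complete intersection $Y$ then has two pieces to worry about: points of $Y \setminus V$, where genericity of $D_1, \ldots, D_{n-k-1}$ forces $Y$ to be smooth of the expected dimension $k$ by the usual Bertini theorem over an algebraically closed field; and points of $V$, where one must check that the chosen $D_i$ meet transversally so that $V$ is (at least locally near $x$) a reduced component of $Y$ with $Y$ smooth there.

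First I would fix the ample Cartier divisor $A$ and choose $m$ large enough that $\I_{V/X}(mA)$ is globally generated (Serre vanishing / Castelnuovo--Mumford regularity) and that $mA$ separates points and tangent vectors on $X$; this makes the rational map defined by $|\I_{V/X}(mA)|$ an embedding on $X \setminus V$ and realizes the blow-up of $X$ along $V$ as the resolution of its base locus. Second, I would run Bertini on $X \setminus V$: a general $D_1$ is smooth on $X \setminus V$, a general $D_2$ restricted to $D_1 \setminus V$ is smooth, and so on; after $n-k-1$ steps $Y \setminus V$ is smooth of dimension $k$. Third — the part requiring care near $x$ — I would work in the blow-up $\pi : \wt{X} \to X$ of $X$ along $V$ with exceptional divisor $E$: the linear system $\pi^*(mA) - E$ is base-point free for $m \gg 0$, so its general members $\wt{D}_i$ are smooth and meet transversally, and their images $D_i = \pi(\wt{D}_i)$ contain $V$. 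Choosing the $\wt{D}_i$ general through the fixed point $\wt{x}$ lying over $x$ (here using that $V$, hence $E$ over $x$, is smooth so the fiber $\pi^{-1}(x)$ and the relevant tangent data behave well), one arranges that $D_1, \ldots, D_{n-k-1}$ are transversal at $x$; then near $x$ the intersection $D_1 \cap \cdots \cap D_{n-k-1}$ is smooth, so $x \notin \Sing(Y)$.

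**Main obstacle.** The crux is the local analysis at $x \in V$: away from $V$ this is a routine iterated Bertini argument, but along $V$ the divisors $D_i$ are all forced to contain $V$, so $Y \supseteq V$ automatically and one must ensure $Y$ is actually \emph{smooth} at $x$ — equivalently that the $D_i$ cut out $V$ transversally in a neighborhood of $x$, i.e. that $dD_1|_x, \ldots, dD_{n-k-1}|_x$ are linearly independent in $T_x X / T_x V$. This is exactly what the base-point freeness of $\pi^*(mA) - E$ on $\wt X$ delivers, since the conormal sheaf $\I_{V/X}/\I_{V/X}^2$ (twisted by $mA$) is then globally generated and its sections through a general point of the projectivized normal bundle over $x$ span, allowing one to pick $n - k - 1$ sections whose normal derivatives at $x$ are independent. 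The bound $k \le n-2$ guarantees $n-k-1 \ge 1$, so there is at least one divisor to cut with, and the smoothness of $V$ is what makes the conormal sheaf locally free near $x$, which is essential for this spanning argument. I would therefore expect the proof to hinge on choosing $m$ large enough for the precise regularity statement ``$\I_{V/X}(mA) \otimes (\I_{V/X}/\I_{V/X}^2)$ globally generated'' and then invoking Bertini on the blow-up, with the transversality at $x$ following from generality of the members chosen to pass through $x$.
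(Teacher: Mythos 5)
Your proposal is correct, and its core idea coincides with the paper's: one must use global generation of the twisted conormal sheaf $(\I_{V/X}/\I_{V/X}^2)(mA)$ to produce $n-k-1$ sections of $\I_{V/X}(mA)$ whose differentials at $x$ are linearly independent in $N_{V/X,x}^\vee$, forcing $Y$ to be smooth at $x$. Where you differ from the paper is only in packaging: the paper stays on $X$, chooses sections $\sigma_i$ of $(\I_{V/X}/\I_{V/X}^2)(mA)$ independent at $x$, and lifts them to $f_i \in H^0(\I_{V/X}(mA))$ via the exact sequence $0 \to \I_{V/X}^2(mA) \to \I_{V/X}(mA) \to (\I_{V/X}/\I_{V/X}^2)(mA) \to 0$ together with the vanishing $H^1(\I_{V/X}^2(mA)) = 0$; you instead pass to the blow-up $\pi:\wt X \to X$ and invoke base-point-freeness of $\pi^*(mA) - E$, using $H^0(\wt X, \pi^*(mA)-E) = H^0(X, \I_{V/X}(mA))$ to avoid an explicit lifting step. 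Both routes are fine, and in fact for $x \in V$ the fiber $\I_{V/X}(mA)\otimes k(x)$ coincides with the conormal fiber, so plain global generation of $\I_{V/X}(mA)$ already suffices; the paper's $H^1$ vanishing and your blow-up are two ways to make that surjectivity visible. Two small inaccuracies in your writeup that should be fixed but do not damage the argument: $V$ is not a component of $Y$ (one has $\dim Y = k+1 > k = \dim V$, so $V$ is a divisor inside $Y$, not a component), and the constraint that the $\wt D_i$ pass through a common $\wt x$ over $x$ is unnecessary --- what is needed is only that the conormal derivatives $df_1|_x, \ldots, df_{n-k-1}|_x$ be independent, and $n-k-1 < n-k = \operatorname{rk} N_{V/X}^\vee$ leaves room for that without fixing a common point in $\pi^{-1}(x)$.
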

\begin{proof}
Let $m \gg 0$ be such that $\I_{V/X}(mA)$ is globally generated and $H^1(\I_{V/X}^2 (mA))=0$. If $x \not\in V$, then for a general choice of $D_1, \ldots,  D_{n-k-1}$, we actually have that $x \not\in Y$. Now suppose that $x \in V$. Since $(\I_{V/X} / \I_{V/X}^2) (mA)$ is also globally generated and has rank $n-k$, there are $\sigma_1, \ldots, \sigma_{n-k-1} \in H^0((\I_{V/X} / \I_{V/X}^2) (mA))$ such that they are linearly independent in $x$. From the exact sequence
$$0 \to \I_{V/X}^2 (mA) \to \I_{V/X} (mA) \to (\I_{V/X} / \I_{V/X}^2) (mA)$$
we get $f_1, \ldots, f_{n-k-1} \in H^0(\I_{V/X}(m A))$ such that $\sigma_i = df_i, 1 \le i \le n-k-1$. Hence $df_1, \ldots, df_{n-k-1}$ are linearly independent in $x$, that is $x \not\in \Sing(Y)$, where $Y$ is the complete intersection of the divisors $D_1, \ldots,  D_{n-k-1}$ associated to $f_1, \ldots, f_{n-k-1}$. 
\end{proof}

We now proceed to give the second proof, holding only when $k < \frac{n}{2}+1$  or $k=n-1$, of Theorem \ref{aperto}.

\begin{proof}
By Proposition \ref{tutte}\eqref{unione-i}-\eqref{unione-ii} we see that $P_k(X)$ is a convex cone. Now assume that $X$ is smooth. Let $p = \dim N_k(X)$. It follows by \cite[Cor.~2.5.2]{j} (see also \cite[Rmk.~2.2]{fl2}) that there are vector bundles $\E_1, \ldots, \E_p$ on $X$ such that $\{[c_{n-k}(\E_1)], \ldots, [c_{n-k}(\E_p)]\}$ is a basis of $N_k(X)$. In particular $\rk \E_j \ge n-k$ for all $1 \le j \le p$. Let $A$ be an ample Cartier divisor and let $m_0$ be such that $\E_j(mA)$ is globally generated for all $1 \le j \le p$ and for all $m \ge m_0$. Now let $\{\phi_1, \ldots, \phi_p\}$ be a basis of $N_k(X)^{\vee}$. Then the matrix $(\phi_i([c_{n-k}(\E_j)]))$ is nondegenerate, whence so is the matrix $(\phi_i([c_{n-k}(\E_j(mA))]))$ for $m \gg 0$, because its determinant is a non-zero polynomial in $m$. Therefore $\{[c_{n-k}(\E_1(mA))], \ldots, [c_{n-k}(\E_p(mA))]\}$ is a basis of $N_k(X)$.  On the other hand, $[c_{n-k}(\E_j(mA))] \in P_k(X)$ for all $1 \le j \le p$ by Lemma \ref{vb}(ii) and therefore $P_k(X)$ is full-dimensional in $N_k(X)$.

If $k=n-1$ it folllows by Lemma \ref{divis}(i) that $P_{n-1}(X) = \Amp(X)$, whence it is open. Suppose next that $k < \frac{n}{2}+1$ and $k \le n-2$. 

By Lemma \ref{base} there are $V_1, \ldots, V_p$ smooth subvarieties of $X$ of dimension $k$ such that $\{[V_1], \ldots, [V_p]\}$ is a basis of $N_k(X)$. To see that $P_k(X)$  is open it is enough to prove that, if $\alpha \in P_k(X)$, then there is a $\delta > 0$ such that 
\begin{equation}
\label{open}
\alpha + \varepsilon_1 [V_1] + \ldots + \varepsilon_p [V_p] \in P_k(X) \ \mbox{for all} \ \varepsilon_i \in \R \ \mbox{such that} \ |\varepsilon_i| < \delta, 1 \leq i \leq p.
\end{equation}
Let $A_1, \ldots, A_{n-k}$ be ample $\R$-Cartier $\R$-divisors on $X$ and let $\beta \in N_k(X)$ such that $\B_{\rm num} (\beta) = \emptyset$ and
\begin{equation}
\label{uno}
\alpha = [A_1 \cdots A_{n-k}] + \beta.
\end{equation}
For $j = 1, \ldots, n-k$ we can write $A_j = c_j A'_j + A''_j$ with $c_j \in \R^+, A'_j$ ample Cartier divisor and $A''_j$ zero or ample $\R$-Cartier $\R$-divisor. Let $A$ be a very ample Cartier divisor and let $s_j$ be such that $s_jA'_j-A$ is ample. Then we can write $A_j = c'_j A + A'''_j$ with $c'_j \in \R^+$ and $A'''_j$ ample $\R$-Cartier $\R$-divisor. Setting $c = c'_1 \cdots c'_{n-k}$, we have
\begin{equation}
\label{due}
[A_1 \cdots A_{n-k}] = c [A^{n-k}] + \gamma
\end{equation}
where $\gamma \in N_k(X)$ is a class that is either zero or sum of intersections of $n-k$ ample $\R$-Cartier $\R$-divisors. In particular $\B_{\rm num} (\gamma) = \emptyset$. Let $m_0 \gg 0$ be such that Lemma \ref{basesing} holds for all $V_i, 1 \leq i \leq p$. Let $D_{ij} \in |\I_{V_i/X}(m_0 A)|$ be general divisors and let $Y_i$ be the complete intersection of $D_{i,1}, \ldots,  D_{i,n-k-1}$. Let $\O_{Y_i}(\pm V_i)$ be the sheaf associated to the Weil divisor $\pm V_i$. Now let $m_1 \gg 0$ be such that $H^q(Y_i, \O_{Y_i}(\pm V_i)((m_1 - q)A)) = 0$ for every $q > 0$ and for all $1 \leq i \leq p$. Note that then $\O_{Y_i}(\pm V_i)(m_1 A)$ is $0$-regular, whence globally generated for all $1 \leq i \leq p$. Set $m = m_0^{n-k-1} m_1$. Then there are effective $k$-cycles $e_i, f_i$ on $X$ such that, for all $1 \leq i \leq p$, we have
\begin{equation}
\label{bp}
m [A^{n-k}] = [V_i] + [e_i] \ \mbox{and} \ m [A^{n-k}] = - [V_i] + [f_i].
\end{equation}
We claim that, for all $1 \leq i \leq p$, we have $\B_{\rm num} ([e_i]) = \B_{\rm num} ([f_i]) = \emptyset$. 

In fact let $x \in X$. By Lemma \ref{basesing} we have that for a general choice of divisors $D'_{ij} \in |\I_{V_i/X}(m_0 A)|,  1 \leq i \leq n-k-1$, we have that if $Y'_i$ is their complete intersection, then $x \not\in \Sing(Y'_i)$. By semicontinuity we have that $H^q(Y'_i, \O_{Y'_i}(\pm V_i)((m_1 - q)A)) = 0$ for every $q > 0$ and for all $1 \leq i \leq p$ and therefore $\O_{Y'_i}(\pm V_i)(m_1 A)$ is $0$-regular, whence globally generated for all $1 \leq i \leq p$. Then both $\O_{Y'_i}(-V_i)(m_1 A)$ and $\O_{Y'_i}(V_i)(m_1 A)$ are globally generated line bundles in a neighborhood of $x$ and therefore we can find $e'_i, f'_i$, of the same class  $m [A^{n-k}] \pm [V_i]$ on $X$ of $e_i, f_i$, and such that $x \not\in \Supp(e'_i) \cup \Supp(f'_i)$.

Now let 
\[ \delta = \frac{c}{4pm} \]
and assume that $|\varepsilon_i| < \delta$ for all $1 \leq i \leq p$. Let $s := \# \{i \in \{1, \dots, p \} : \varepsilon_i < 0 \}$ and set $A' = (\frac{c}{2})^{\frac{1}{n-k}}A$. Then, using \eqref{uno} and \eqref{due}, we can write
\[ \alpha + \sum\limits_{i=1}^p \varepsilon_i [V_i] = c [A^{n-k}] +  \beta + \gamma + \sum\limits_{i=1}^p \varepsilon_i [V_i] = \]
\[ = [(A')^{n-k}] + \beta + \gamma + \frac{c}{2} [A^{n-k}] + \sum\limits_{i=1}^p \varepsilon_i [V_i]  \]
and now
\[ \frac{c}{2} [A^{n-k}] + \sum\limits_{i=1}^p \varepsilon_i [V_i] = \] 
\[ \sum\limits_{1 \leq i \leq p: \varepsilon_i < 0} (\frac{c}{4s}  [A^{n-k}] + \varepsilon_i [V_i]) + \sum\limits_{1 \leq i \leq p: \varepsilon_i \geq 0} (\frac{c}{4(p-s)}  [A^{n-k}] + \varepsilon_i [V_i]) \]
where the first sum is empty if $s = 0$ and the second sum is empty if $s = p$. Since $[(A')^{n-k}]$ is intersection of $n-k$ ample $\R$-Cartier $\R$-divisors, and $\B_{\rm num} (\beta) = \B_{\rm num} (\gamma) = \emptyset$, by Proposition \ref{tutte}\eqref{unione-i}-\eqref{unione-ii} we see that \eqref{open} will be proved as soon as we show that
\[ \B_{\rm num} (\frac{c}{4s}  [A^{n-k}] + \varepsilon_i [V_i]) = \emptyset \ \mbox{for all} \ i \ \mbox{such that} \ \varepsilon_i  < 0 \]
and
\[ \B_{\rm num} (\frac{c}{4(p-s)}  [A^{n-k}] + \varepsilon_i [V_i]) = \emptyset \ \mbox{for all} \ i \ \mbox{such that} \ \varepsilon_i  \geq 0. \]
On the other hand, the latter clearly holds, again by Proposition \ref{tutte}\eqref{unione-i}-\eqref{unione-ii}, since, by \eqref{bp}, we can write
\[ \frac{c}{4s}  [A^{n-k}] + \varepsilon_i [V_i] = (\frac{c}{4s} + \varepsilon_i m) [A^{n-k}] - \varepsilon_i [e_i] \]
for all $i$ such that $\varepsilon_i  < 0$ and 
\[ \frac{c}{4(p-s)}  [A^{n-k}] + \varepsilon_i [V_i] = (\frac{c}{4(p-s)} - \varepsilon_i m) [A^{n-k}] + \varepsilon_i [f_i] \]
for all $i$ such that $\varepsilon_i  \geq 0$, observing that
\[ \frac{c}{4s} + \varepsilon_i m > 0 \ \mbox{and} \ \frac{c}{4(p-s)} - \varepsilon_i m > 0 \]
by the choice of $\delta$.
\end{proof}

\section{The cone of numerically semiample cycles} 
\label{nsam}

It is clear that if $\alpha \in P_k(X)$, then $\B_{\rm num} (\alpha) = \emptyset$. This allows to introduce a larger cone.

\begin{defi}
\label{nsa}
The {\it cone of numerically semiample cycles} is
\[ NSA_k(X) = \{\alpha \in N_k(X):  \B_{\rm num} (\alpha) = \emptyset \}. \]
\end{defi}

It follows by Proposition \ref{tutte}\eqref{unione-i}-\eqref{unione-ii} that $NSA_k(X)$ is a convex cone. 

The first consequence of Theorem \ref{aperto} is the following.

\begin{lemma}
\label{int}
We have $\Int(NSA_k(X)) \subseteq P_k(X) \subseteq NSA_k(X)$. Moreover, if $X$ is smooth, then $P_k(X)$ is the interior of $NSA_k(X)$ and $\overline{P_k(X)} = \overline{NSA_k(X)}$.
\end{lemma}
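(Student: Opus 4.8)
\textbf{Proof plan for Lemma \ref{int}.}
The plan is to establish the chain of inclusions first and then bootstrap it to the refined statement in the smooth case using Theorem \ref{aperto}. For the inclusion $P_k(X) \subseteq NSA_k(X)$: by definition any $\alpha \in P_k(X)$ can be written as $\alpha = [A_1 \cdots A_{n-k}] + \beta$ with the $A_i$ ample $\R$-Cartier $\R$-divisors and $\B_{\rm num}(\beta) = \emptyset$; since $\B_{\rm num}([A_1 \cdots A_{n-k}]) = \emptyset$ by Proposition \ref{tutte}\eqref{unione-ii} (as in the proof of \eqref{nest}), Proposition \ref{tutte}\eqref{unione-ii} again gives $\B_{\rm num}(\alpha) \subseteq \B_{\rm num}([A_1 \cdots A_{n-k}]) \cup \B_{\rm num}(\beta) = \emptyset$, so $\alpha \in NSA_k(X)$.

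For the inclusion $\Int(NSA_k(X)) \subseteq P_k(X)$: let $\alpha \in \Int(NSA_k(X))$, fix an ample Cartier divisor $A$ on $X$, and note that for $\varepsilon > 0$ small enough $\alpha - \varepsilon [A^{n-k}]$ still lies in $NSA_k(X)$, i.e. $\B_{\rm num}(\alpha - \varepsilon [A^{n-k}]) = \emptyset$. Setting $\beta := \alpha - \varepsilon [A^{n-k}]$, we then have $\alpha = [(\varepsilon^{1/(n-k)} A)^{n-k}] + \beta$ with $\B_{\rm num}(\beta) = \emptyset$, which exhibits $\alpha \in P_k(X)$. This already gives the displayed chain $\Int(NSA_k(X)) \subseteq P_k(X) \subseteq NSA_k(X)$ with no smoothness hypothesis.

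Now assume $X$ is smooth. By Theorem \ref{aperto}, $P_k(X)$ is open, so $P_k(X) = \Int(P_k(X)) \subseteq \Int(NSA_k(X))$; combined with the inclusion just proved in the other direction, $P_k(X) = \Int(NSA_k(X))$. For the closures, from $\Int(NSA_k(X)) = P_k(X) \subseteq NSA_k(X)$ we get $\overline{P_k(X)} \subseteq \overline{NSA_k(X)}$ immediately. For the reverse inclusion, since $NSA_k(X)$ is a convex cone (noted after Definition \ref{nsa}) and, again by Theorem \ref{aperto}, $P_k(X) = \Int(NSA_k(X))$ is nonempty and full-dimensional, a standard fact on convex sets with nonempty interior gives $\overline{NSA_k(X)} = \overline{\Int(NSA_k(X))} = \overline{P_k(X)}$. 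The only mild subtlety — and the one point I would be careful about — is invoking that convex-geometry fact ($\overline{C} = \overline{\operatorname{Int} C}$ for a convex set $C$ with $\operatorname{Int} C \neq \emptyset$), which needs the full-dimensionality supplied by Theorem \ref{aperto}; everything else is a formal manipulation of the monotonicity and union properties of $\B_{\rm num}$ from Proposition \ref{tutte}.
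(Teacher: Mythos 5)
Your proof is correct and follows essentially the same route as the paper: the two inclusions are established exactly as in the paper, and the smooth case is then reduced to Theorem \ref{aperto} together with the standard fact $\overline{C} = \overline{\Int C}$ for a convex set $C$ with nonempty interior. You spell out the invocation of full-dimensionality a bit more explicitly than the paper does, which is a harmless clarification, not a different argument.
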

\begin{proof}
By Proposition \ref{tutte}\eqref{unione-ii} we have that $P_k(X) \subseteq NSA_k(X)$. Let $\alpha \in \Int(NSA_k(X))$ and let $A$ be an ample Cartier divisor. Then there exists an $\varepsilon > 0$ such that 
\[ \alpha - \varepsilon [A^{n-k}] \in NSA_k(X) \]
and setting $\beta = \alpha - \varepsilon [A^{n-k}]$ we get that $\B_{\rm num} (\beta) = \emptyset$ and
\[ \alpha = [(\varepsilon^{\frac{1}{n-k}} A)^{n-k}] + \beta \in P_k(X). \]
Now suppose that $X$ is smooth. It follows by Theorem \ref{aperto} that $P_k(X) = \Int(NSA_k(X))$. Finally $\overline{P_k(X)} = \overline{NSA_k(X)}$ holds because $NSA_k(X)$ is a convex cone.
\end{proof}

We record some general properties of these cones.

\begin{remark} 
\label{coni}
\hskip 3cm
\begin{itemize}
\item[(i)] $P_k(X)$ and $NSA_k(X)$ are convex salient cones; 
\item[(ii)] $NSA_k(X)$ is not, in general, neither open nor closed. 
\end{itemize}
\end{remark}
\begin{proof}
We have already seen, in Theorem \ref{aperto} and after definition \ref{nsa}, that $P_k(X)$ and $NSA_k(X)$ are convex cones. Now let $\alpha \in NSA_k(X)$ and assume that also $-\alpha \in NSA_k(X)$. Pick $e, f \in Z_k(X)_{\R}$ such that $e$ and $f$ are effective and $[e] = \alpha, [f] = -\alpha$ and let $A$ be an ample Cartier divisor on $X$. Then $\alpha \cdot [A^{n-k}] = [e] \cdot [A^{n-k}] \geq 0$ and $- \alpha \cdot [A^{n-k}] = [f] \cdot [A^{n-k}] \geq 0$, so that $\alpha \cdot [A^{n-k}] = 0$ and therefore $e = 0$. This gives (i).

To see (ii) take a nef non semiample divisor $D$ on a smooth surface $X$ with $q(X) = 0$, such as in Zariski's example \cite{z}, \cite[Example 2.3A]{l}. Then $[D] \in \overline{NSA_1(X)}$ but $\B_{\rm num} ([D]) = \B(D) \neq \emptyset$, so that $[D] \not\in NSA_1(X)$. Hence $NSA_1(X)$ is not closed. Now take a semiample non big divisor $D$ on some smooth surface $X$. Then $\B_{\rm num} ([D]) = \emptyset$ so that $[D] \in NSA_1(X)$, but $[D]$ is not in the interior of $NSA_1(X)$, for otherwise, by Lemma \ref{int}, $[D] \in P_1(X)$, so that $D$ is big. Therefore $NSA_1(X)$ is not open.
\end{proof}

\begin{lemma}
\label{coni2}
We have
\begin{itemize}
\item[(i)] $P_k(X) \subseteq {\rm Big}_k(X)$ and $\overline{P_k(X)} \subseteq \overline{\Eff_k(X)}$; 
\item[(ii)] Assume that $\Eff_k(X) \subseteq NSA_k(X)$ (for example if $X$ is an abelian or homogeneous variety). Then
$P_k(X) = {\rm Big}_k(X)$ and $\overline{P_k(X)} = \overline{\Eff_k(X)}$;
\item[(iii)] In general, if $X$ is smooth, $P_k(X) \not\subseteq \Nef_k(X)$.
\end{itemize}
\end{lemma}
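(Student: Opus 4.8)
The plan is to deduce (i) formally from Theorem~\ref{b+}, to upgrade it to the two equalities of (ii) using the extra hypothesis, and to settle (iii) by one explicit smooth example. For (i): if $\alpha\in P_k(X)$ then $\B_+(\alpha)=\emptyset$ by Theorem~\ref{b+}(ii), so in particular $\B_+(\alpha)\subsetneq X$, and hence $\alpha$ is big by Theorem~\ref{b+}(i); thus $P_k(X)\subseteq{\rm Big}_k(X)$. Taking closures and using ${\rm Big}_k(X)=\Int(\overline{\Eff_k(X)})\subseteq\overline{\Eff_k(X)}$, a closed set, gives $\overline{P_k(X)}\subseteq\overline{\Eff_k(X)}$.

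For (ii), by (i) it is enough to show ${\rm Big}_k(X)\subseteq P_k(X)$. Given a big $\alpha$ and an ample Cartier divisor $A$ on $X$, arguing exactly as in the proof of Theorem~\ref{b+}(i) there is an $\varepsilon>0$ with $\beta:=\alpha-\varepsilon[A^{n-k}]\in\Eff_k(X)$; the hypothesis $\Eff_k(X)\subseteq NSA_k(X)$ then forces $\B_{\rm num}(\beta)=\emptyset$, so $\alpha=[(\varepsilon^{1/(n-k)}A)^{n-k}]+\beta\in P_k(X)$, whence $P_k(X)={\rm Big}_k(X)$. For the statement on closures I would note that the effective classes span $N_k(X)$ (every cycle is a difference of two effective cycles), so $\overline{\Eff_k(X)}$ is a full-dimensional convex cone and therefore equals the closure of its interior ${\rm Big}_k(X)$; combined with $P_k(X)={\rm Big}_k(X)$ this gives $\overline{P_k(X)}=\overline{\Eff_k(X)}$. (One may equivalently invoke Lemma~\ref{int}.)

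For (iii) the plan is as follows. Let $S$ be the blow-up of $\PP^2$ at a point, $E\subset S$ the exceptional $(-1)$-curve, $X=S\times\PP^1$ (so $n=\dim X=3$ and we take $k=1$), and let $p\colon X\to S$, $q\colon X\to\PP^1$ be the two projections; set $\beta=[E\times\{\mathrm{pt}\}]\in N_1(X)$. First I would show $\beta\in NSA_1(X)$: if $e$ is an effective $1$-cycle with $[e]=\beta$, then $p_*e$ is an effective $1$-cycle on $S$ numerically equivalent to $E$, which forces $p_*e=E$ as a cycle (since $E^2<0$, any effective class numerically equivalent to $E$ contains $E$ in its support, and a short computation with an ample divisor gives equality), and then pairing with $q^*[\mathrm{pt}]$ excludes both vertical components and multisections over $E$, so $e=E\times\{\mathrm{pt}'\}$ for some $\mathrm{pt}'\in\PP^1$; since $\bigcap_{\mathrm{pt}'}\bigl(E\times\{\mathrm{pt}'\}\bigr)=\emptyset$ we conclude $\B_{\rm num}(\beta)=\emptyset$. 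Then, for an ample Cartier divisor $A$ on $X$ and any $\varepsilon>0$, the class $\alpha=[(\sqrt{\varepsilon}A)^2]+\beta$ lies in $P_1(X)$, whereas the effective (hence pseudoeffective) divisor $D=E\times\PP^1=p^*[E]$ satisfies $\beta\cdot D=E^2=-1$ by the projection formula, so $\alpha\cdot D=\varepsilon\,([A^2]\cdot D)-1<0$ for $\varepsilon$ small and therefore $\alpha\notin\Nef_1(X)$; hence $P_1(X)\not\subseteq\Nef_1(X)$. I expect parts (i) and (ii) to be essentially formal; the real work is in (iii), where the delicate step is the complete description of the effective cycles representing $[E\times\{\mathrm{pt}\}]$ needed to check $\B_{\rm num}(\beta)=\emptyset$, after which the failure of nefness follows at once from a single negative intersection number.
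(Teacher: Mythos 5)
Your handling of parts (i) and (ii) essentially follows the paper's proof. The one notable difference in (i) is the route to bigness: you deduce it from Theorem~\ref{b+} (namely, $\alpha\in P_k(X)\Rightarrow\B_+(\alpha)=\emptyset\subsetneq X\Rightarrow\alpha$ big), whereas the paper directly invokes \cite[Lemma~2.12]{fl2} to see that $[A_1\cdots A_{n-k}]$ is big and then writes $\alpha$ as big plus pseudoeffective. Both are correct; yours leans more on machinery already developed in the paper, while the paper's is one step more direct. In (ii) you also spell out the closure statement, which the paper leaves implicit — a worthwhile addition. (When asserting $\beta:=\alpha-\varepsilon[A^{n-k}]\in\Eff_k(X)$, you are tacitly using that the interior of $\overline{\Eff_k(X)}$ is contained in $\Eff_k(X)$; this holds because $\Eff_k(X)$ is a full-dimensional convex cone, but it is worth noting.)

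For (iii) you take a genuinely different example, and here a few comments are in order. The paper's example is the blow-up $X$ of a smooth $n$-fold ($n\ge 3$) at a point, with $E\cong\PP^{n-1}$ the exceptional divisor and $\beta$ the class of a line in $E$: since lines in $\PP^{n-1}$ (with $n-1\ge 2$) have empty common intersection and all represent $\beta$, $\B_{\rm num}(\beta)=\emptyset$ is immediate, with no need to describe all representatives. In your example ($X=S\times\PP^1$, $\beta=[E\times\{\mathrm{pt}\}]$), the same shortcut is available and you should use it: the family $\{E\times\{\mathrm{pt}'\}\}_{\mathrm{pt}'\in\PP^1}$ already lies in $|\beta|_{\rm num}$ and has empty common support, so $\B_{\rm num}(\beta)\subseteq\bigcap_{\mathrm{pt}'}(E\times\{\mathrm{pt}'\})=\emptyset$. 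The complete classification of effective representatives you propose is both unnecessary and, as stated, incorrect: since we work with real coefficients, convex combinations such as $\tfrac12\,E\times\{\mathrm{pt}'_1\}+\tfrac12\,E\times\{\mathrm{pt}'_2\}$ are also effective cycles with class $\beta$ and are not of the form $E\times\{\mathrm{pt}'\}$. This does not break your conclusion (the intersection of supports is still empty), but it means the ``delicate step'' you flag is not only superfluous but would have to be reformulated if you insisted on carrying it out. The rest of your computation — $\beta\cdot p^*E=E^2=-1$ by the projection formula, hence $\alpha=[(\sqrt\varepsilon A)^2]+\beta\notin\Nef_1(X)$ for small $\varepsilon$ while $\alpha\in P_1(X)$ — is correct.
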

\begin{proof}
To see (i) let $\alpha \in P_k(X)$. Then there exist ample $\R$-Cartier $\R$-divisors $A_1, \ldots, A_{n-k}$ and $\beta \in N_k(X)$ such that $\B_{\rm num} (\beta) = \emptyset$ and $\alpha = [A_1 \cdots A_{n-k}] + \beta$. Pick $e \in Z_k(X)_{\R}$ such that $e$ is effective and $[e] = \beta$. As in \cite[Lemma 2.12]{fl2} we have that $[A_1 \cdots A_{n-k}]$ is big, whence $\alpha \in \Int(\overline{\Eff_k(X)}) + \overline{\Eff_k(X)} \subseteq  \Int(\overline{\Eff_k(X)}) = {\rm Big}_k(X)$ and (i) follows. 

To see (ii) let $\alpha \in {\rm Big}_k(X)$. Then, given an ample Cartier divisor $A$ on $X$, there is an $\varepsilon > 0$ such that $\alpha - \varepsilon [A^{n-k}] \in \Eff_k(X)$, whence $\alpha \in P_k(X)$. This gives (ii).

Now let $X$ be the blow-up of a smooth variety of dimension $n \ge 3$ at a point and let $E \subset X$ be the exceptional divisor, so that $E \cong \PP^{n-1}$. Let $\beta \in N_1(X)$ be the class of a line in $E$ so that $\beta.E = -1$ and $\B_{\rm num} (\beta) = \emptyset$. Let $A$ be an ample Cartier divisor on $X$ and let $m \gg 0$ be such that $(\frac{1}{m} [A^{n-1}] + \beta).E < 0$. Then $\alpha = \frac{1}{m} [A^{n-1}] + \beta \in P_1(X) \setminus \Nef_1(X)$.
\end{proof} 
It is clear that, in general, the inclusions in Lemma \ref{coni2}(i) can be strict. For example if $X$ is smooth we have by Lemma \ref{divis}(i) that $P_{n-1}(X) = \Amp(X)$ whence we can have strict inclusions.

\begin{remark} 
\label{comp}
Using Grassmannians and the example in the proof of Lemma \ref{coni2}(iii) it is easy to see that, in general, $P_k(X)$ is not contained, neither contains the positive cones defined in \cite{fl2}. It would be nice to understand the relation between $P_k(X)$ and the cone generated by ample subschemes. Since ample lci subschemes are nef \cite[\S 4]{o1}, it follows that in general $P_k(X)$ is not contained in the cone generated by ample lci subschemes.
\end{remark}

\begin{remark} 
\label{zd}
For a cycle $\alpha \in \overline{\Eff_k(X)}$ there are two notions of decomposition, the $\sigma$-decomposition in \cite[III.2]{na} and the Zariski decomposition in \cite{fl3}. It would be nice to understand the relation between $\B_-(\alpha)$ and the negative part of the decomposition, possibly resembling the case of divisors.
\end{remark}

\section{Proof of Theorem \ref{b-}} 
\label{pfthm3}

\begin{proof}
Let $A$ be an ample Cartier divisor on $X$. If $\B_-(\alpha) \subsetneq X$, then, for every $m \in \NN^+$, we have that $\B_{\rm num}(\alpha + \frac{1}{m} [A^{n-k}]) \subsetneq X$, whence $|\alpha +  \frac{1}{m} [A^{n-k}]|_{\rm num} \neq \emptyset$. Pick $e_m \in Z_k(X)_{\R}$ such that $e_m$ is effective and $[e_m] = \alpha +  \frac{1}{m} [A^{n-k}]$. Then $\alpha = \lim\limits_{m \to \infty} e_m$ is pseudoeffective. This proves (i). 

To see (ii) assume that the base field is uncountable and that $\alpha$ is pseudoeffective. Then $\alpha + \frac{1}{m} [A^{n-k}]$ is big for every $m \in \NN^+$ and therefore $\B_{\rm num}(\alpha + \frac{1}{m} [A^{n-k}]) \subsetneq X$ by Theorem \ref{b+}(i). Then also
$\B_-(\alpha) \subsetneq X$ by Proposition \ref{tutte}\eqref{numerabile} and this gives (ii).

If $\B_-(\alpha) = \emptyset$, then, for every $m \in \NN^+$, we have $\B_{\rm num} (\alpha + \frac{1}{m} [A^{n-k}]) = \emptyset$. Set $\beta_m = \alpha + \frac{1}{m} [A^{n-k}]$. Then $\alpha = \lim\limits_{m \to \infty} ( \frac{1}{m} [A^{n-k}] + \beta_m) \in \overline{P_k(X)}$. This proves (iii).

Finally assume that $X$ is smooth and let $\alpha \in \overline{P_k(X)}$. Let $\beta_m \in P_k(X)$ be such that 
\[ \alpha = \lim\limits_{m \to \infty} \beta_m. \]
Let $A_1,$ $\ldots,$ $A_{n-k}$ be any ample $\R$-Cartier $\R$-divisors on $X$. Then $[A_1 \cdots A_{n-k}] \in P_k(X)$, whence, by Theorem \ref{aperto}, for $m \gg 0$ we have that
\[ [A_1 \cdots A_{n-k}] + \alpha - \beta_m \in P_k(X). \]
But then 
\[ \alpha + [A_1 \cdots A_{n-k}] = ([A_1 \cdots A_{n-k}] + \alpha - \beta_m) + \beta_m \in P_k(X) \subseteq NSA_k(X)\]
by Lemma \ref{int} and therefore $\B_{\rm num} (\alpha + [A_1 \cdots A_{n-k}]) = \emptyset$. Hence $\B_-(\alpha) = \emptyset$ and (iv) is proved.
\end{proof}

\section{More properties of base loci of cycles} 
\label{cic}

When $X$ is smooth we can give more results.

First, Proposition \ref{tutte}\eqref{riduzgen} can be improved as follows.

\begin{prop}
\label{riduz}
Assume that $X$ is smooth and let $\alpha \in N_k(X)$. Then there is an $\varepsilon_{\alpha} > 0$ such that
\[ \B_{\rm num}(\alpha - \beta) \subseteq \B_+(\alpha) \]
for every $\beta \in N_k(X)$ such that $||\beta|| < \varepsilon_{\alpha}$ and
\[ \B_+(\alpha) = \B_{\rm num}(\alpha - [A_1 \cdots A_{n-k}]) \]
for every $A_1, \ldots, A_{n-k}$ ample $\R$-Cartier $\R$-divisors on $X$ such that $||[A_1 \cdots A_{n-k}]|| < \varepsilon_{\alpha}$.
\end{prop}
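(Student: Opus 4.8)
The statement is a quantitative strengthening of Proposition \ref{tutte}\eqref{riduzgen}(b): instead of perturbing by a \emph{fixed} complete intersection $[A_1'\cdots A_{n-k}']$ with small coefficient $\varepsilon$, we want a single radius $\varepsilon_\alpha$ such that \emph{every} perturbation $\beta$ of norm $<\varepsilon_\alpha$ computes $\B_+(\alpha)$ (in the appropriate one-sided sense), and in particular every complete intersection class of small norm does. The plan is to first produce $\varepsilon_\alpha$ from the defining data of $\B_+(\alpha)$, then use Proposition \ref{tutte}\eqref{unione-ii} to absorb an arbitrary small perturbation into that data.

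\textbf{Step 1: extract a finite computing family.} By Definition \ref{luoghibase} (together with Proposition \ref{tutte}\eqref{riduzgen}(b), which shows each $\B_{\rm num}(\alpha-\varepsilon[A_{i1}\cdots A_{i,n-k}])$ stabilizes) and Noetherianity of $X$, choose finitely many tuples of ample $\R$-Cartier $\R$-divisors $A_{i1},\ldots,A_{i,n-k}$, $1\le i\le s$, with
\[
\B_+(\alpha)=\bigcap_{i=1}^s \B_{\rm num}\bigl(\alpha-[A_{i1}\cdots A_{i,n-k}]\bigr).
\]
This is exactly the starting point used in the proof of Proposition \ref{tutte}\eqref{riduzgen}(b).

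\textbf{Step 2: choose $\varepsilon_\alpha$ so that the perturbation is dominated by each CI class.} Since $X$ is smooth, $CI_k(X)$ is full-dimensional and open in $N_k(X)$ (or one argues directly using the proof of Proposition \ref{tutte}\eqref{riduzgen}(a), which shows that $[A_{i1}\cdots A_{i,n-k}]-\varepsilon[A^{n-k}]$ lies in $CI_k(X)$ for an ample $A$ and all small $\varepsilon$). For each $i$ fix an ample Cartier $A$ and, using Proposition \ref{tutte}\eqref{riduzgen}(a), a bound $\varepsilon_i>0$ so that $[A_{i1}\cdots A_{i,n-k}]-\delta[A^{n-k}]\in CI_k(X)$ whenever $0<\delta\le\varepsilon_i$; then pick $\varepsilon_\alpha>0$ small enough that the open ball $\|\beta\|<\varepsilon_\alpha$ is contained in $\varepsilon_i[A^{n-k}]-CI_k(X)$ for every $i$ — that is, $[A_{i1}\cdots A_{i,n-k}]-\beta\in CI_k(X)$ for all $\beta$ with $\|\beta\|<\varepsilon_\alpha$ and all $i$. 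This uses only that $CI_k(X)$ contains a neighborhood of $[A^{n-k}]$ scaled appropriately, i.e. the same convex-cone fact invoked in Proposition \ref{tutte}\eqref{riduzgen}. By Proposition \ref{tutte}\eqref{unione-i}-\eqref{unione-ii} such a class has empty numerical stable base locus.

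\textbf{Step 3: the inclusion.} Fix $\beta$ with $\|\beta\|<\varepsilon_\alpha$. For each $i$, write
\[
\alpha-\beta=\bigl(\alpha-[A_{i1}\cdots A_{i,n-k}]\bigr)+\bigl([A_{i1}\cdots A_{i,n-k}]-\beta\bigr),
\]
and apply Proposition \ref{tutte}\eqref{unione-ii}:
\[
\B_{\rm num}(\alpha-\beta)\subseteq\B_{\rm num}\bigl(\alpha-[A_{i1}\cdots A_{i,n-k}]\bigr)\cup\B_{\rm num}\bigl([A_{i1}\cdots A_{i,n-k}]-\beta\bigr)=\B_{\rm num}\bigl(\alpha-[A_{i1}\cdots A_{i,n-k}]\bigr),
\]
the last equality by Step 2. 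Intersecting over $i=1,\ldots,s$ gives $\B_{\rm num}(\alpha-\beta)\subseteq\B_+(\alpha)$, which is the first assertion. For the second assertion, take $\beta=[A_1\cdots A_{n-k}]$ with $\|[A_1\cdots A_{n-k}]\|<\varepsilon_\alpha$: then $\B_{\rm num}(\alpha-[A_1\cdots A_{n-k}])\subseteq\B_+(\alpha)$ by what we just proved, while the reverse inclusion $\B_+(\alpha)\subseteq\B_{\rm num}(\alpha-[A_1\cdots A_{n-k}])$ is immediate from Definition \ref{luoghibase}. Hence equality.

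\textbf{Main obstacle.} The only real point requiring care is Step 2: making precise that an open ball around $0$ in $N_k(X)$ can be chosen inside $\varepsilon_i[A^{n-k}]-CI_k(X)$ for all $i$ simultaneously. This is where smoothness enters (it guarantees $CI_k(X)$ is full-dimensional, so $[A^{n-k}]$ has a cone-neighborhood inside $CI_k(X)$), and one should phrase it via the argument of Proposition \ref{tutte}\eqref{riduzgen}(a) rather than by invoking interior points of $CI_k(X)$ abstractly. Everything else is a routine application of Proposition \ref{tutte}\eqref{unione-i}-\eqref{unione-ii} and Definition \ref{luoghibase}.
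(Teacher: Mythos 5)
Your Steps 1, 3, and 4 match the paper's proof exactly, but Step 2 contains a genuine error. You assert that since $X$ is smooth, $CI_k(X)$ is full-dimensional and open in $N_k(X)$, so that you can find an open ball around $0$ contained in $\varepsilon_i[A^{n-k}] - CI_k(X)$. This is false in general, and the paper explicitly points it out in the introduction: if $X$ has Picard rank one, then $CI_k(X)$ is a single ray regardless of $\dim N_k(X)$, so it certainly does not contain a neighborhood of any of its points. What Proposition \ref{tutte}\eqref{riduzgen}(a) actually gives is the much weaker statement that $[A_{i1}\cdots A_{i,n-k}] - \varepsilon[A'_1\cdots A'_{n-k}]$ stays in $CI_k(X)$ when you perturb by a small multiple of another complete-intersection class; it does not give openness against arbitrary classes $\beta\in N_k(X)$.

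The correct ingredient, and the one the paper uses, is that $[A_{i1}\cdots A_{i,n-k}]\in P_k(X)$ and that $P_k(X)$ is the interior of $NSA_k(X)$ when $X$ is smooth (Lemma \ref{int}, which rests on the openness and full-dimensionality of $P_k(X)$ from Theorem \ref{aperto}). This gives a genuine open ball: there is $\varepsilon_\alpha>0$ (uniform over the finitely many $i$) with $[A_{i1}\cdots A_{i,n-k}]-\beta\in NSA_k(X)$, hence $\B_{\rm num}([A_{i1}\cdots A_{i,n-k}]-\beta)=\emptyset$, for all $\|\beta\|<\varepsilon_\alpha$. Theorem \ref{aperto} in turn is proved via the cone $PCI_k(X)$ (push-forwards of complete intersections under finite flat covers), which \emph{is} full-dimensional by Nguyen-Bac Dang's result, precisely because $CI_k(X)$ alone is not. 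Once you swap $CI_k(X)$ for $NSA_k(X)$ in Step 2, the rest of your argument goes through verbatim and coincides with the paper's proof.
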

\begin{proof}
By Definition \ref{luoghibase} there are ample $\R$-Cartier $\R$-divisors $A_{i1}, \ldots, A_{i, n-k}$, $1 \leq i \leq s$ such that 
 \[ \B_+(\alpha)= \bigcap\limits_{i=1}^s \B_{\rm num}(\alpha - [A_{i1} \cdots A_{i, n-k}]). \]
For all $1 \leq i \leq s$, $[A_{i1} \cdots A_{i, n-k}] \in P_k(X)$, whence, by Lemma \ref{int}, there is an $\varepsilon_{\alpha} > 0$ (independent of $i$) such that $[A_{i1} \cdots A_{i, n-k}] - \beta \in NSA_k(X)$ for every $\beta \in N_k(X)$ such that $||\beta|| < \varepsilon_{\alpha}$. Then, using Proposition \ref{tutte}\eqref{unione-ii}, we have
\[ \B_{\rm num}(\alpha - \beta) = \B_{\rm num}(\alpha - [A_{i1} \cdots A_{i, n-k}] + [A_{i1} \cdots A_{i, n-k}] - \beta) \subseteq  \]
\[ \subseteq \B_{\rm num}(\alpha - [A_{i1} \cdots A_{i, n-k}]) \cup \B_{\rm num}([A_{i1} \cdots A_{i, n-k}] - \beta) =  \B_{\rm num}(\alpha - [A_{i1} \cdots A_{i, n-k}])\]
Therefore $\B_{\rm num}(\alpha - \beta) \subseteq \bigcap\limits_{i=1}^s \B_{\rm num}(\alpha - [A_{i1} \cdots A_{i, n-k}]) = \B_+(\alpha)$. 

Now if $A_1, \ldots, A_{n-k}$ are ample $\R$-Cartier $\R$-divisors on $X$ such that $||[A_1 \cdots A_{n-k}]|| < \varepsilon_{\alpha}$, then, by definition, $\B_+(\alpha) \subseteq \B_{\rm num}(\alpha - [A_1 \cdots A_{n-k}])$, giving the other inclusion. 
\end{proof}

\begin{cor}
\label{riduz+}
Assume that $X$ is smooth. Let $\alpha \in N_k(X)$ and let $\varepsilon_{\alpha}$ be as in Proposition \ref{riduz}. Then 
\[ \B_+(\alpha - \beta) \subseteq \B_+(\alpha) \]
for every $\beta \in N_k(X)$ such that $||\beta|| < \varepsilon_{\alpha}$ and equality holds if $\beta = [A_1 \cdots A_{n-k}]$ 
where $A_1, \ldots, A_{n-k}$ are ample $\R$-Cartier $\R$-divisors on $X$.
\end{cor}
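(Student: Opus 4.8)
The plan is to deduce Corollary \ref{riduz+} directly from Proposition \ref{riduz} together with the sub-additivity of $\B_+$ proved in Proposition \ref{tutte}\eqref{unione2}. First I would fix $\alpha \in N_k(X)$ and let $\varepsilon_\alpha > 0$ be the constant furnished by Proposition \ref{riduz}. Take $\beta \in N_k(X)$ with $\|\beta\| < \varepsilon_\alpha$. The key observation is that $\B_+(\alpha - \beta)$ can be computed using a very small ample perturbation, by Proposition \ref{tutte}\eqref{riduzgen-b}: choosing an ample Cartier divisor $A$ and $\eta > 0$ small, we have $\B_+(\alpha - \beta) = \B_{\rm num}(\alpha - \beta - \eta[A^{n-k}])$. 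If $\eta$ is chosen small enough that $\|\beta + \eta[A^{n-k}]\| < \varepsilon_\alpha$ as well (possible since $\|\beta\| < \varepsilon_\alpha$ strictly), then Proposition \ref{riduz} applied to the class $\beta' := \beta + \eta[A^{n-k}]$ gives $\B_{\rm num}(\alpha - \beta') \subseteq \B_+(\alpha)$. Hence $\B_+(\alpha - \beta) \subseteq \B_+(\alpha)$, which is the first assertion.

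For the equality statement, suppose $\beta = [A_1 \cdots A_{n-k}]$ with the $A_i$ ample $\R$-Cartier $\R$-divisors and $\|\beta\| < \varepsilon_\alpha$. We already have $\B_+(\alpha - \beta) \subseteq \B_+(\alpha)$. For the reverse inclusion, write $\alpha = (\alpha - \beta) + \beta$ and apply Proposition \ref{tutte}\eqref{unione2}:
\[ \B_+(\alpha) = \B_+\big((\alpha - \beta) + [A_1 \cdots A_{n-k}]\big) \subseteq \B_+(\alpha - \beta) \cup \B_+([A_1 \cdots A_{n-k}]). \]
Since $[A_1 \cdots A_{n-k}] \in P_k(X)$, Theorem \ref{b+}(ii) gives $\B_+([A_1 \cdots A_{n-k}]) = \emptyset$, so $\B_+(\alpha) \subseteq \B_+(\alpha - \beta)$ and equality follows.

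I do not expect a serious obstacle here: the statement is a formal consequence of results already established, and the only mild subtlety is making sure the norm estimate $\|\beta + \eta[A^{n-k}]\| < \varepsilon_\alpha$ can be achieved simultaneously with the smallness of $\eta$ needed to invoke Proposition \ref{tutte}\eqref{riduzgen-b}; this is immediate since $\|\beta\| < \varepsilon_\alpha$ is a strict inequality, leaving room to absorb a small multiple of $[A^{n-k}]$. Alternatively, one can bypass the perturbation entirely and argue that for $\beta$ with $\|\beta\| < \varepsilon_\alpha$ one also has $\B_+(\alpha - \beta) = \B_{\rm num}(\alpha - \beta)$ whenever $\alpha - \beta$ lies in the appropriate range; but routing through \eqref{riduzgen-b} is cleanest and keeps the argument within the already-proven toolkit.
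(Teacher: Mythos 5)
Your proof is correct. For the inclusion $\B_+(\alpha-\beta)\subseteq\B_+(\alpha)$, you and the paper argue in essentially the same way: compute $\B_+(\alpha-\beta)$ via a very small ample perturbation and then absorb that perturbation into the $\varepsilon_\alpha$-ball around $\alpha$; the only cosmetic difference is that you invoke Proposition \ref{tutte}\eqref{riduzgen-b} for the perturbation step, while the paper reuses Proposition \ref{riduz} with the constant $\varepsilon_{\alpha-\beta}$ and takes $\|[A'_1\cdots A'_{n-k}]\|<\min\{\varepsilon_\alpha-\|\beta\|,\varepsilon_{\alpha-\beta}\}$. For the equality, your route is genuinely different and arguably cleaner: the paper sandwiches, using $\B_+(\alpha)\subseteq\B_{\rm num}(\alpha-[A_1\cdots A_{n-k}])=\B_{\rm num}(\alpha-\beta)\subseteq\B_+(\alpha-\beta)$ (the definition of $\B_+$ plus Proposition \ref{tutte}\eqref{nest}); you instead decompose $\alpha=(\alpha-\beta)+[A_1\cdots A_{n-k}]$ and apply sub-additivity \eqref{unione2} together with Theorem \ref{b+}(ii), which requires noting $[A_1\cdots A_{n-k}]\in P_k(X)$ (immediate from the definition of $P_k(X)$ with $\beta=0$, and also contained in Proposition \ref{coni3}). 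Both arguments give the reverse inclusion $\B_+(\alpha)\subseteq\B_+(\alpha-[A_1\cdots A_{n-k}])$ without actually needing the norm bound on $\beta$; the bound is only needed for the forward inclusion.
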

\begin{proof}
Let $\beta \in N_k(X)$ be such that $||\beta|| < \varepsilon_{\alpha}$. Pick $A'_1, \ldots, A'_{n-k}$ ample $\R$-Cartier $\R$-divisors on $X$ such that 
\[ ||[A'_1 \cdots A'_{n-k}]|| < \min\{\varepsilon_{\alpha}- ||\beta||, \varepsilon_{\alpha - \beta}\}. \]
Then $\B_+(\alpha - \beta) = \B_{\rm num}(\alpha - \beta - [A'_1 \cdots A'_{n-k}])$ by Proposition \ref{riduz}. On the other hand,
$|| \beta + [A'_1 \cdots A'_{n-k}]|| < \varepsilon_{\alpha}$, whence $\B_{\rm num}(\alpha - \beta - [A'_1 \cdots A'_{n-k}]) \subseteq \B_+(\alpha)$ again by Proposition \ref{riduz} and therefore $\B_+(\alpha - \beta) \subseteq \B_+(\alpha)$. 

Now if $\beta = [A_1 \cdots A_{n-k}]$ where $A_1, \ldots, A_{n-k}$ are ample $\R$-Cartier $\R$-divisors on $X$ such that $||\beta|| < \varepsilon_{\alpha}$, then, by Proposition \ref{riduz} and Proposition \ref{tutte}\eqref{nest}

\vskip .4cm

\hskip .8cm $\B_+(\alpha - \beta) \subseteq \B_+(\alpha) \subseteq \B_{\rm num}(\alpha - [A_1 \cdots A_{n-k}]) = \B_{\rm num}(\alpha - \beta) \subseteq \B_+(\alpha - \beta).$
\end{proof}

The following is the analogue of \cite[Prop.~1.21]{elmnp1}.

\begin{prop}
\label{121}
Assume that $X$ is smooth. Let $\alpha \in N_k(X)$ and let $\varepsilon_{\alpha}$ be as in Proposition \ref{riduz}. Then 
\[ \B_-(\alpha - [A_1 \cdots A_{n-k}]) = \B_+(\alpha - [A_1 \cdots A_{n-k}]) = \B_+(\alpha) \]
for every $A_1, \ldots, A_{n-k}$ ample $\R$-Cartier $\R$-divisors on $X$ such that $||[A_1 \cdots A_{n-k}]|| < \varepsilon_{\alpha}$.
\end{prop}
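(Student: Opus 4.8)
The plan is to prove the chain of equalities $\B_-(\alpha - [A_1 \cdots A_{n-k}]) = \B_+(\alpha - [A_1 \cdots A_{n-k}]) = \B_+(\alpha)$ by reducing everything to the numerical base locus $\B_{\rm num}(\alpha - \gamma)$ for a suitable single perturbing class $\gamma$, which is exactly what Proposition \ref{riduz} delivers when $\|\gamma\| < \varepsilon_\alpha$. The rightmost equality is already Corollary \ref{riduz+}: since $\beta = [A_1 \cdots A_{n-k}]$ with $\|\beta\| < \varepsilon_\alpha$, we have $\B_+(\alpha - [A_1 \cdots A_{n-k}]) = \B_+(\alpha)$. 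So the real content is the leftmost equality, i.e. that for such a perturbation the restricted base locus already agrees with the augmented one. In view of Proposition \ref{tutte}\eqref{nest} the inclusion $\B_-(\alpha - [A_1 \cdots A_{n-k}]) \subseteq \B_+(\alpha - [A_1 \cdots A_{n-k}])$ is automatic, so I only need the reverse inclusion.

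**Key steps, in order.** First, I would fix $A_1, \ldots, A_{n-k}$ ample with $\|[A_1 \cdots A_{n-k}]\| < \varepsilon_\alpha$ and set $\alpha' = \alpha - [A_1 \cdots A_{n-k}]$. Second, I would invoke Proposition \ref{tutte}\eqref{numerabile} to write $\B_-(\alpha') = \bigcup_{m \in \NN^+} \B_{\rm num}\big(\alpha' + \tfrac{1}{m}[A_1 \cdots A_{n-k}]\big)$. Third — and this is the crux — I would choose $m_0$ large enough that $\tfrac{1}{m_0} < 1$, so that $\alpha' + \tfrac{1}{m}[A_1 \cdots A_{n-k}] = \alpha - (1 - \tfrac{1}{m})[A_1 \cdots A_{n-k}]$ is still of the form $\alpha$ minus an intersection of $n-k$ ample $\R$-Cartier $\R$-divisors (namely $\big((1-\tfrac1m)^{1/(n-k)}A_1\big) A_2 \cdots A_{n-k}$, or more symmetrically scaling each factor by $(1-\tfrac1m)^{1/(n-k)}$) of norm still $< \varepsilon_\alpha$ for all $m \ge 1$, since scaling down only decreases the norm. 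Then Proposition \ref{riduz} applies to each such perturbation and gives $\B_{\rm num}\big(\alpha - (1-\tfrac1m)[A_1 \cdots A_{n-k}]\big) = \B_+(\alpha)$ for every $m$. Fourth, taking the union over $m$ yields $\B_-(\alpha') \subseteq \B_+(\alpha) = \B_+(\alpha')$ by Corollary \ref{riduz+}, and combined with the automatic inclusion from step giving \eqref{nest} we get equality throughout.

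**Main obstacle.** The only delicate point is making sure that every perturbed class appearing in the union $\bigcup_m \B_{\rm num}(\alpha' + \tfrac1m[A_1\cdots A_{n-k}])$ has perturbing part of norm below the threshold $\varepsilon_\alpha$ and is genuinely an intersection of $n-k$ ample $\R$-divisors (so that Proposition \ref{riduz} is literally applicable). This is handled by the observation that $(1-\tfrac1m) \in (0,1)$ for all $m \ge 1$, hence $\|(1-\tfrac1m)[A_1 \cdots A_{n-k}]\| \le \|[A_1 \cdots A_{n-k}]\| < \varepsilon_\alpha$, and an intersection of ample $\R$-divisors scaled by a positive constant is again such an intersection; no further choices are needed. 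Everything else is bookkeeping with Proposition \ref{tutte}, Proposition \ref{riduz}, and Corollary \ref{riduz+}.
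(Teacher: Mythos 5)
Your proof is correct in substance but takes a genuinely different route from the paper. The paper proves the chain
\[ \B_+(\alpha) = \B_+(\alpha - \beta) =  \B_+\bigl(\alpha - 2 \beta + \beta\bigr) \subseteq \B_-(\alpha - 2 \beta) \subseteq \B_+(\alpha - 2 \beta) =  \B_+(\alpha), \]
with $\beta = \tfrac12 [A_1 \cdots A_{n-k}]$, using Corollary \ref{riduz+} for the two extreme equalities, Proposition \ref{tutte}\eqref{unb+} (the description of $\B_-$ as a union of $\B_+$'s over ample perturbations) for the inclusion $\B_+(\alpha - 2\beta + \beta) \subseteq \B_-(\alpha - 2\beta)$, and Proposition \ref{tutte}\eqref{nest} to close the loop. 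You instead decompose $\B_-(\alpha')$ via Proposition \ref{tutte}\eqref{numerabile} as a countable union of $\B_{\rm num}$'s and then identify each term with $\B_+(\alpha)$ by Proposition \ref{riduz}. Both routes hinge on Corollary \ref{riduz+} for the middle equality, but the paper's ``$\alpha - 2\beta + \beta$'' trick plus \eqref{unb+} is slicker, while your union-by-union identification is more transparent about why the restricted base locus collapses onto the augmented one.

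Two minor points in your write-up deserve attention, neither of which undermines the argument. First, for $m = 1$ the coefficient $1 - \tfrac1m$ is $0$, so $\alpha' + \tfrac1m[A_1\cdots A_{n-k}] = \alpha$ is not of the form ``$\alpha$ minus an intersection of $n-k$ ample $\R$-divisors,'' and Proposition \ref{riduz} gives only $\B_{\rm num}(\alpha) \subseteq \B_+(\alpha)$ there (not equality); your claim that the term ``is still of the form'' applies only for $m \ge 2$, and since the $m=1$ term is contained in the others anyway, this is harmless but should be said. Second, and more noticeably, your step four asserts that ``taking the union over $m$ yields $\B_-(\alpha') \subseteq \B_+(\alpha)$'' and then proposes to combine this with the automatic inclusion from \eqref{nest}. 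But the automatic inclusion is $\B_-(\alpha') \subseteq \B_+(\alpha')$, which is the same direction; combining two inclusions of the same sign does not give equality. What actually finishes your proof is that the equality in step three, applied for any single $m \ge 2$, already shows $\B_+(\alpha) = \B_{\rm num}\bigl(\alpha - (1 - \tfrac1m)[A_1\cdots A_{n-k}]\bigr) \subseteq \B_-(\alpha')$, i.e.\ the reverse inclusion, which is exactly the nontrivial direction. You should phrase step four as deriving $\B_-(\alpha') \supseteq \B_+(\alpha)$ from the equality in step three, then cite \eqref{nest} and Corollary \ref{riduz+} for the other inclusion.
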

\begin{proof}
For $i = 1, \ldots, n-k$, let $A'_i = (\frac{1}{2})^{\frac{1}{n-k}}A_i$ and let $\beta = \frac{1}{2} [A_1 \cdots A_{n-k}] = [A'_1 \cdots A'_{n-k}]$. Then Corollary \ref{riduz+} and Proposition \ref{tutte}\eqref{unb+}-\eqref{nest} give
\[ \B_+(\alpha) = \B_+(\alpha - \beta) =  \B_+(\alpha - 2 \beta + \beta) \subseteq \B_-(\alpha - 2 \beta) \subseteq \B_+(\alpha - 2 \beta) =  \B_+(\alpha) \]
whence $\B_-(\alpha - 2 \beta) = \B_+(\alpha - 2 \beta) =  \B_+(\alpha)$.
\end{proof}

\section{Stable cycles} 
\label{st}

In \cite[\S 1]{elmnp1} stable divisors were defined and studied. We prove some analogues for cycles.

\begin{defi}
Let $\alpha \in N_k(X)$. We say that $\alpha$ is {\it stable} if $\B_-(\alpha) = \B_+(\alpha)$. 
\end{defi}

As in \cite[Prop.~1.24]{elmnp1}, we can give several properties equivalent to stability.

\begin{prop}
\label{stab}
Assume that $X$ is smooth, let $\alpha \in N_k(X)$ and assume that the base field is uncountable. The following are equivalent:
\begin{itemize}
\item[(i)] $\alpha$ is stable; 
\item[(ii)] there are ample $\R$-Cartier $\R$-divisors $A_1, \ldots, A_{n-k}$ such that 

$\B_+(\alpha) = \B_+(\alpha + [A_1 \cdots A_{n-k}])$;
\item[(iii)] there is an $\varepsilon > 0$ such that $\B_+(\alpha) = \B_+(\alpha + \beta)$
for every $\beta \in N_k(X)$ such that $||\beta|| < \varepsilon$;
\item[(iv)] there is an $\varepsilon > 0$ such that $\B_-(\alpha) = \B_-(\alpha + \beta)$
for every $\beta \in N_k(X)$ such that $||\beta|| < \varepsilon$;
\item[(v)] there is an $\varepsilon > 0$ such that $\B_{\rm num}(\alpha + \beta) = \B_{\rm num}(\alpha + \beta')$
for every $\beta, \beta' \in N_k(X)$ such that $||\beta|| < \varepsilon$, $||\beta'|| < \varepsilon$.
\end{itemize}
\end{prop}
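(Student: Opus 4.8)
The plan is to prove the chain of implications $(i)\Rightarrow(ii)\Rightarrow(iii)\Rightarrow(iv)\Rightarrow(v)\Rightarrow(i)$, closely following the divisor case in \cite[Prop.~1.24]{elmnp1} but substituting the cycle-theoretic tools developed above, notably Proposition \ref{riduz}, Corollary \ref{riduz+}, Proposition \ref{121}, and the monotonicity/countability statements in Proposition \ref{tutte} and Proposition \ref{unb+num}. The uncountability of the base field enters only through those places where we need $\B_-(\alpha)$ to be a \emph{countable} nested union of augmented base loci, so I expect to invoke Proposition \ref{unb+num} (which already uses Theorem \ref{b+}, hence implicitly the relevant finiteness) and Proposition \ref{tutte}\eqref{numerabile}.

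\textbf{The easy implications.} First I would note $(i)\Rightarrow(ii)$: by Proposition \ref{121}, choosing any ample $\R$-Cartier $\R$-divisors $A_1,\ldots,A_{n-k}$ with $\|[A_1\cdots A_{n-k}]\|<\varepsilon_\alpha$ gives $\B_+(\alpha)=\B_+(\alpha-[A_1\cdots A_{n-k}])$; combined with stability $\B_-(\alpha)=\B_+(\alpha)$ and the general inclusions of Proposition \ref{tutte}\eqref{nest}, \eqref{unb+} one sandwiches $\B_+(\alpha+[A_1\cdots A_{n-k}])$ between $\B_+(\alpha)$ and $\B_-(\alpha)$, forcing equality. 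For $(ii)\Rightarrow(iii)$: given ample $A_1,\ldots,A_{n-k}$ realizing (ii), I would set $\varepsilon$ small enough that $\pm\beta+[\tfrac12 A_1\cdots A_{n-k}]$ (appropriately scaled) lies in $NSA_k(X)$ for $\|\beta\|<\varepsilon$ using Lemma \ref{int}, and then run the Corollary \ref{riduz+} monotonicity both ways: $\B_+(\alpha+\beta)\subseteq\B_+(\alpha+\beta+(\text{small c.i.}))=\B_+(\alpha)$ via (ii), and symmetrically $\B_+(\alpha)\subseteq\B_+(\alpha+\beta)$, shrinking $\varepsilon$ below $\varepsilon_\alpha$ and $\varepsilon_{\alpha+\beta}$ as needed. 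The implication $(iii)\Rightarrow(iv)$ follows by writing $\B_-$ as the countable union $\bigcup_m\B_+(\cdot+\tfrac1m[A_1\cdots A_{n-k}])$ via Proposition \ref{unb+num} and applying (iii) termwise once $\tfrac1m\|[A_1\cdots A_{n-k}]\|<\varepsilon$; the finitely many remaining large terms are handled by monotonicity of $\B_+$ in the ample direction (Corollary \ref{riduz+}). Then $(iv)\Rightarrow(v)$ is immediate from $\B_{\rm num}(\gamma)\subseteq\B_-(\gamma+[A^{n-k}])$ together with $\B_-(\gamma)\subseteq\B_{\rm num}(\gamma)$ (Proposition \ref{tutte}\eqref{nest}) and the stability of $\B_-$ on the ball, comparing $\B_{\rm num}(\alpha+\beta)$ and $\B_{\rm num}(\alpha+\beta')$ through $\B_-$; alternatively, I would pass through $\B_{\rm num}(\alpha+\beta)=\B_{\rm num}(\alpha+\beta\pm\varepsilon'[A^{n-k}])$ for suitably small $\varepsilon'$ using Proposition \ref{tutte}\eqref{unione-ii} and Proposition \ref{tutte}\eqref{riduzgen}(a).

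\textbf{Closing the loop.} The last implication $(v)\Rightarrow(i)$ is where I would spend the most care. Given the $\varepsilon$ from (v), I would pick an ample Cartier divisor $A$ and a small $t>0$ with $\|t[A^{n-k}]\|<\varepsilon$, and show $\B_+(\alpha)=\B_{\rm num}(\alpha-t[A^{n-k}])$ (shrinking $t$ below $\varepsilon_\alpha$ and invoking Proposition \ref{riduz}) while $\B_-(\alpha)=\B_{\rm num}(\alpha+t[A^{n-k}])$ (from Proposition \ref{tutte}\eqref{numerabile} applied to the nested union, using (v) to see the union stabilizes). Then (v) with $\beta=-t[A^{n-k}]$, $\beta'=t[A^{n-k}]$ gives $\B_{\rm num}(\alpha-t[A^{n-k}])=\B_{\rm num}(\alpha+t[A^{n-k}])$, i.e.\ $\B_+(\alpha)=\B_-(\alpha)$.

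\textbf{Main obstacle.} The step I expect to be genuinely delicate is $(iii)\Rightarrow(iv)$ (and the parallel reduction of $\B_-$ to a \emph{uniformly controlled} stabilizing union in $(v)\Rightarrow(i)$): while Proposition \ref{unb+num} tells us $\B_-(\alpha)$ is a countable increasing union of $\B_+(\alpha+\tfrac1m[A_1\cdots A_{n-k}])$, I must ensure that the hypothesis (iii) — stability of $\B_+$ only on a \emph{fixed} $\varepsilon$-ball around $\alpha$ — suffices to conclude $\B_-(\alpha+\beta)=\B_-(\alpha)$ for all small $\beta$, which requires keeping the perturbations $\tfrac1m[A_1\cdots A_{n-k}]+\beta$ inside that ball for all but finitely many $m$ and handling the finite remainder by the nesting $\B_+(\alpha+\gamma)\subseteq\B_+(\alpha+\gamma')$ whenever $\gamma'-\gamma\in CI_k(X)$, i.e.\ by Corollary \ref{riduz+}. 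This bookkeeping with two interacting small parameters, together with the need to stay within the domains of validity $\varepsilon_\alpha,\varepsilon_{\alpha+\beta}$ of Proposition \ref{riduz}, is the real content; everything else is a formal consequence of the monotonicity lemmas already established.
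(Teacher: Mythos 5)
Your implication chain and choice of auxiliary lemmas match the paper's approach, and you correctly flag uncountability as entering through the countable-union descriptions of $\B_-$. However there is a genuine gap in $(i)\Rightarrow(ii)$, and a reversed inclusion in $(iv)\Rightarrow(v)$.

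For $(i)\Rightarrow(ii)$, your sandwich only establishes one inclusion. From Proposition \ref{tutte}\eqref{unb+} and stability you get $\B_+(\alpha+[A_1\cdots A_{n-k}])\subseteq\B_-(\alpha)=\B_+(\alpha)$, but nothing in what you cite produces the converse $\B_+(\alpha)\subseteq\B_+(\alpha+[A_1\cdots A_{n-k}])$. Applying Proposition \ref{121} (or Corollary \ref{riduz+}) centered at $\alpha+[A_1\cdots A_{n-k}]$ would give this, but only under $\|[A_1\cdots A_{n-k}]\|<\varepsilon_{\alpha+[A_1\cdots A_{n-k}]}$, and that threshold depends on the perturbation itself; nothing rules out $\varepsilon_{\alpha+[A_1\cdots A_{n-k}]}$ shrinking faster than $\|[A_1\cdots A_{n-k}]\|$ as the latter tends to $0$, so this is circular. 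The paper closes the gap differently and this is the one place where uncountability is indispensable for this implication: by Proposition \ref{unb+num}, $\B_-(\alpha)$ is an increasing countable union of the Zariski-closed sets $\B_+(\alpha+\tfrac1m[A'_1\cdots A'_{n-k}])$; since $\B_-(\alpha)=\B_+(\alpha)$ is itself closed and the field is uncountable, the union must stabilize at some finite stage $m$, and that $m$ furnishes the divisors in $(ii)$. You mention Proposition \ref{unb+num} in your preamble, but it is not actually deployed in your $(i)\Rightarrow(ii)$ argument, where it is precisely what is needed.

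In $(iv)\Rightarrow(v)$ the inclusion $\B_{\rm num}(\gamma)\subseteq\B_-(\gamma+[A^{n-k}])$ you invoke is the wrong way around: in fact $\B_-(\gamma+[A^{n-k}])\subseteq\B_{\rm num}(\gamma+[A^{n-k}])\subseteq\B_{\rm num}(\gamma)$. The usable inclusion is $\B_{\rm num}(\gamma)\subseteq\B_-(\gamma-[A^{n-k}])$, since $\B_-(\gamma-[A^{n-k}])$ is by Definition \ref{luoghibase} a union which contains the term $\B_{\rm num}(\gamma)$. With that fix, your idea works: $\B_-(\alpha)=\B_-(\alpha+\beta)\subseteq\B_{\rm num}(\alpha+\beta)\subseteq\B_-(\alpha+\beta-[A^{n-k}])=\B_-(\alpha)$ once $\|\beta\|$ and $\|[A^{n-k}]\|$ are small enough to stay in the ball where $(iv)$ applies; this differs slightly from the paper, which first derives $\B_+(\alpha)=\B_-(\alpha)$ via Proposition \ref{121} and then sandwiches $\B_{\rm num}(\alpha+\beta)$ between these equal sets using Corollary \ref{riduz+}. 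Your treatment of $(ii)\Rightarrow(iii)$, $(iii)\Rightarrow(iv)$ (your termwise-plus-monotonicity argument is a legitimate alternative to the paper's intermediate identity \eqref{a}), and $(v)\Rightarrow(i)$ is essentially as in the paper.
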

\begin{proof}
Assume that (i) holds and pick $A'_1, \ldots, A'_{n-k}$ ample $\R$-Cartier $\R$-divisors on $X$. Since $\B_-(\alpha) = \B_+(\alpha)$ is closed, by Proposition \ref{unb+num} we can find an $m \in \NN^+$ such that $\B_-(\alpha) = \B_+(\alpha + \frac{1}{m} [A'_1 \cdots A'_{n-k}])$. Setting $A_i = (\frac{1}{m})^{\frac{1}{n-k}} A'_i$ we get (ii). 

Now assume (ii). By Corollary \ref{riduz+} and Theorem \ref{aperto}, there is an $\varepsilon > 0$ such that
\[ \B_+(\alpha + \beta) \subseteq \B_+(\alpha) \ \mbox{and} \  [A_1 \cdots A_{n-k}] - \beta \in P_k(X) \] 
for every $\beta \in N_k(X)$ such that $||\beta|| < \varepsilon$. By Proposition \ref{tutte}\eqref{unione2} and Theorem \ref{b+}(ii) we get
\[ \B_+(\alpha) = \B_+(\alpha + [A_1 \cdots A_{n-k}]) \subseteq \B_+(\alpha + \beta) \cup \B_+([A_1 \cdots A_{n-k}] - \beta) = \B_+(\alpha + \beta) \subseteq \B_+(\alpha) \]
whence (iii). 

Suppose now (iii) holds and let $\beta \in N_k(X)$ such that $||\beta|| < \varepsilon$. We first prove that
\begin{equation}
\label{a}
\B_-(\alpha + \beta) =  \bigcup\limits_{A_1, \ldots, A_{n-k}} \B_+(\alpha + \beta + [A_1 \cdots A_{n-k}])
\end{equation}
where $A_1, \ldots, A_{n-k}$ run among all ample $\R$-Cartier $\R$-divisors on $X$ such that
\[ ||[A_1 \cdots A_{n-k}]|| < \varepsilon - ||\beta||. \] 
To see \eqref{a} let $A'_1, \ldots, A'_{n-k}$ be any ample $\R$-Cartier $\R$-divisors on $X$. By Theorem \ref{aperto} we can choose small ample $\R$-Cartier $\R$-divisors $A''_1, \ldots, A''_{n-k}$ such that
\[ ||[A''_1 \cdots A''_{n-k}]|| < \varepsilon - ||\beta|| \ \mbox{and} \ [A'_1 \cdots A'_{n-k}] - [A''_1 \cdots A''_{n-k}] \in P_k(X). \] 
By Proposition \ref{tutte}\eqref{unione-i}-\eqref{unione-ii}-\eqref{nest} we get
\[ \B_{\rm num}(\alpha + \beta + [A'_1 \cdots A'_{n-k}]) \subseteq \B_{\rm num}(\alpha + \beta + [A''_1 \cdots A''_{n-k}]) \cup \B_{\rm num}([A'_1 \cdots A'_{n-k}] - [A''_1 \cdots A''_{n-k}]) = \]
\[ = \B_{\rm num}(\alpha + \beta + [A''_1 \cdots A''_{n-k}]) \subseteq \B_+(\alpha + \beta + [A''_1 \cdots A''_{n-k}]) \]
whence we get the inclusion "$\subseteq$" in \eqref{a}. Now let $A_1, \ldots, A_{n-k}$ be any ample $\R$-Cartier $\R$-divisors on $X$ such that $||[A_1 \cdots A_{n-k}]|| < \varepsilon - ||\beta||$. Let $m \gg 0$ be such that 
\[ ||\frac{1}{m}[A_1 \cdots A_{n-k}]|| < \varepsilon_{\alpha + \beta + [A_1 \cdots A_{n-k}]}. \]
By Proposition \ref{riduz} we have
\[ \B_+(\alpha + \beta + [A_1 \cdots A_{n-k}]) = \B_{\rm num}(\alpha + \beta +[A_1 \cdots A_{n-k}] - \frac{1}{m}[A_1 \cdots A_{n-k}]) = \]
\[ = \B_{\rm num}(\alpha + \beta + \frac{m-1}{m}[A_1 \cdots A_{n-k}]) \subseteq  \B_-(\alpha + \beta) \]
and this proves the inclusion "$\supseteq$" in \eqref{a}, thus giving \eqref{a}. On the other hand, for every ample $\R$-Cartier $\R$-divisors $A_1, \ldots, A_{n-k}$ such that $||[A_1 \cdots A_{n-k}]|| < \varepsilon - ||\beta||$ we have that $||\beta + [A_1 \cdots A_{n-k}]|| < \varepsilon$, whence, by (iii), $\B_+(\alpha + \beta + [A_1 \cdots A_{n-k}]) = \B_+(\alpha)$ and therefore \eqref{a} gives that
\begin{equation}
\label{b}
\B_-(\alpha + \beta) = \B_+(\alpha) \ \mbox{for every} \ \beta \in N_k(X) \ \mbox{such that} \ ||\beta|| < \varepsilon.
\end{equation}
In particular this holds for $\beta = 0$, so that $\B_-(\alpha) = \B_+(\alpha)$ and \eqref{b} gives (iv). 

Assume (iv) and let $\varepsilon' > 0$ be such that $\varepsilon' \leq \min\{\varepsilon, \varepsilon_{\alpha}\}$. Let $\beta \in N_k(X)$ such that $||\beta|| < \varepsilon'$. Let $A_1, \ldots, A_{n-k}$ be sufficiently small ample $\R$-Cartier $\R$-divisors so that $||[A_1 \cdots A_{n-k}]|| < \varepsilon'$. By (iv) and Proposition \ref{121} we get
\[ \B_-(\alpha) = \B_-(\alpha - [A_1 \cdots A_{n-k}]) = \B_+(\alpha). \]
Then, by (iv), Propositon \ref{tutte}\eqref{nest} and Corollary \ref{riduz+} we find
\[ \B_+(\alpha) = \B_-(\alpha) =  \B_-(\alpha + \beta) \subseteq \B_{\rm num}(\alpha + \beta) \subseteq \B_+(\alpha + \beta) \subseteq \B_+(\alpha) \]
whence $\B_{\rm num}(\alpha + \beta) = \B_+(\alpha)$ for every $\beta \in N_k(X)$ such that $||\beta|| < \varepsilon'$ and (v) holds.

Finally assume (v) and let $A_1, \ldots, A_{n-k}$ be sufficiently small ample $\R$-Cartier $\R$-divisors so that $||[A_1 \cdots A_{n-k}]|| < \min\{\varepsilon, \varepsilon_{\alpha}\}$. Now Proposition \ref{riduz}, (v) and Propositon \ref{tutte}\eqref{nest} give
\[ \B_+(\alpha) = \B_{\rm num}(\alpha - [A_1 \cdots A_{n-k}]) = \B_{\rm num}(\alpha + [A_1 \cdots A_{n-k}]) \subseteq \B_-(\alpha) \subseteq \B_+(\alpha) \]
and this gives (i).
\end{proof} 

\begin{remark} 
\label{altro}
Let $\alpha \in N_k(X)$. If $\alpha$ is not pseudoeffective, then it is stable by Theorem \ref{b-}(i) and Propositon \ref{tutte}\eqref{nest}. If $\alpha$ is pseudoeffective but not big, and the base field is uncountable, then it is not stable by Theorems \ref{b+}(i) and \ref{b-}(ii).
\end{remark}

\begin{cor}
\label{dense}
Assume that $X$ is smooth and that the base field is uncountable. Then the cone of stable classes is open and dense in $N_k(X)$.
\end{cor}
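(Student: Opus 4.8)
The plan is to show the two properties separately: openness of the stable cone is essentially immediate from the equivalent formulations of stability already proved, while density requires a perturbation argument together with Theorem \ref{b+}.

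\textit{Openness.} I would use Proposition \ref{stab}(iii) (or equivalently (iv)): a class $\alpha$ is stable if and only if there exists $\varepsilon>0$ such that $\B_+(\alpha+\beta)=\B_+(\alpha)$ for all $\beta$ with $||\beta||<\varepsilon$. So suppose $\alpha$ is stable, with such an $\varepsilon$. Then for any $\gamma$ with $||\gamma||<\varepsilon/2$, and any $\beta$ with $||\beta||<\varepsilon/2$, we have $||\gamma+\beta||<\varepsilon$, so $\B_+((\alpha+\gamma)+\beta)=\B_+(\alpha)=\B_+(\alpha+\gamma)$ (taking $\beta=0$ gives the last equality). Hence $\alpha+\gamma$ is again stable by Proposition \ref{stab}(iii). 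Thus the whole ball of radius $\varepsilon/2$ around $\alpha$ consists of stable classes, so the cone of stable classes is open.

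\textit{Density.} Here I would argue that every $\alpha\in N_k(X)$ can be perturbed by an arbitrarily small class into a stable one. Fix an ample Cartier divisor $A$. By Remark \ref{altro}, if $\alpha$ is not pseudoeffective then $\alpha$ is already stable; if $\alpha$ is big then, by Theorem \ref{b+}(i) and the fact that big is an open condition, a small neighborhood of $\alpha$ consists of big classes, and I would show these are stable — indeed for $\alpha$ big one checks via Proposition \ref{tutte}\eqref{riduzgen}(b) and Theorem \ref{b+}(i) that $\B_+(\alpha+\beta)=\emptyset$ wait, more carefully: $\B_+$ of a big class need not be empty, so instead I use that the big cone is open and that on a suitable small neighborhood $\B_+$ is locally constant. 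The only remaining case is $\alpha$ pseudoeffective but not big, i.e.\ $\alpha$ on the boundary of $\overline{\Eff_k(X)}$; but then $\alpha+\tfrac1m[A^{n-k}]$ is big for every $m$, hence lies in the (open) big locus, and — modulo the local-constancy of $\B_+$ on an open set of big classes — is stable. Since $\tfrac1m[A^{n-k}]\to 0$, this exhibits stable classes arbitrarily close to $\alpha$. Combined with the non-pseudoeffective case (which is an open set of stable classes by Theorem \ref{b-}(i) and Proposition \ref{tutte}\eqref{nest}), this shows the stable cone is dense.

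\textit{Main obstacle.} The delicate point is establishing that $\B_+$ is locally constant on an open neighborhood of a big class, equivalently that big classes are stable; I expect this to follow from combining Corollary \ref{riduz+} (which gives $\B_+(\alpha-\beta)\subseteq\B_+(\alpha)$ for small $\beta$, with equality along complete-intersection perturbations) with Proposition \ref{121} and Proposition \ref{stab}(ii), but pinning down the uncountability hypothesis — needed exactly as in Theorem \ref{b-}(ii) and Proposition \ref{stab} — is the step to handle with care. Once that is in place, density is just the observation that $\alpha+\tfrac1m[A^{n-k}]$ is big for $m\gg0$ and converges to $\alpha$, and that the non-pseudoeffective classes form an open (hence in particular "dense in its own closure") stable region, so every point of $N_k(X)$ is a limit of stable classes.
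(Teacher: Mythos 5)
Your openness argument is correct and is exactly the paper's: Proposition~\ref{stab}(iii) characterizes stability by a condition that is manifestly open, and the triangle-inequality step you give is the right way to conclude.

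The density argument, however, has a genuine gap: the claim you flag as ``the main obstacle'' --- that big classes are stable, equivalently that $\B_+$ is locally constant near a big class --- is simply false, and no amount of care with the uncountability hypothesis will rescue it. Already for divisors ($k=n-1$, where $P_{n-1}(X)=\Amp(X)$ by Lemma~\ref{divis}): take $D$ big and nef but not ample (e.g.\ the pullback of an ample divisor under a nontrivial birational morphism). Then $\B_-(D)=\emptyset$ since $D$ is nef, while $\B_+(D)\neq\emptyset$ since $D$ is not ample, so $D$ is big but not stable. More structurally, $\B_+$ jumps along the boundary of $P_k(X)$, which lies in the interior of ${\rm Big}_k(X)$ whenever the inclusion $P_k(X)\subseteq{\rm Big}_k(X)$ of Lemma~\ref{coni2}(i) is strict; Corollary~\ref{riduz+} only gives one-sided semicontinuity ($\B_+(\alpha-\beta)\subseteq\B_+(\alpha)$ for small $\beta$), not local constancy. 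Consequently your step ``$\alpha+\tfrac1m[A^{n-k}]$ is big, hence stable'' does not hold, and the whole case analysis (non-pseudoeffective / big / boundary-pseudoeffective) does not close up.

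The paper's route is shorter and avoids this trap entirely: Proposition~\ref{121} says directly that for any $\alpha\in N_k(X)$ and any sufficiently small ample complete-intersection class $[A_1\cdots A_{n-k}]$, one has $\B_-(\alpha-[A_1\cdots A_{n-k}])=\B_+(\alpha-[A_1\cdots A_{n-k}])$, i.e.\ $\alpha-[A_1\cdots A_{n-k}]$ is stable. Letting the $A_i$ shrink to zero exhibits stable classes arbitrarily close to $\alpha$, so density holds with no case distinction and no appeal to bigness at all. You mention Proposition~\ref{121} in passing as a possible ingredient; it is in fact the whole argument.
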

\begin{proof}
It is open by Proposition \ref{stab} and dense by Proposition \ref{121}.
\end{proof}

\end{document}